\newcommand*{\faktor}[2]{
  \raisebox{0.5\height}{\ensuremath{#1}}
  \mkern-5mu\diagup\mkern-4mu
  \raisebox{-0.5\height}{\ensuremath{#2}}
} 
\newcommand\restr[2]{{
\left.
\kern-
\nulldelimiterspace 
#1 
\right|_{#2} 
}}
\tikzset{>=stealth}
\newtheorem{theorem}{Theorem}[section]
\newtheorem{proposition}[theorem]{Proposition}
\newtheorem{corollary}[theorem]{Corollary}
\newtheorem{question}[theorem]{Question}
\theoremstyle{definition}
\newtheorem{remark}[theorem]{Remark}
\newtheorem{definition}[theorem]{Definition}
\newtheorem{notation}[theorem]{Notation}
\newtheorem{example}[theorem]{Example}
\theoremstyle{definition}
\begin{document}
\title{Positive model theory of interpretations}
\author{Kristóf Kanalas}
\date{}

\maketitle
\begin{abstract}
    We prove analogues of model theory results for $\mathcal{C}\to \mathcal{D}$ coherent functors, including variants of the omitting types theorem and some results on ultraproduct constructions. We introduce a distributive lattice valued invariant of $\mathcal{C}\to \mathbf{Set}$ coherent functors that vanishes precisely on positively closed models, then we study its functorial properties.
\end{abstract}

\tableofcontents

\section{Introduction}

One of the many arguments towards categorical logic is that the syntactic treatment of interpretations between theories yields complicated definitions which are hard to handle. On that basis it seems to be unclear why we would need to define maps between interpretations, how such a category could be proved to have 2-categorical limits and colimits (e.g.~how should we glue theories together along interpretations) (see \cite{adjoint}), or how natural model theoretic constructions (e.g.~the formation of type spaces) are 2-functorial wrt.~interpretations and their homomorphisms (discussed in Section 4). To treat such questions we identify theories with algebraic structures (coherent categories) and their interpretations with the structure-preserving maps (coherent functors), following a simple idea explained in the Overview (based on \cite{makkai}).

The second (related) argument is that this identification tells us that interpretations are the same as models in some exotic categories (in small coherent categories instead of $\mathbf{Set}$), hence we can hope to prove model theoretic results for morphisms of theories. The third argument is that category theory has a considerable toolkit which can be applied once we translate logic problems to questions about coherent categories and coherent functors. There are many more. (Namely we can go further from the coherent category associated to the theory, and arrive at first to its pretopos completion (\cite{makkai}), then to the classifying topos (\cite{makkai},\cite{elephant}). Moreover from any of these syntactic presentations we can go to the semantics level: the accessible category of models (\cite{rosicky}). Each stage has its own theory and the transfer between these stages is also well-understood.) In this paper we will elaborate on the first three arguments.

\section{Overview}
We start with a quick summary of some chapters from the book \cite{makkai}, which will motivate our setting. In particular we describe how the study of $\mathcal{C}\to \mathbf{Set}$ coherent functors (a special case of our investigations) equals many-sorted positive model theory, discussed e.g.~in \cite{poizat}, \cite{poizatyaacov}, \cite{posmodth} and \cite{haykazyan}.

Given an $L$-structure $M$, we can think of it as a collection of sets (the interpretation of $L$-formulas) and a collection of set-functions (the ones whose graph is a definable set). That is, an $L$-structure is a subcategory of $\mathbf{Set}$, "parametrized" with formulas and function-like formulas between them. 

However, if we take a homomorphism $M\to M'$ it will not preserve these definable sets in general, only those which correspond to positive existential formulas (the ones built up from atomic formulas, $\bot $, $\vee $, $\top $, $\wedge $ and $\exists $). This suggests that when we treat $M$ as a subcategory of $\mathbf{Set}$ (the functorial image of something), we should only care about definable sets and functions given by positive existential formulas.

When we look at this reduced subcategory we can see not only whether a positive existential formula is valid in $M$, i.e.~whether it equals $[x_1=x_1\wedge \dots \wedge x_k=x_k]^M$, but we can also see containment between various definable sets. (In other terms: we can see the validity of one implication but not the validity of a formula with nested implications.) This motivates the definition of the coherent fragment $L_{\omega \omega }^g\subseteq L_{\omega \omega }$, which consists of formulas of the form $\forall \vec{x}(\varphi \to \psi )$ where $\varphi $ and $\psi $ are positive existential. We use the notation $\varphi \Rightarrow \psi $ and call it a coherent sequent.

This fragment is sufficiently rich: if we are given an arbitrary $L_{\omega \omega }$ theory, we can replace it with an $L_{\omega \omega }^g$-theory, whose models are the same, but the homomorphisms are the maps which were previously the elementary embeddings. (But after all this is what usually happens: either we take a simple (coherent) theory and then looking at homomorphisms or we take a theory with high complexity but then we only care about the elementary maps. This fragment captures both cases.) The construction is called Morleyization: we introduce a relation symbol for each formula $R_{\varphi }\subseteq X^n$ and then write things like $R_{\varphi }\wedge R_{\neg \varphi }\Rightarrow \bot $ and $\top \Rightarrow R_{\varphi }\vee R_{\neg \varphi }$, $R_{\exists x \varphi }\Leftrightarrow \exists x R_{\varphi }(x,\dots )$, etc. Using $\forall = \neg \exists \neg $ we ensured that for each $\varphi $: $\varphi ^M=R_{\varphi }^M$, and then we just add $\top \Rightarrow R_{\psi }(\vec{x})$ whenever $\psi $ is a formula in $T$.

The above mentioned parametrization is the so-called syntactic category: given a coherent theory $T\subseteq L_{\omega \omega }^g$, we construct its syntactic category $\mathcal{C}_T$ as follows: objects are positive existential $L$-formulas up to (allowed) change of free variables, and a morphism $[\varphi (\vec{x})]\to [\psi (\vec{y})]$ is given by a positive existential formula $\mu (\vec{x},\vec{y})$ such that in every model $M$ of $T$: $\mu ^M\subseteq \varphi ^M\times \psi ^M$ is the graph of a function (this has a syntactic characterization as well). We take a morphism to be such a $\mu $ up to $T$-provable equivalence.

This category will have some exactness properties, coming from the syntax, e.g.~we have finite products: The product of $[\varphi (x)]$ and $[\psi (x')]$ is $[\varphi (x)\wedge \psi (x')]$ where we change $x'$ to $x''$ if it was the same variable as $x$. (To verify this we would have to define the induced map with a formula, then prove its functionality, the commutativity of the triangles and uniqueness of the induced map: i.e.~we would have to prove that certain formulas are $T$-provably equivalent.) Note that the interpretation of this conjunct in $M$ is the direct product of $\varphi ^M$ and $\psi ^M$, so $M$, as a functor preserves finite products. If we list all these properties we get: finite limits, pullback-stable finite unions and pullback-stable image factorizations. Categories having all of these are called coherent.

Then we get what we want: $M:\mathcal{C}_T\to \mathbf{Set}$ coherent functors are the same as $T$-models in $\mathbf{Set}$ and $M\Rightarrow N$ natural transformations are the same as the model homomorphisms. Actually the situation is even better: given any coherent category $\mathcal{D}$ we can define what is an $L$-structure in $\mathcal{D}$: we choose an underlying object $A$, for function symbols we choose arrows $f^M:A^n\to A$ and for relation symbols subobjects $P^M\subseteq A^m$. It turns out that inductively we can define $\varphi ^M$ for any positive existential $\varphi $: for $t_1=t_2$ it is the equalizer of the interpretation of the terms, for $P(t_1,\dots t_m)$ it is a pullback, $\wedge $ and $\vee $ is given by intersection and union of subobjects, finally $\exists x\varphi $ is given by the image factorization of $\varphi ^M\hookrightarrow A^{n+1}\xrightarrow{\pi } A^n$. Homomorphisms are expressed with commutative squares. (E.g.~if $T$ is the theory of groups then these are group objects and their homomorphisms in $\mathcal{D}$.) It is still true that $\mathcal{D}$-models of $T$ correspond to $\mathcal{C}_T\to \mathcal{D}$ coherent functors and homomorphisms to natural transformations between those.

This gives two natural ideas: all results in (positive) model theory are just statements about $\mathcal{C}\to \mathbf{Set}$ coherent functors. So if we could reprove these theorems in the categorical language, using just a few properties of $\mathbf{Set}$, then we would get a theorem about $\mathcal{C}\to \mathcal{D}$ coherent functors for any coherent category $\mathcal{D}$ (at least if it satisfies a few extra categorical properties). The first possibility is to take $\mathcal{D}$ to be something interesting e.g.~compact Hausdorff spaces, and then study group objects/ ZFC objects, etc.~there, using model theory. The second possibility is to note that syntactic categories are also coherent and hence study $T\to T'$ interpretations = $\mathcal{C}_T\to \mathcal{C}_{T'}$ coherent functors this way. 

Actually all (small) coherent categories are syntactic categories of (small, many-sorted, coherent) theories, as the whole process has an inverse: given a coherent category $\mathcal{C}$ we can form its canonical signature $L_{\mathcal{C}}$ whose sorts are the objects in $\mathcal{C}$ and it only has unary function symbols; the arrows of $\mathcal{C}$. (Alternatively we could also take an extended canonical signature, where we introduce relation symbols for subobjects.) Then the theory is just writing down everything we see (=identities, commutative triangles, finite limits, unions, effective epimorphisms (images)), as if we were in $\mathbf{Set}$. E.g.~if we see that the function symbol $\widetilde{f}:\widetilde{X}\to \widetilde{Y}$ is coming from a monomorphism $f:X\to Y$ of $\mathcal{C}$, then we write $\widetilde{f}(x)=\widetilde{f}(x')\Rightarrow x=x'$ as an axiom. The trick is, that when we interpret the two formulas in a coherent category we do it by universal constructions (equalizers in this case), and hence this sequence is valid in a model $M$ iff $\widetilde{f}^M$ is a monomorphism (i.e.~satisfies the defining universal property: injectivity makes no sense in general). So $Th(\mathcal{C})$-models in $\mathcal{D}$ are the same as $\mathcal{C}\to \mathcal{D}$ coherent functors (and homomorphisms are the same as natural transformations). From this $\mathcal{C}_{Th(\mathcal{C})}\simeq \mathcal{C}$ also follows and we can summarize these facts as "coherent categories, coherent functors and natural transformations are the same as many-sorted coherent theories, their interpretations (=models), and the homomorphisms between interpretations".

Finally note that the following are synonyms: coherent theory, $\aleph _0$-geomet\-ric theory, h-inductive theory (what we called a coherent sequent is what is called a simple h-inductive sentence in \cite{posmodth}. An h-inductive theory is a set of finite conjunctions of simple h-inductive sentences, which is the same as a set of simple h-inductive sentences).

We mentioned one possible use, namely that we can generalize model theory statements to make theorems about $\mathcal{C}\to \mathcal{D}$ coherent functors. The first three sections are mainly dealing with this approach: 
\begin{itemize}
    \item We recall the definition of elementary natural transformations (Lecture 18X, Definition 2.~of \cite{lurie}), that correspond to those homomorphisms which reflect the validity of positive existential formulas. (Definition \ref{elementarynattr}) In \cite{posmodth} these are called immersions.
    \item Given a coherent functor $M:\mathcal{C}\to \mathcal{D}$ we study when a collection of subobjects $(Nx\hookrightarrow Mx)_{x\in \mathcal{C}}$ gives rise to an elementary coherent subfunctor $N\Rightarrow M$. (Theorem \ref{TV})
    \item Following \cite{haykazyan} we define positively closed coherent functors (the positive analogue of existentially closed models). (Definition \ref{poscldef})
    \item Still following \cite{haykazyan} we define types (as prime filters over the subobject lattices $Sub_{\mathcal{C}}(x)$), and prove variants of the omitting types theorem (for coherent functors with various codomains). (Theorem \ref{omitting} and \ref{omitting3})
    \item It turns out that $\lambda $-pretoposes have filtered colimits of size $<\lambda $ which commute with finite limits. (Theorem \ref{hasfiltcolim} and \ref{commfinlim})
    \item We repeat (and slightly extend) the definition of a $(\lambda ,\kappa )$-coherent category and a $(\lambda ,\kappa )$-pretopos from \cite{inflog}. These correspond to $L_{\lambda \kappa }^g$ theories. (Definition \ref{klpretop})
    \item We prove that a $(\lambda ,\kappa )$-pretopos $\mathcal{D}$ (satisfying two extra conditions: it is weakly Boolean and $\{0\}\subseteq Sub_{\mathcal{D}}(1)$ is a $\kappa $-prime filter) is a rich-enough structure to perform $<\kappa $ ultraproduct constructions with $\mathcal{C}\to \mathcal{D}$ coherent functors (for any coherent category $\mathcal{C}$). One might read it as "infinite quantifier theories are so weird that the interpretations of finitary theories inside them are closed under ultraproducts". (Theorem \ref{klpretopult})
    \item The Łoś-lemma holds. (Theorem \ref{los})
    \item We define semantically complete coherent categories (any two models satisfy the same coherent sequents). (Definition \ref{semancplte}) (We refer to this property as $M$ and $N$ are elementary equivalent. In \cite{posmodth} the term "$M$ and $N$ are companions" is used.)
    \item $\mathcal{C}$ is semantically complete iff it is weakly Boolean and 2-valued. (Theorem \ref{intsemancplt})
    \item If $\mathcal{C}$ is semantically complete then $\mathbf{Coh}_e(\mathcal{C},\mathbf{Set})$ (i.e.~$\mathcal{C}\to \mathbf{Set}$ coherent functors and elementary natural transformations) has the joint embedding property. (Theorem \ref{semancplthasjep})
\end{itemize}

There is an equally important second possibility: now that we have a clear algebraic understanding of interpretations between theories (coherent functors) and homomorphisms between interpretations (natural transformations) we can make use of the functoriality of certain model-theoretic constructions, and apply the toolkit of category theory:
\begin{itemize}
    \item Given the subobject functor $Sub_{\mathcal{C}}:\mathcal{C}^{op}\to \mathbf{DLat}$ we study its left Kan-extension along the Yoneda-embedding. Its restriction to coherent functors is denoted by $L_{\mathcal{C}}:\mathbf{Coh}(\mathcal{C},\mathbf{Set})\to \mathbf{DLat}$. (Proposition \ref{lcdef})
    \item This construction is also functorial in $\mathcal{C}$, stated precisely in Definition \ref{ldef}.
    \item We give an explicit description for $LM=L_{\mathcal{C}}M$. $M$ is positively closed iff $LM=2$. (Proposition \ref{lmcomp} and \ref{poscl2})
    \item We give several equivalent conditions for a model $M$ to be positively closed. (Theorem \ref{posclequivalent}) ($1\Leftrightarrow 4\Leftrightarrow 5$ can be found in \cite{posmodth}, for the one-sorted case, in the language of model theory.)
    \item The above left Kan-extension always preserves finite products of coherent functors. (Theorem \ref{lanfinpr})
\end{itemize}


\section{Tarski-Vaught test}

\begin{notation}
$\mathcal{C}$, $\mathcal{D}$, $\mathcal{E}$ are coherent categories, $M$, $M'$, $N$ are coherent functors, $\eta$, $\nu$, $\alpha$ are natural transformations.
\end{notation}

If $\mathcal{C}$ is a pretopos and $\mathcal{E}=\mathbf{Set}$ then the following is Definition 2.~in Lecture 18X of \cite{lurie}:

\begin{definition}
\label{elementarynattr}
We say that a natural transformation
\[
\adjustbox{scale=1.1}{
\begin{tikzcd}
	{\mathcal{C}} && {\mathcal{E}}
	\arrow[""{name=0, anchor=center, inner sep=0}, "N", curve={height=-12pt}, from=1-1, to=1-3]
	\arrow[""{name=1, anchor=center, inner sep=0}, "{M}"', curve={height=12pt}, from=1-1, to=1-3]
	\arrow["\eta", shorten <=3pt, shorten >=3pt, Rightarrow, from=0, to=1]
\end{tikzcd}
}
\]
is elementary, if for each $i:a\hookrightarrow x$ monomorphism in $\mathcal{C}$

\[
\adjustbox{scale=0.95}{
\begin{tikzcd}
	Na && Nx \\
	\\
	{Ma} && {Mx}
	\arrow["Ni", hook, from=1-1, to=1-3]
	\arrow["{\eta _x}", from=1-3, to=3-3]
	\arrow["{Mi}", hook, from=3-1, to=3-3]
	\arrow["{\eta _a}"', from=1-1, to=3-1]
\end{tikzcd}
}
\]
is a pullback. The category of $\mathcal{C}\to \mathcal{E}$ coherent functors and elementary natural transformations is denoted by $\mathbf{Coh}_e(\mathcal{C},\mathcal{E})$.
\end{definition}

\begin{proposition}
If $\eta :N\Rightarrow M$ is an elementary natural transformation, then for each $x\in \mathcal{C}$ the component $\eta _x$ is monic.
\label{mono}
\end{proposition}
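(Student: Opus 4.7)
The plan is to apply the elementary condition not to an arbitrary monomorphism related to $x$, but to the diagonal $\Delta_x : x \to x\times x$. This is a split monomorphism (either projection splits it), so in particular it is monic, and the elementary hypothesis applies.

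Since coherent functors preserve finite limits, both $N$ and $M$ preserve binary products and diagonals: there are canonical isomorphisms $N(x\times x)\cong Nx\times Nx$ and $M(x\times x)\cong Mx\times Mx$ identifying $N\Delta_x$ with $\Delta_{Nx}$ and $M\Delta_x$ with $\Delta_{Mx}$. Naturality of $\eta$ with respect to the two projections $x\times x\rightrightarrows x$ forces $\eta_{x\times x}$ to coincide, under these identifications, with $\eta_x\times \eta_x$. Thus the elementary pullback square produced by Definition~\ref{elementarynattr} reads
\[
\begin{tikzcd}
Nx \arrow[r, "\Delta_{Nx}"] \arrow[d, "\eta_x"'] & Nx\times Nx \arrow[d, "\eta_x\times \eta_x"] \\
Mx \arrow[r, "\Delta_{Mx}"'] & Mx\times Mx
\end{tikzcd}
\]
and is a pullback in $\mathcal{E}$.

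Finally, I would observe the general categorical fact that a morphism $f:A\to B$ in a category with finite products is monic if and only if the square with $\Delta_A$ on top, $\Delta_B$ on the bottom and $f,\ f\times f$ on the sides is a pullback: a pair $(u,v)$ from a test object with $fu=fv$ induces $(u,v):T\to A\times A$ over $\Delta_B$, which factors through $\Delta_A$ iff $u=v$. Applied to $f=\eta_x$, this gives that $\eta_x$ is a monomorphism.

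There is no real obstacle; the only care needed is the bookkeeping verifying that under the product-preservation isomorphisms the component $\eta_{x\times x}$ really becomes $\eta_x\times \eta_x$, which is a straightforward naturality argument using the product projections.
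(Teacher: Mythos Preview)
Your proof is correct and takes essentially the same approach as the paper: both apply the elementary hypothesis to the diagonal $\Delta_x:x\hookrightarrow x\times x$ and use the resulting pullback square (together with the identification $\eta_{x\times x}\cong \eta_x\times\eta_x$) to conclude that $\eta_x$ is monic. The paper phrases the final step concretely by chasing a pair $r,s:A\to Nx$ with $\eta_x r=\eta_x s$ through the pullback, which is exactly the content of the general fact you invoke.
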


\begin{proof}
Let $r, s :A\to N(x)$ be maps for which $\eta _x r =\eta _x s$. We have an induced dashed arrow in

\[
\adjustbox{scale=0.9}{
\begin{tikzcd}
	A \\
	& {M(x)} && {M(x)\times M(x)} \\
	\\
	& {N(x)} && {N(x)\times N(x)}
	\arrow["{M(\Delta )}", from=2-2, to=2-4]
	\arrow["{\eta _{x\times x}}", from=2-4, to=4-4]
	\arrow["{\eta _x}"', from=2-2, to=4-2]
	\arrow["{N(\Delta )}"', from=4-2, to=4-4]
	\arrow["{\langle \alpha ,\beta \rangle}", curve={height=-6pt}, from=1-1, to=2-4]
	\arrow["{\eta _x\circ \alpha}"', curve={height=6pt}, from=1-1, to=4-2]
	\arrow[dashed, from=1-1, to=2-2]
\end{tikzcd}
}
\]
from which $r =s $ follows.
\end{proof}

\begin{definition}
Let $M:\mathcal{C}\to \mathcal{E}$ be a coherent functor and $(\alpha_x: N(x) \hookrightarrow M(x))_{x\in \mathcal{C}}$ be a collection of subobjects. We say that it satisfies the Tarski-Vaught criterion, if for each $i:\varphi \to x\times y$ monomorphism of $\mathcal{C}$ the map $f\circ j$ in
\[
\adjustbox{scale=0.9}{
\begin{tikzcd}
	{M(\varphi )} && {M(x)\times M(y)} \\
	\\
	s && {M(x)\times N(y)} && {N(y)} \\
	&&& p \\
	q && {N(x)\times N(y)}
	\arrow["{M(i)}"{description}, hook, from=1-1, to=1-3]
	\arrow["{1\times \alpha _y}"{description}, hook, from=3-3, to=1-3]
	\arrow[hook, from=3-1, to=1-1]
	\arrow[hook, from=3-1, to=3-3]
	\arrow[from=3-3, to=3-5]
	\arrow["{\alpha _x\times 1}"{description}, hook, from=5-3, to=3-3]
	\arrow[hook, from=5-1, to=5-3]
	\arrow["j"{description}, hook, from=5-1, to=3-1]
	\arrow["\ulcorner"{anchor=center, pos=0.125, rotate=-90}, draw=none, from=1-3, to=3-1]
	\arrow["\ulcorner"{anchor=center, pos=0.125, rotate=-90}, shift right=2, draw=none, from=3-3, to=5-1]
	\arrow["f"{description, pos=0.3}, two heads, from=3-1, to=4-4]
	\arrow[hook, from=4-4, to=3-5]
	\arrow[two heads, from=5-1, to=4-4]
\end{tikzcd}
}
\]
is surjective.
\label{TVdef}
\end{definition}

\begin{proposition}
If $(\alpha_x: N(x) \hookrightarrow M(x))_{x\in \mathcal{C}}$ satisfies the Tarski-Vaught criterion then $\alpha : N\Rightarrow M$ is a subfunctor. 
\end{proposition}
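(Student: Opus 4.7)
The aim would be to promote the collection of subobjects $(\alpha_x : N(x) \hookrightarrow M(x))$ into a functor $N : \mathcal{C} \to \mathcal{E}$ in such a way that the $\alpha_x$ form the components of a natural transformation. Because every $\alpha_x$ is mono, a candidate $N(f)$ is uniquely determined once it exists, so the content of the proof is to show that for each $f : x \to y$ in $\mathcal{C}$, the composite $M(f) \circ \alpha_x : N(x) \to M(y)$ factors through $\alpha_y$. My plan is to extract this from the Tarski-Vaught criterion, applied to the \emph{reverse} graph $i := \langle f, 1_x \rangle : x \hookrightarrow y \times x$ with $\varphi := x$; the naive forward graph $\Gamma_f : x \hookrightarrow x \times y$ gives only the wrong statement that every $N(y)$-element in the image of $M(f)$ already comes from $N(x)$, so the first thing to get right is the correct instance of TV.

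With $i = \langle f, 1_x \rangle$ one has $M(i) = \langle M(f), 1_{M(x)} \rangle$. The first pullback in the TV diagram, along $1 \times \alpha_x : M(y) \times N(x) \hookrightarrow M(y) \times M(x)$, will cut out the subobject of $M(x)$ whose second coordinate lies in $N(x)$; since that coordinate is literally the identity on $M(x)$, this forces $s = \alpha_x : N(x) \hookrightarrow M(x)$, and the projection $s \to N(x)$ appearing in the diagram is the identity. Hence the image $p$ equals $N(x)$. The second pullback, along $\alpha_y \times 1 : N(y) \times N(x) \hookrightarrow M(y) \times N(x)$, then produces $q = (M(f) \circ \alpha_x)^{-1}(\alpha_y)$, the largest subobject of $N(x)$ on which $M(f) \circ \alpha_x$ factors through $\alpha_y$. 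The Tarski-Vaught hypothesis now says that $q \twoheadrightarrow p = N(x)$ is a cover, but this map is also the mono $q \hookrightarrow N(x)$, and in a regular (hence coherent) category a mono that is a cover is an isomorphism. Thus $q = N(x)$, and one defines $N(f)$ to be the resulting factorization through $\alpha_y$.

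Functoriality of $N$ and naturality of $\alpha$ will then be immediate: for any composable $f, g$ both $\alpha_z \circ (N(g) \circ N(f))$ and $\alpha_z \circ N(g \circ f)$ equal $M(g \circ f) \circ \alpha_x$, and one cancels the mono $\alpha_z$ on the left (and similarly for identities). The main obstacle I expect is precisely the first step, namely recognising that the correct instance of TV uses the reverse graph so that the cover produced by the criterion is literally the inclusion $q \hookrightarrow N(x)$; once this is spotted, everything else is a bookkeeping check on the two pullbacks in the TV diagram together with the single categorical fact that ``mono $+$ cover $=$ iso'' in a regular category.
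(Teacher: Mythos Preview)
Your argument is correct and rests on the same instance of the Tarski--Vaught criterion as the paper's proof: both apply it to the (reverse) graph of $f$, so that the ``existential'' variable is the codomain $y$ and the ``parameter'' variable is the domain $x$. The difference is in how the conclusion is extracted. The paper works in the functional-relation formalism of Makkai--Reyes: it shows that the pulled-back relation $q \hookrightarrow N(x) \times N(y)$ is the graph of a function $N(x) \to N(y)$ by separately verifying single-valuedness (via a sizeable diagram chase) and totality (via the TV hypothesis), and then reads off $N(f)$ from this graph. You instead observe that the composite $q \to s \to p \cong N(x)$ is already a monomorphism---because $\alpha_y$ is mono, any preimage of it is mono---so the TV surjectivity makes it an isomorphism and the factorization $N(f)$ falls out directly. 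This collapses the paper's single-valuedness step to a one-line remark; what each approach buys is the same factorization, but yours is shorter and avoids the large diagram.

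One small wording slip: you write ``in a regular (hence coherent) category''. The implication runs the other way---coherent categories are regular, not conversely---but since the fact you use (mono $+$ effective epi $=$ iso) holds already in regular categories and $\mathcal{E}$ is coherent, the argument is unaffected.
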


\begin{proof}
We use the terminology of \cite{makkai}. We have the following diagram:

\[\begin{tikzcd}
	{M(x)} && {M(x)\times M(y)} && {M(y)} \\
	\\
	s && {N(x)\times M(y)} \\
	\\
	q && {N(x)\times N(y)}
	\arrow["{M(f)\circ p_1}", shift left=2, from=1-3, to=1-5]
	\arrow["{p_2}"', shift right=2, from=1-3, to=1-5]
	\arrow["{\langle 1,M(f)\rangle}", hook, from=1-1, to=1-3]
	\arrow["{\alpha _x \times 1}", hook, from=3-3, to=1-3]
	\arrow[hook, from=3-1, to=3-3]
	\arrow[hook, from=3-1, to=1-1]
	\arrow["\ulcorner"{anchor=center, pos=0.125, rotate=-90}, draw=none, from=1-3, to=3-1]
	\arrow["{1\times \alpha _y}", hook, from=5-3, to=3-3]
	\arrow[hook, from=5-1, to=5-3]
	\arrow[hook, from=5-1, to=3-1]
	\arrow["\ulcorner"{anchor=center, pos=0.125, rotate=-90}, draw=none, from=3-3, to=5-1]
\end{tikzcd}\]

It would be enough to see that the subobject $q\hookrightarrow N(x)\times N(y)$ is functional (from $N(x)$ to $N(y)$), as in this case we had a commutative square
\[\begin{tikzcd}
	{graph(M(f))} && {M(x)\times M(y)} \\
	\\
	{graph (f')} && {N(x)\times N(y)}
	\arrow[hook, from=1-1, to=1-3]
	\arrow["{\alpha _x \times \alpha _y}"', from=3-3, to=1-3]
	\arrow[hook, from=3-1, to=3-3]
	\arrow[from=3-1, to=1-1]
\end{tikzcd}\]
from which the commutativity of
\[\begin{tikzcd}
	{M(x)} && {M(y)} \\
	\\
	{N(x)} && {N(y)}
	\arrow["{M(f)}", from=1-1, to=1-3]
	\arrow["{f'}", from=3-1, to=3-3]
	\arrow["{\alpha _y}"', hook, from=3-3, to=1-3]
	\arrow["{\alpha _x}"', hook, from=3-1, to=1-1]
\end{tikzcd}\]
follows.

$s\hookrightarrow N(x)\times M(y)$ is functional, as it is the equalizer of $M(f)\circ \alpha _x \circ p_1$ and $p_2: N(x)\times M(y)\to M(y)$. We need to conclude the functionality of $q$, which is, that $Th(\mathcal{E})$ proves $q(x,y)\wedge q(x,y')\Rightarrow y\approx y'$ and $\top \Rightarrow \exists y q(x,y)$. With diagrams it is verified as follows:

\[
\adjustbox{width=\textwidth}{
\begin{tikzcd}
	&&&& s &&& {N(x)\times M(y)} && {M(y)} \\
	&& {e_1} &&& {N(x)\times M(y)\times M(y)} &&& {N(x)\times M(y)} && {M(y)} \\
	e &&& {e_2} &&& s \\
	&&&& q &&& {N(x)\times N(y)} && {N(y)} \\
	&& {e_1'} &&& {N(x)\times N(y)\times N(y)} &&& {N(x)\times N(y)} && {N(y)} \\
	{e'} &&& {e_2'} &&& q
	\arrow[hook, from=3-1, to=2-3]
	\arrow[from=2-3, to=1-5]
	\arrow[hook, from=1-5, to=1-8]
	\arrow[color={rgb,255:red,92;green,92;blue,214}, from=2-6, to=1-8]
	\arrow[hook, from=2-3, to=2-6]
	\arrow[hook, from=3-4, to=2-6]
	\arrow[hook, from=3-1, to=3-4]
	\arrow[color={rgb,255:red,92;green,92;blue,214}, from=2-6, to=2-9]
	\arrow[hook, from=3-7, to=2-9]
	\arrow[from=3-4, to=3-7]
	\arrow[dashed, hook, from=6-1, to=3-1]
	\arrow[hook, from=6-1, to=6-4]
	\arrow[hook, from=6-1, to=5-3]
	\arrow[hook, from=5-3, to=5-6]
	\arrow[hook, from=6-4, to=5-6]
	\arrow[dashed, hook, from=5-3, to=2-3]
	\arrow[dashed, hook, from=6-4, to=3-4]
	\arrow[hook, from=5-6, to=2-6]
	\arrow[from=5-3, to=4-5]
	\arrow[hook, from=4-5, to=1-5]
	\arrow[hook, from=4-5, to=4-8]
	\arrow[color={rgb,255:red,92;green,214;blue,92}, from=5-6, to=4-8]
	\arrow[hook, from=4-8, to=1-8]
	\arrow[hook, from=6-7, to=3-7]
	\arrow[from=6-4, to=6-7]
	\arrow[color={rgb,255:red,92;green,214;blue,92}, from=5-6, to=5-9]
	\arrow[hook, from=6-7, to=5-9]
	\arrow[hook, from=5-9, to=2-9]
	\arrow[color={rgb,255:red,92;green,214;blue,92}, from=4-8, to=4-10]
	\arrow[color={rgb,255:red,92;green,92;blue,214}, from=1-8, to=1-10]
	\arrow[hook, from=4-10, to=1-10]
	\arrow[color={rgb,255:red,92;green,214;blue,92}, from=5-9, to=5-11]
	\arrow[color={rgb,255:red,92;green,92;blue,214}, from=2-9, to=2-11]
	\arrow[hook, from=5-11, to=2-11]
	\arrow[color={rgb,255:red,214;green,92;blue,92}, from=3-1, to=2-6]
	\arrow[color={rgb,255:red,232;green,74;blue,190}, from=6-1, to=5-6]
\end{tikzcd}
}
\]
As the red map equalizes the blue arrows, the pink map equalizes the green ones. The second axiom follows, as in

\[
\adjustbox{scale=0.9}{
\begin{tikzcd}
	s && {N(x)\times M(y)} \\
	&&& {N(x)} \\
	q && {N(x)\times N(y)}
	\arrow[hook, from=3-1, to=1-1]
	\arrow[from=1-1, to=1-3]
	\arrow[hook, from=3-3, to=1-3]
	\arrow[from=3-1, to=3-3]
	\arrow[from=1-3, to=2-4]
	\arrow[from=3-3, to=2-4]
	\arrow[two heads, from=1-1, to=2-4]
	\arrow[two heads, from=3-1, to=2-4]
\end{tikzcd}
}
\]
if $s\to N(x)$ is surjective then $q\to N(x)$ is also, by our assumption.
\end{proof}

\begin{proposition}
If the subfunctor $\alpha : N\Rightarrow M$ satisfies the Tarski-Vaught test then it is elementary.
\end{proposition}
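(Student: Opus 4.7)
The plan is to deduce elementarity for a given mono $i : a \hookrightarrow x$ by a single application of the Tarski--Vaught criterion to a cleverly chosen mono built from $i$. The natural candidate is the graph $g := \langle 1_a, i \rangle : a \hookrightarrow a \times x$, which is a split mono with left inverse the first projection. I would apply Definition \ref{TVdef} to $g$, taking the roles of ``$x$'' and ``$y$'' there to be $a$ and $x$ respectively.

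The next step is to unwind the TV diagram for this choice. Since $Mg = \langle 1_{Ma}, Mi \rangle$ is the graph of $Mi$, pulling it back along $1_{Ma} \times \alpha_x$ identifies $s$ with the pullback $P$ of $Mi : Ma \to Mx$ and $\alpha_x : Nx \to Mx$, in such a way that the projection $s \hookrightarrow Ma \times Nx \to Nx$ is the mono $P \hookrightarrow Nx$ representing the subobject $\alpha_x^{-1}(Mi(Ma))$; in particular this projection is already mono, so the image factorization is trivial and $p = \alpha_x^{-1}(Mi(Ma))$. The second pullback $q$ along $\alpha_a \times 1_{Nx}$, after substituting the naturality equation $Mi \circ \alpha_a = \alpha_x \circ Ni$ and using that $\alpha_x$ is mono (since $\alpha$ is a subfunctor), collapses to the graph of $Ni : Na \to Nx$. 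Hence $q \cong Na$ and the image of $q \to Nx$ is $Ni(Na)$.

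The Tarski--Vaught hypothesis then reads $Ni(Na) = \alpha_x^{-1}(Mi(Ma))$ as subobjects of $Nx$; together with the fact that $Ni$ is mono (coherent functors preserve monos), this forces the comparison map $Na \to \alpha_x^{-1}(Mi(Ma))$ in the naturality square to be an isomorphism, which is exactly elementarity at $i$. The main obstacle is the second of the two pullback identifications above: showing $q \cong \mathrm{graph}(Ni)$ requires using the naturality of $\alpha$ and the monicity of $\alpha_x$ simultaneously, and one has to track carefully that the composite projection of $q$ onto $N(y) = Nx$ indeed picks out $Ni(Na)$ rather than some larger subobject. Once that chase is done, the TV criterion delivers the conclusion immediately.
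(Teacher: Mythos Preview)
Your proof is correct and follows essentially the same route as the paper: both apply the Tarski--Vaught criterion to the graph $\langle 1_a, i\rangle : a \hookrightarrow a\times x$, identify $s$ with the pullback $\alpha_x^{-1}(Ma)$ (so that $s\to p$ is already an isomorphism), identify $q$ with the graph of $Ni$ via naturality and monicity of $\alpha_x$, and conclude that the comparison $Na \to \alpha_x^{-1}(Ma)$ is a monic effective epi, hence an isomorphism. One small quibble: at this point in the argument $N$ is not yet known to be coherent, so your parenthetical ``coherent functors preserve monos'' is the wrong justification for $Ni$ being monic --- the correct reason (which you essentially have anyway) is that $\alpha_x \circ Ni = Mi \circ \alpha_a$ is a composite of monos.
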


\begin{proof}
The map $s\twoheadrightarrow p$ is the pullback of $1$, hence it is an effective epi (iso). By assumption this implies that the dashed arrow is eff.~epi, and since it is also monic, it is an isomorphism.
\[
\adjustbox{scale=0.8}{
\begin{tikzcd}
	{M(a)} && {M(a)\times M(x)} \\
	&&&& {M(a)} && {M(y)} \\
	s && {M(a)\times N(x)} \\
	&&&& p && {N(x)} \\
	{N(a)} && {N(a)\times N(x)} \\
	&&&& {N(a)} && {}
	\arrow["{\langle 1, M(f)\rangle}", hook, from=1-1, to=1-3]
	\arrow[hook, from=5-3, to=3-3]
	\arrow[hook, from=3-3, to=1-3]
	\arrow[hook, from=3-1, to=1-1]
	\arrow[hook, from=3-1, to=3-3]
	\arrow[hook, from=5-1, to=3-1]
	\arrow["{\langle 1, N(f)\rangle}", hook, from=5-1, to=5-3]
	\arrow["\ulcorner"{anchor=center, pos=0.125, rotate=-90}, draw=none, from=1-3, to=3-1]
	\arrow["\ulcorner"{anchor=center, pos=0.125, rotate=-90}, draw=none, from=3-3, to=5-1]
	\arrow["1"{description, pos=0.7}, two heads, from=1-1, to=2-5]
	\arrow[from=1-3, to=2-7]
	\arrow[hook, from=2-5, to=2-7]
	\arrow[hook, from=4-7, to=2-7]
	\arrow[hook, from=4-5, to=2-5]
	\arrow[hook, from=4-5, to=4-7]
	\arrow["\ulcorner"{anchor=center, pos=0.125, rotate=-90}, draw=none, from=2-7, to=4-5]
	\arrow[from=3-3, to=4-7]
	\arrow[from=5-3, to=4-7]
	\arrow[shift left=1, curve={height=6pt}, two heads, from=3-1, to=4-5]
	\arrow["\ulcorner"{anchor=center, pos=0.125, rotate=-135}, draw=none, from=2-7, to=3-3]
	\arrow[curve={height=-18pt}, hook, from=6-5, to=2-5]
	\arrow[hook, from=6-5, to=4-7]
	\arrow["1"{description, pos=0.7}, two heads, from=5-1, to=6-5]
	\arrow[dashed, hook, from=6-5, to=4-5]
	\arrow[curve={height=-6pt}, dashed, two heads, from=5-1, to=4-5]
\end{tikzcd}
}
\qedhere
\]
\end{proof}

\begin{definition}
The collection $(\alpha _x:N(x)\hookrightarrow M(x))_{x\in \mathcal{C}}$ preserves finite products if $\alpha _1: N(1)\hookrightarrow M(1)=1_{\mathcal{D}}$ is an isomorphism and there is an isomorphism $\varphi $ making

\[\begin{tikzcd}
	{M(x)\times M(y)} && {M(x\times y)} \\
	\\
	{N(x)\times N(y)} && {N(x\times y)}
	\arrow["{\alpha _{x\times y}}"', hook, from=3-3, to=1-3]
	\arrow["\varphi", dashed, from=3-3, to=3-1]
	\arrow["\cong"', from=1-3, to=1-1]
	\arrow["{\alpha _x \times \alpha _y}", hook, from=3-1, to=1-1]
\end{tikzcd}\]
commute.
\label{presprod}
\end{definition}

\begin{theorem}
Let $M:\mathcal{C}\to \mathcal{D}$ be a coherent functor and let $(\alpha _x:N(x)\hookrightarrow M(x))_{x\in \mathcal{C}}$ be a collection of subobjects. The following are equivalent:
\begin{enumerate}
    \item $(\alpha _x)_{x\in \mathcal{C}}$ satisfies the Tarski-Vaught criterion and preserves finite products,
    \item it extends (uniquely) to a coherent elementary subfunctor $\alpha :N \Rightarrow M$.
\end{enumerate}
\label{TV}
\end{theorem}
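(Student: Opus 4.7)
The plan is to leverage the two preceding propositions, which already show that the Tarski--Vaught criterion forces $\alpha$ to be an elementary natural transformation between functors, with each arrow $Nf$ determined by the functionality of $M(\mathrm{graph}(f)) \cap (N(x) \times N(y))$ as a subobject of $N(x) \times N(y)$. All that remains for $(1) \Rightarrow (2)$ is to verify that the functor $N$ is coherent, and that the extension is unique. Uniqueness is immediate: given any extension satisfying elementarity, applying elementarity to the mono $\mathrm{graph}(f) \hookrightarrow x \times y$ identifies $N(\mathrm{graph}(f))$ with $M(\mathrm{graph}(f)) \cap (N(x) \times N(y))$, and functionality then singles out $Nf$ as the unique arrow with this graph.

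For coherence, finite products are preserved by hypothesis. For an equalizer $e : E \hookrightarrow x$ of $f, g : x \to y$, elementarity identifies $N(E)$ with $M(E) \times_{M(x)} N(x)$, which is the equalizer of $Nf$ and $Ng$ since $\alpha_y$ is monic. For subobjects $a, b \hookrightarrow x$ in $\mathcal{C}$, elementarity, coherence of $M$ and pullback-stable unions in $\mathcal{D}$ yield
\[
N(a \cup b) = M(a \cup b) \cap N(x) = (M(a) \cap N(x)) \cup (M(b) \cap N(x)) = N(a) \cup N(b),
\]
and the empty union is handled analogously. For an image factorization $f = m \circ e$ in $\mathcal{C}$, $Nm$ remains mono as a pullback of $Mm$ along $\alpha_y$, while $Ne$ is epi by a direct application of the TV criterion to $\mathrm{graph}(e) \hookrightarrow x \times \mathrm{im}(f)$: since $Me$ is epi, the object $s$ of the TV diagram projects onto $N(\mathrm{im}(f))$, so TV forces $q = \mathrm{graph}(Ne)$ to do the same.

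For $(2) \Rightarrow (1)$, finite product preservation follows from coherence of $N$ together with the fact that $\alpha_1 : N(1) \to M(1) = 1_{\mathcal{D}}$ is an arrow between terminal objects, hence an isomorphism. To recover the TV criterion, fix a mono $\varphi \hookrightarrow x \times y$ in $\mathcal{C}$. By elementarity the object $q$ in Definition \ref{TVdef} equals $N(\varphi)$ as a subobject of $N(x) \times N(y)$; by elementarity combined with pullback-stable image factorizations in $\mathcal{D}$, the object $p$ coincides with $N$ applied to the image of $\varphi$ in $y$ along the projection $x \times y \to y$. Since $N$ is coherent, the effective epi part of the image factorization of $N(\varphi) \to N(y)$ then provides the required surjection $q \twoheadrightarrow p$.

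The main obstacle is the identification of $p$ in the Tarski--Vaught diagram with $N$ applied to the projected image of $\varphi$: one must combine elementarity (to pull the image factorization of $M(\varphi)$ in $M(y)$ back to $N(y)$) with pullback stability of image factorizations in $\mathcal{D}$. Everything else is routine diagram chasing, using that pullbacks in a coherent category preserve monos, effective epis and finite unions.
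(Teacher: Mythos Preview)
Your proposal is correct and follows essentially the same route as the paper: for $(1)\Rightarrow(2)$ you invoke the two preceding propositions for functoriality and elementarity, then verify coherence of $N$ piece by piece (products via the hypothesis, equalizers via elementarity, unions via pullback-stability, effective epis via the Tarski--Vaught criterion applied to a graph); for $(2)\Rightarrow(1)$ you identify $q$ with $N\varphi$ and $p$ with $N(\exists_\pi\varphi)$ and conclude by coherence of $N$. The only place where the paper is slightly more careful is in checking that the product-preservation hypothesis on the collection $(\alpha_x)$ really makes $N(p_1),N(p_2)$ into product projections (your ``by hypothesis'' elides this), but this is a one-line diagram chase.
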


\begin{proof}
$1\to 2$. It remains to prove that $N$ is coherent. As 
\[
\adjustbox{scale=0.8}{
\begin{tikzcd}
	{M(x)} && {M(x)\times M(y)} && {M(x\times y)} \\
	\\
	{N(x)} && {N(x)\times N(y)} && {N(x\times y)}
	\arrow["{\alpha _{x\times y}}"', hook, from=3-5, to=1-5]
	\arrow["\varphi", dashed, from=3-5, to=3-3]
	\arrow[from=3-3, to=3-1]
	\arrow["{\alpha _x}", hook, from=3-1, to=1-1]
	\arrow["\cong"', from=1-5, to=1-3]
	\arrow["{\alpha _x \times \alpha _y}", hook, from=3-3, to=1-3]
	\arrow[from=1-3, to=1-1]
\end{tikzcd}
}
\]
commutes and $N(p_1)$ is uniquely determined, the preservation of finite products follows. As $\alpha $ is elementary $N(i)$ in
\[
\adjustbox{scale=0.85}{
\begin{tikzcd}
	{M(a)} && {M(x)} && {M(y)} \\
	\\
	{N(a)} && {N(x)} && {N(y)}
	\arrow["{M(i)}", hook, from=1-1, to=1-3]
	\arrow["{M(f)}", shift left=1, from=1-3, to=1-5]
	\arrow["{M(g)}"', shift right=1, from=1-3, to=1-5]
	\arrow["{\alpha _a}", hook, from=3-1, to=1-1]
	\arrow["{\alpha _x}", hook, from=3-3, to=1-3]
	\arrow["{N(i)}", hook, from=3-1, to=3-3]
	\arrow["\ulcorner"{anchor=center, pos=0.125, rotate=-90}, draw=none, from=1-3, to=3-1]
	\arrow["{N(f)}", shift left=1, from=3-3, to=3-5]
	\arrow["{N(g)}"', shift right=1, from=3-3, to=3-5]
	\arrow["{\alpha _y}"', hook, from=3-5, to=1-5]
\end{tikzcd}
}
\]
is an equalizer hence $N$ preserves finite limits.

The preservation of unions is immediate, as unions are pullback-stable in $\mathcal{D}$. Finally let $f: x\to y$ be an effective epimorphism in $\mathcal{C}$. In

\[
\adjustbox{scale=0.85}{
\begin{tikzcd}
	{M(x)} && {M(x)\times M(y)} && {M(y)} \\
	\\
	s && {M(x)\times N(y)} && {N(y)} \\
	\\
	{N(x)} && {N(x)\times N(y)}
	\arrow["{\langle 1,M(f)\rangle }", hook, from=1-1, to=1-3]
	\arrow["{1\times \alpha _y}"', hook, from=3-3, to=1-3]
	\arrow[hook, from=3-1, to=3-3]
	\arrow[hook, from=3-1, to=1-1]
	\arrow["{\alpha _x\times 1}"', hook, from=5-3, to=3-3]
	\arrow["{\langle 1, N(f) \rangle}"', hook, from=5-1, to=5-3]
	\arrow[hook, from=5-1, to=3-1]
	\arrow[from=1-3, to=1-5]
	\arrow["{\alpha _y}"', hook, from=3-5, to=1-5]
	\arrow[from=3-3, to=3-5]
	\arrow["\ulcorner"{anchor=center, pos=0.125, rotate=-90}, draw=none, from=1-3, to=3-1]
	\arrow["\ulcorner"{anchor=center, pos=0.125, rotate=-90}, draw=none, from=1-5, to=3-3]
	\arrow["\ulcorner"{anchor=center, pos=0.125, rotate=-90}, draw=none, from=3-3, to=5-1]
\end{tikzcd}
}
\]
every square is a pullback. By pullback-stability $s\to N(y)$ is an effective epi, then by assumption $N(f):N(x)\to N(y)$ is also.

$2\to 1$. By the universal property of $M(x)\times M(y)$, the inner square in

\[
\adjustbox{scale=0.9}{
\begin{tikzcd}
	& {M(x)\times M(y)} \\
	{M(x)} && {M(x\times y)} && {M(y)} \\
	\\
	& {N(x)\times N(y)} \\
	{N(x)} && {N(x\times y)} && {N(y)}
	\arrow["{\alpha _x \times \alpha _y}"{description}, from=4-2, to=1-2]
	\arrow["\cong"{description}, dashed, from=2-3, to=1-2]
	\arrow["{\alpha _{x\times y}}"{description}, from=5-3, to=2-3]
	\arrow["\cong"{description}, dashed, from=5-3, to=4-2]
	\arrow[curve={height=6pt}, from=1-2, to=2-1]
	\arrow[curve={height=-6pt}, from=1-2, to=2-5]
	\arrow["{M(p_2)}"{description}, from=2-3, to=2-5]
	\arrow["{M(p_1)}"{description, pos=0.7}, from=2-3, to=2-1]
	\arrow["{N(p_2)}"{description}, from=5-3, to=5-5]
	\arrow["{N(p_1)}"{description}, from=5-3, to=5-1]
	\arrow[curve={height=6pt}, from=4-2, to=5-1]
	\arrow[curve={height=-6pt}, from=4-2, to=5-5]
	\arrow["{\alpha _y}"{description}, hook, from=5-5, to=2-5]
	\arrow["{\alpha _x}"{description}, from=5-1, to=2-1]
\end{tikzcd}
}
\]
commutes, hence $(\alpha _x)_{x\in \mathcal{C}}$ preserves products in the sense of Definition \ref{presprod}.

For the Tarski-Vaught criterion observe
\[
\adjustbox{scale=0.9}{
\begin{tikzcd}
	{M(\varphi)} && {M(x)\times M(y)} \\
	&&& {M(\exists x\varphi )} && {M(y)} \\
	s && {M(x)\times N(y)} \\
	&&& {N(\exists x\varphi )} && {N(y)} \\
	{N(\varphi )} && {N(x)\times N(y)}
	\arrow["{M(i)}"{description}, hook, from=1-1, to=1-3]
	\arrow[from=1-3, to=2-6]
	\arrow["{1\times \alpha _y}", hook, from=3-3, to=1-3]
	\arrow[hook, from=3-1, to=3-3]
	\arrow[hook, from=3-1, to=1-1]
	\arrow[from=3-3, to=4-6]
	\arrow["{\alpha _y}"{description}, hook, from=4-6, to=2-6]
	\arrow[two heads, from=1-1, to=2-4]
	\arrow["{M(j)}"{description}, hook, from=2-4, to=2-6]
	\arrow[from=4-4, to=2-4]
	\arrow["{N(j)}"{description}, hook, from=4-4, to=4-6]
	\arrow[two heads, from=3-1, to=4-4]
	\arrow["{\alpha _x \times 1}", hook, from=5-3, to=3-3]
	\arrow["{N(i)}"{description}, hook, from=5-1, to=5-3]
	\arrow[hook, from=5-1, to=3-1]
	\arrow[two heads, from=5-1, to=4-4]
\end{tikzcd}
}
\]
By the pullback-stability of effective epi-mono factorizations $s \twoheadrightarrow N(\exists x \varphi ) \xhookrightarrow{N(j)} N(y)$ is a factorization for $s\to M(x)\times N(y)\to N(y)$.
\end{proof}

The extra assumption on "preserving finite products" is needed, e.g.~the collection $(N(x)=\emptyset \hookrightarrow M(x))_x$ trivially satisfies the Tarski-Vaught condition. The reason for this is that we only considered subobjects $\varphi \hookrightarrow x\times y$ secretly assuming that if $y=y_0\times \dots y_k$ then $N$ will preserve this product. We can modify the definition to get:

\begin{definition}
Let $\mathcal{C}, \mathcal{D}$ be coherent categories, $M:\mathcal{C}\to \mathcal{D}$ be a coherent functor and $(Nx\xhookrightarrow{\alpha _x} Mx)_{x\in \mathcal{C}}$ be a collection of subobjects. We say that it satisfies the Tarski-Vaught test in the stronger sense if for any subobject $\varphi \hookrightarrow x_0\times \dots \times x_{n-1}\times y_0\times \dots y_{m-1}$ we have that $f\circ j$ in

\[
\adjustbox{width=\textwidth}{
\begin{tikzcd}
	M\varphi && {Mx_0\times \dots \times Mx_{n-1}\times My_0\times \dots My_{m-1}} \\
	\\
	s && {Mx_0\times \dots \times Mx_{n-1}\times Ny_0\times \dots Ny_{m-1}} && {Ny_0\times \dots Ny_{m-1}} \\
	&&&& \bullet \\
	q && {Nx_0\times \dots \times Nx_{n-1}\times Ny_0\times \dots Ny_{m-1}}
	\arrow[hook, from=1-1, to=1-3]
	\arrow["{id\times \alpha _{y_0}\times \dots \times \alpha _{y_{m-1}}}"{description}, hook, from=3-3, to=1-3]
	\arrow["{\alpha _{x_0}\times \dots \alpha _{x_{n-1}}\times id}"{description}, hook, from=5-3, to=3-3]
	\arrow[from=3-3, to=3-5]
	\arrow[hook, from=3-1, to=1-1]
	\arrow[hook, from=3-1, to=3-3]
	\arrow[hook, from=5-1, to=5-3]
	\arrow["j"{description}, hook, from=5-1, to=3-1]
	\arrow["\lrcorner"{anchor=center, pos=0.125, rotate=-90}, draw=none, from=3-3, to=5-1]
	\arrow["\lrcorner"{anchor=center, pos=0.125, rotate=-90}, draw=none, from=1-3, to=3-1]
	\arrow["f"{description, pos=0.8}, two heads, from=3-1, to=4-5]
	\arrow[hook, from=4-5, to=3-5]
\end{tikzcd}
}
\]
is effective epi (for $n>0, m\geq 0$, where we define $\prod _{\emptyset }Ny_i \to  \prod _{\emptyset }My_i $ to be $1\to 1$).
\end{definition}

\begin{theorem}
Let $M:\mathcal{C}\to \mathcal{D}$ be a coherent functor and $(\alpha _x:Nx\hookrightarrow Mx)_{x\in \mathcal{C}}$ be a collection of subobjects. The following are equivalent:
\begin{enumerate}
    \item It satisfies the Tarski-Vaught test in the stronger sense.
    \item It extends uniquely to an elementary coherent subfunctor $\alpha :N\Rightarrow M$.
\end{enumerate}
\end{theorem}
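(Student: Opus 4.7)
The plan is to reduce $1 \Rightarrow 2$ to Theorem \ref{TV} by extracting both the original Tarski-Vaught criterion (Definition \ref{TVdef}) and the preservation of finite products (Definition \ref{presprod}) from the stronger test, and to handle $2 \Rightarrow 1$ by a direct diagram chase using elementarity, product preservation and the pullback-stability of image factorizations.

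For $1 \Rightarrow 2$, observe first that the stronger test specialized to $n=m=1$ is exactly the original Tarski-Vaught criterion, so that hypothesis of Theorem \ref{TV} is automatic. For the preservation of finite products I would plug in two specific inputs. Taking $n=1,\ m=0,\ x_0 = 1,\ \varphi = 1$ collapses every object of the diagram: $s = 1$, $q = N1$, $\bullet = 1$, and the effective-epi condition forces $N1 \twoheadrightarrow 1$, hence $N1 \cong 1$. To obtain $N(x\times y) \cong Nx \times Ny$ as subobjects of $Mx \times My$ I would apply the test twice with $\varphi$ the graph of $\mathrm{id}_{x \times y}$: once as $\varphi \hookrightarrow (x\times y)\times x\times y$ (so $n=1,\ m=2$), which makes $s \cong Nx\times Ny$ and yields $Nx \times Ny \subseteq N(x\times y)$; and once as $\varphi \hookrightarrow x\times y\times(x\times y)$ (so $n=2,\ m=1$), which makes $s \cong N(x\times y)$ and yields the reverse inclusion. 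Both hypotheses of Theorem \ref{TV} then hold, and uniqueness of the extension is automatic because $Nf$ is forced by $\alpha_y \circ Nf = Mf \circ \alpha_x$ together with the monicity of $\alpha_y$.

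For $2 \Rightarrow 1$, let $\alpha : N \Rightarrow M$ be elementary and coherent and fix a subobject $\varphi \hookrightarrow \prod x_i \times \prod y_j$. Since $N$ preserves finite products and $\alpha$ is natural, $\alpha_{\prod x_i \times \prod y_j}$ factors as $\prod \alpha_{x_i} \times \prod \alpha_{y_j}$, and elementarity applied to the monomorphism $M\varphi \hookrightarrow M(\prod x_i \times \prod y_j)$ identifies the pullback $q$ with $N\varphi$. Factoring $\varphi$ in $\mathcal{C}$ as $\varphi \twoheadrightarrow \exists\vec{x}\,\varphi \hookrightarrow \prod y_j$, the pullback-stability of image factorizations in $\mathcal{D}$ together with elementarity applied to $M(\exists\vec{x}\,\varphi) \hookrightarrow M(\prod y_j)$ identifies the image $\bullet$ of $s \to \prod Ny_j$ with $N(\exists\vec{x}\,\varphi)$. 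Then $q \to \bullet$ is $N$ applied to the effective epi $\varphi \twoheadrightarrow \exists\vec{x}\,\varphi$, which is again effective epi since $N$ is coherent. The main obstacle I anticipate is the bookkeeping in $1 \Rightarrow 2$: checking that the two orientations of the graph of $\mathrm{id}_{x\times y}$ really give opposite inclusions, and that the resulting isomorphism $N(x\times y) \cong Nx\times Ny$ fits into the commutative square of Definition \ref{presprod}.
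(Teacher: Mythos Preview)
Your proposal is correct and follows essentially the same route as the paper. For $1\Rightarrow 2$ the paper also reduces to Theorem \ref{TV} by extracting product preservation from the stronger test, using $1\hookrightarrow 1$ for the terminal object and the diagonal $x\times y\hookrightarrow (x\times y)\times x\times y$ (plus ``the symmetric argument'') for binary products; the paper is silent about the $n=m=1$ specialization giving the original criterion, but this is implicit. For $2\Rightarrow 1$ the paper simply says ``clear from Theorem \ref{TV}, as the component at $x_0\times\dots\times x_{n-1}$ must be $\alpha_{x_0}\times\dots\times\alpha_{x_{n-1}}$'', which is exactly the argument you spell out (promote the $2\to 1$ half of Theorem \ref{TV} to finite products using elementarity and pullback-stability of images). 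The bookkeeping you flag as the main obstacle is genuine but routine: the two inclusions exhibit $N(x\times y)$ and $Nx\times Ny$ as the same subobject of $Mx\times My\cong M(x\times y)$, so the square of Definition \ref{presprod} commutes by construction.
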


\begin{proof}
$2\to 1$ is clear from Theorem $\ref{TV}$, as the component at $x_0\times \dots \times x_{n-1}$ must be $\alpha _{x_0}\times \dots \times \alpha _{x_{n-1}}$. Assuming $1$ we shall prove that the collection preserves products.

Terminal object: applying the condition to $1\to 1$ we get that in
\[\begin{tikzcd}
	{M(1)} && {M(1)\times 1} && 1 \\
	{N(1)} && {N(1)\times 1}
	\arrow[from=1-1, to=1-3]
	\arrow[from=1-3, to=1-5]
	\arrow["{\alpha _1}"', hook, from=2-3, to=1-3]
	\arrow["{\alpha _1}"', hook, from=2-1, to=1-1]
	\arrow[from=2-1, to=2-3]
\end{tikzcd}\]
$\alpha _1$ is effective epi, hence iso.

Binary products: taking $x\times y \xhookrightarrow{ \Delta } (x\times y)\times x\times y$ yields that $q\to Nx\times Ny$ in 
\[\begin{tikzcd}
	{M(x\times y)} && {M(x\times y)\times Mx\times My} && {Mx\times My} \\
	\\
	{Nx\times Ny} && {M(x\times y)\times Nx\times Ny} && {Nx\times Ny} \\
	\\
	q && {N(x\times y)\times Nx\times Ny}
	\arrow[from=1-1, to=1-3]
	\arrow[from=1-3, to=1-5]
	\arrow["\cong"{description}, curve={height=-12pt}, from=1-1, to=1-5]
	\arrow["{id\times \alpha _x\times \alpha _y}"{description}, hook, from=3-3, to=1-3]
	\arrow["{\alpha _x\times \alpha _y}"{description}, hook, from=3-5, to=1-5]
	\arrow[from=3-3, to=3-5]
	\arrow["\lrcorner"{anchor=center, pos=0.125, rotate=-90}, draw=none, from=1-5, to=3-3]
	\arrow[from=3-1, to=3-3]
	\arrow["{\alpha _x\times \alpha _y}"{description}, hook, from=3-1, to=1-1]
	\arrow["\lrcorner"{anchor=center, pos=0.125, rotate=-90}, draw=none, from=1-3, to=3-1]
	\arrow["{\alpha _{x\times y}\times id}"{description}, hook, from=5-3, to=3-3]
	\arrow["\cong"{description}, hook, from=5-1, to=3-1]
	\arrow[from=5-1, to=5-3]
\end{tikzcd}\]
is iso, and therefore $Nx\times Ny\leq N(x\times y)$:

\[\begin{tikzcd}
	& {M(x\times y)} && {M(x\times y)\times Mx\times My} \\
	\\
	& {N(x\times y)} && {N(x\times y)\times Mx\times My} \\
	{Nx\times Ny} && q & {N(x\times y)\times Nx \times Ny}
	\arrow["{\alpha _{x\times y}\times id}", hook, from=3-4, to=1-4]
	\arrow["{\alpha _{x\times y}}"', hook, from=3-2, to=1-2]
	\arrow[from=1-2, to=1-4]
	\arrow[from=3-2, to=3-4]
	\arrow["\lrcorner"{anchor=center, pos=0.125, rotate=-90}, draw=none, from=1-4, to=3-2]
	\arrow["{\alpha _x\times \alpha _y}", hook, from=4-1, to=1-2]
	\arrow[dashed, from=4-1, to=3-2]
	\arrow[from=4-1, to=4-3]
	\arrow[from=4-3, to=4-4]
	\arrow[from=4-4, to=3-4]
\end{tikzcd}\]
The symmetric argument completes the proof.
\end{proof}

In model theory one does not specify subsets of each definable set to describe a substructure, instead a single subset of the underlying set is used. As our setting is equivalent to the classical setup we should be able to do the same, via identifying the sorts among general objects (i.e.~the objects $[x\approx x]$ if $\mathcal{C}$ is a syntactic category). This tool will be used later as well.

\begin{definition}
Let $\mathcal{C}$ be a coherent category. A filtration of $\mathcal{C}$ is a set $\mathbb{S}\subseteq Ob(\mathcal{C})$ together with a specified monomorphism $\iota _x: x\hookrightarrow s_{x,1}\times \dots s_{x,n_x}$ (where $s_i\in \mathbb{S}$) for every object $x$ of $\mathcal{C}$, such that given any monomorphism $a\xhookrightarrow{j}s_1\times \dots s_n$ we have a commutative triangle

\[\begin{tikzcd}
	a &&& {s_1\times \dots \times s_n} \\
	\\
	{a'}
	\arrow["j", hook, from=1-1, to=1-4]
	\arrow["\cong"', from=1-1, to=3-1]
	\arrow["{\iota _{a'}}"', hook, from=3-1, to=1-4]
\end{tikzcd}\]
(More precisely: $\{s_1,\dots s_n\}=\{s_{a',1},\dots s_{a',n_{a'}}\}$ and the triangle is a square with the induced isomorphism on the right side.) Here we allow $n_x=0$, i.e.~to choose $x\xhookrightarrow{\iota _x} 1$.
\end{definition}

\begin{remark}
In other terms for each object $a$ we choose finitely many objects $s_1,\dots s_n\in \mathbb{S}$ and maps $\iota _{a,k}:a\to s_k$ satisfying the above conditions, that do not depend on which product object $s_1\times \dots \times s_n$ we use.
\end{remark}

\begin{proposition}
Every coherent category admits a filtration.
\end{proposition}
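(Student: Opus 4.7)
The plan is to take $\mathbb{S} = \mathrm{Ob}(\mathcal{C})$, so every object of $\mathcal{C}$ is automatically available as a sort. Then for each $x \in \mathrm{Ob}(\mathcal{C})$, the most direct assignment is $\iota_x = \mathrm{id}_x : x \hookrightarrow x$, taking $n_x = 1$ and $s_{x,1} = x$. This is certainly a monomorphism into the one-factor product of sorts $(x)$, so the data part of the filtration is in place for every object.

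The substantive step is verifying the coverage condition. Given a monomorphism $j : a \hookrightarrow s_1 \times \dots \times s_n$ with $s_i \in \mathbb{S}$, one tries $a' = a$ (using that $a \in \mathrm{Ob}(\mathcal{C}) = \mathbb{S}$) with the identity as the iso $a \to a'$; the required square identifies $s_1 \times \dots \times s_n$ with the singleton product determined by $\iota_a$ via the canonical iso coming from the universal property of products in $\mathcal{C}$. This directly handles the case where $j$ is (isomorphic to) the identity on a single sort.

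The sort-multiset equality $\{s_1,\dots,s_n\}=\{s_{a',1},\dots,s_{a',n_{a'}}\}$ is where the identity default is too crude: for a proper subobject inclusion into a product of several distinct sorts there is no matching $\iota_{a'}$ among the identities. The fix I would implement is to refine the filtration via the axiom of choice, indexing over iso classes of pairs (subobject, ambient product of sorts): for each such iso class one picks a distinguished representative $a^{*}$ in the source iso class $[a]$ and sets $\iota_{a^{*}}$ to be the chosen mono, while keeping $\iota_x=\mathrm{id}_x$ for every $x$ not used as such a representative. The main obstacle is the consistency of these assignments, which requires that each iso class $[a]$ admit enough representatives; in a coherent category one can always manufacture more isomorphic copies of an object (e.g.\ by $a\cong a\times 1\cong a\times 1\times 1\cong\dots$), so the choice function can be made injective on the index set and no object is forced to carry two distinct $\iota$'s. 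With such a choice, the filtration condition holds by construction: every mono $j$ into a product of sorts sits in some iso class, whose designated representative provides the required $a'$ and $\iota_{a'}$ making the commutative square close.
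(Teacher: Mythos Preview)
Your approach has a genuine gap at the ``manufacturing copies'' step. The trick $a \cong a \times 1 \cong a \times 1 \times 1 \cong \dots$ does not in general produce new objects: in a skeletal coherent category every isomorphism class is a singleton, so $a \times 1$ is literally $a$ again and you get no additional representatives at all. Even in non-skeletal categories the sequence may stabilise or repeat, and in any case yields at most countably many objects, while the collection of subobject-classes of monos $a \hookrightarrow s_1 \times \dots \times s_n$ (ranging over all finite tuples from $\mathbb{S} = \mathrm{Ob}(\mathcal{C})$) can be much larger. Concretely: take a skeletal version of finite sets and let $s$ be the two-element object. There are six distinct two-element subobjects of $s \times s$, each requiring its own $a'$ in the iso-class of $s$; but that class contains only $s$ itself, so no injective choice function exists.

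The paper sidesteps this by first proving that admitting a filtration is invariant under equivalence, and then passing to the syntactic category $\mathcal{C}_T$ of a coherent theory $T$ with $\mathcal{C}_T \simeq \mathcal{C}$. There the sorts are the objects $[x \approx x]$, and for any mono $[\mu(\vec{x},\vec{y})] : [\varphi(\vec{x})] \hookrightarrow [\vec{y} \approx \vec{y}]$ one takes $a' = [\exists \vec{x}\, \mu(\vec{x},\vec{y})]$, whose canonical $\iota_{a'}$ is the inclusion into $[\vec{y} \approx \vec{y}]$. The point is that syntactic categories are naturally far from skeletal: each subobject of a product of sorts has a canonical formula-representative living over exactly those sorts, so the required supply of isomorphic copies is built in. Your idea could be salvaged by first replacing $\mathcal{C}$ with an equivalent category having enough objects in every iso-class (which is essentially what the syntactic presentation achieves), but as written the argument does not go through.
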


\begin{proof}
This property is invariant under equivalence, indeed if $F:\mathcal{C}\to \mathcal{D}$ is an equivalence and $(\mathbb{S},(\iota _x)_x)$ is a filtration on $\mathcal{C}$ then we can take $\mathbb{S}'=F[\mathbb{S}]$ and if $u\in \mathcal{D}$ equals $Fx$ for some $x\in \mathcal{C}$ then take $\iota '_u=F(\iota _x)$ otherwise take $\iota '_u:u\cong F(x)\xrightarrow{F\iota _x} F(s_1)\times \dots \times F(s_n)$ with an arbitrary isomorphism $u\cong Fx$. This gives a filtration on $\mathcal{D}$.

From \cite{makkai} we know that every coherent category is the syntactic category of something. But then we can take $\mathbb{S}$ to be the set $\{ [x=x] : X \text{ is a sort}\}$ and for any $[\varphi (x_1,\dots x_n)]$ we take
\begin{multline*}
    \iota _{[\varphi ]}:[\varphi (x_1,\dots x_n)]\xrightarrow{[\varphi (x_1,\dots x_n)\wedge x_1\approx x_1'\wedge \dots x_n\approx x_n']} [x_1'\approx x_1' \wedge \dots \wedge x_n'\approx x_n']=\\ [x_1'\approx x_1']\times \dots \times [x_n'\approx x_n']
\end{multline*}
(of course $=$ is the induced isomorphism between the two product objects). In particular when $\varphi $ is closed we take $[\varphi]\xrightarrow{[\varphi ]}[\top]$. The condition is satisfied, if $[\varphi (\vec{x})]\xrightarrow{[\mu (\vec{x},\vec{y})] } [\vec{y}\approx\vec{y}]$ is a monomorphism, then we can factor it through $[\exists \vec{x} \mu (\vec{x},\vec{y})]$ which is a canonical subobject of $[\vec{y}\approx \vec{y}]$, as required. (This also works when $\vec{y}$ is the empty sequence, i.e.~when we are given a monomorphism $[\varphi (x)]\xrightarrow{[\mu (x)]} [\top ]$, which is the same as $[\varphi (x)]\xrightarrow{[\varphi (x)]} [\top ]$ as $[\top ]$ is terminal.)
\end{proof}

\begin{proposition}
\label{filtration}
Let $\mathcal{C}$ be a coherent category and $\mathbb{S}\subseteq Ob(\mathcal{C})$, $(\iota _x: x\hookrightarrow s_{x,1}\times \dots \times s_{x,n_x})_{x\in \mathcal{C}}$ be a filtration. Let $N,M:\mathcal{C}\to \mathcal{D}$ be coherent functors and $(\alpha _s: Ns\to Ms)_{s\in \mathbb{S}}$ be a collection of arrows in $\mathcal{D}$. If for any $x\in \mathcal{C}$ the dashed arrow in

\[\begin{tikzcd}
	Nx &&& {Ns_{x,1}\times \dots \times Ns_{x,n_x}} \\
	\\
	Mx &&& {Ms_{x,1}\times \dots \times Ms_{x,n_x}}
	\arrow["{M\iota _x}", hook, from=1-1, to=1-4]
	\arrow["{\alpha _x}"', dashed, from=1-1, to=3-1]
	\arrow["{M\iota _x}"', hook, from=3-1, to=3-4]
	\arrow["{\alpha _{s_{x,1}}\times \dots \times \alpha _{s_{x,n_x}}}", from=1-4, to=3-4]
\end{tikzcd}\]
exists then $\alpha =(\alpha _x)_x$ is a natural transformation. If moreover these squares are pullbacks then $\alpha $ is elementary.
\end{proposition}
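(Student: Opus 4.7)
The plan is to prove naturality first and then derive elementarity from the pullback hypothesis. Throughout, observe that $\alpha _x$ is uniquely determined by its defining square since $M\iota _x$ is monic (coherent functors preserve monomorphisms). For naturality at a morphism $f:x\to y$, I postcompose the desired identity $Mf\circ \alpha _x=\alpha _y\circ Nf$ with the monic $M\iota _y$, apply the defining equation of $\alpha _y$, and use that $M,N$ preserve finite products to decompose $M(\iota _yf)$ and $N(\iota _yf)$ coordinatewise. This reduces naturality to the key case $Mg\circ \alpha _x=\alpha _s\circ Ng$ for morphisms $g:x\to s$ with $s\in \mathbb{S}$.

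For the key case, form the monomorphism $\langle \iota _x,g\rangle :x\hookrightarrow s_{x,1}\times \dots \times s_{x,n_x}\times s$ and invoke the filtration property to obtain $x'\in \mathcal{C}$, an isomorphism $\psi :x\xrightarrow{\sim }x'$, and $\iota _{x'}$ satisfying $\iota _{x'}\psi =\langle \iota _x,g\rangle $ with sort list $(s_{x,1},\dots ,s_{x,n_x},s)$. The core step is to show $M\psi \circ \alpha _x=\alpha _{x'}\circ N\psi $. Setting $\beta :=M\psi ^{-1}\circ \alpha _{x'}\circ N\psi $, I would verify the defining equation $M\iota _x\circ \beta =(\alpha _{s_{x,1}}\times \dots \times \alpha _{s_{x,n_x}})\circ N\iota _x$ as follows: use $\iota _x\psi ^{-1}=p_1\iota _{x'}$ (where $p_1$ projects away the $s$-factor) to rewrite $M\iota _x\circ M\psi ^{-1}=Mp_1\circ M\iota _{x'}$, apply the defining equation for $\alpha _{x'}$, and exploit that $Mp_1$ commutes with $\alpha _{s_{x,1}}\times \dots \times \alpha _{s_{x,n_x}}\times \alpha _s$ by dropping the last factor. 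Uniqueness then forces $\beta =\alpha _x$. Postcomposing $M\psi \alpha _x=\alpha _{x'}N\psi $ with $M\iota _{x'}$ and unfolding via the defining equations for $\alpha _x$ and $\alpha _{x'}$, the first $n_x$ coordinates agree automatically while comparing the last coordinate yields the desired $Mg\alpha _x=\alpha _sNg$.

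For elementarity, assume each filtration square is a pullback. Given a monomorphism $i:a\hookrightarrow b$, apply the filtration to $\iota _b\circ i$ to obtain $a'$, an iso $\psi :a\xrightarrow{\sim }a'$, and a monic $i':=i\psi ^{-1}:a'\hookrightarrow b$ with $\iota _{a'}=\iota _b\circ i'$. In the horizontally stacked diagram with the naturality square for $i'$ on the left and the filtration square for $b$ on the right, the outer rectangle coincides with the filtration square for $a'$; since the outer and right squares are pullbacks by hypothesis, the pasting lemma yields that the naturality square for $i'$ is a pullback. The naturality square for $i$ factors as the naturality square for $\psi $ (a pullback, as any commutative square with horizontal isomorphisms is one) followed by the naturality square for $i'$, hence is itself a pullback by another pasting. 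The main obstacle is the apparent circularity in the key naturality case: reading off $Mg\alpha _x=\alpha _sNg$ directly from the defining equation for $\alpha _{x'}$ reduces to the equivalent statement that $\alpha $ is natural at $\psi $. The resolution is to break symmetry by projecting via $Mp_1$ onto the first $n_x$ coordinates, which eliminates $\alpha _s$ and leaves only data pertaining to $\alpha _x$, so that uniqueness of the dashed arrow at $x$ delivers $M\psi \alpha _x=\alpha _{x'}N\psi $ without assuming the conclusion.
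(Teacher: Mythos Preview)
Your proof is correct and rests on the same core idea as the paper's: package the morphism into a monomorphism with codomain a product of sorts, invoke the filtration property to identify this with some $\iota_{a'}$, and use the hypothesized commuting (resp.\ pullback) squares together with monicity of $M\iota$ (resp.\ the pasting lemma).

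The decomposition differs slightly. For naturality at $f:x\to y$, the paper works directly with the graph $\langle 1,f\rangle :x\hookrightarrow x\times y$ and the product filtration $\iota_x\times\iota_y$, so that one application of the filtration property handles $f$ all at once; projecting to the $x$-factor then forces the transported $\alpha_a$ to coincide with $\alpha_x$, and projecting to the $y$-factor yields naturality. You instead first reduce to the case $g:x\to s$ with $s\in\mathbb{S}$ (by postcomposing with $M\iota_y$ and splitting into coordinates), and then use $\langle\iota_x,g\rangle$, adding a single sort at a time. Your explicit $\beta$-argument (projecting via $p_1$ to recover the defining equation at $x$) makes the ``transport along the iso $\psi$'' step fully visible, which the paper leaves implicit. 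Both routes are of the same length and difficulty; yours is a bit more granular and self-contained, the paper's a bit more compressed. For elementarity the two arguments are essentially identical.
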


\begin{proof}
In
\[
\adjustbox{width=\textwidth}{
\begin{tikzcd}
	Nx && {Nx\times Ny} && {(Ns_1\times \dots \times Ns_n)\times (Ns_1'\times \dots \times Ns_m')} \\
	Na \\
	Ma \\
	Mx && {Mx\times My} && {(Ms_1\times \dots \times Ms_n)\times (Ms_1'\times \dots \times Ms_m')}
	\arrow["{N(\langle 1,f\rangle)}", hook, from=1-1, to=1-3]
	\arrow["{N(\iota _x \times \iota _y)}", hook, from=1-3, to=1-5]
	\arrow["\cong"', from=1-1, to=2-1]
	\arrow["{N\iota _a}"{description, pos=0.7}, hook, from=2-1, to=1-5]
	\arrow["\cong"', from=4-1, to=3-1]
	\arrow["{M\iota _a}"{description, pos=0.7}, hook, from=3-1, to=4-5]
	\arrow["{\prod \alpha _{s_i}\times \prod {\alpha _{s_{j}'}}}", from=1-5, to=4-5]
	\arrow[from=2-1, to=3-1]
	\arrow["{\alpha _x\times \alpha _y}"{description}, from=1-3, to=4-3]
	\arrow["{M(\iota _x \times \iota _y)}"', hook, from=4-3, to=4-5]
	\arrow["{M(\langle 1,f\rangle )}"', hook, from=4-1, to=4-3]
\end{tikzcd}
}
\]
the commutativity of the rectangle and that of the right square make the left square commute (as $M(\iota _x \times \iota _y)$ is mono). Taking the analogous diagram with the monomorphisms $N(f)$ and $M(f)$ (instead of their graphs), and using the pasting law for pullbacks yields the other half of the claim. 
\end{proof}

\begin{theorem}
Let $\mathcal{C}$ be coherent category with filtration $(\mathbb{S},(\iota _x)_x)$ and let $M:\mathcal{C}\to \mathcal{D}$ be a coherent functor. For a collection of subobjects $(\alpha _s: N(s)\hookrightarrow M(s))_{s\in \mathbb{S}}$ the following are equivalent:
\begin{enumerate}
    \item $(\alpha _s)_s$ satisfies the Tarski-Vaught condition, i.e.~given $\varphi \xhookrightarrow{i} (s_1\times \dots \times s_n)\times (s_1'\times \dots \times s_m')$ the map $f\circ j$ in
\[
\adjustbox{scale=0.8}{
\begin{tikzcd}
	{M(\varphi )} && {Ms_1\times \dots \times Ms_n\times Ms_1'\times \dots Ms_m'} \\
	\\
	p && {Ms_1\times \dots \times Ms_n\times Ns_1'\times \dots Ns_m'} && {Ns_1'\times \dots Ns_m'} \\
	&&& \bullet \\
	q && {Ns_1\times \dots \times Ns_n\times Ns_1'\times \dots Ns_m'}
	\arrow["{M(i)}", hook, from=1-1, to=1-3]
	\arrow["{1\times \dots \times 1\times \alpha _{s_1'}\times \dots \times \alpha _{s_m'}}"{description}, hook, from=3-3, to=1-3]
	\arrow[hook, from=3-1, to=1-1]
	\arrow[hook, from=3-1, to=3-3]
	\arrow[from=3-3, to=3-5]
	\arrow["{\alpha _{s_1}\times \dots \times \alpha _{s_n}\times 1\times \dots \times 1}"{description}, hook, from=5-3, to=3-3]
	\arrow[hook, from=5-1, to=5-3]
	\arrow["j"{description}, hook, from=5-1, to=3-1]
	\arrow["\ulcorner"{anchor=center, pos=0.125, rotate=-90}, draw=none, from=1-3, to=3-1]
	\arrow["\ulcorner"{anchor=center, pos=0.125, rotate=-90}, shift right=2, draw=none, from=3-3, to=5-1]
	\arrow["f"{description, pos=0.3}, two heads, from=3-1, to=4-4]
	\arrow[hook, from=4-4, to=3-5]
	\arrow[two heads, from=5-1, to=4-4]
\end{tikzcd}
}
\]
is surjective. (For $n>0, m\geq 0$. Note that if $n=0$ the condition trivially holds.)
    \item There is a unique elementary coherent subfunctor $\alpha :N\Rightarrow M$ with component $\alpha _s$ at $s\in \mathbb{S}$. (Unique as a subobject of $M$ in $\mathcal{D}^{\mathcal{C}}$.)
\end{enumerate}
\label{TV1}
\end{theorem}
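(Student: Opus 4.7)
The plan is to reduce Theorem \ref{TV1} to the preceding Tarski-Vaught theorem (for the strong TV test on all objects) by extending the sort-level family $(\alpha_s)_{s \in \mathbb{S}}$ to a family on all objects via the filtration.

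Direction $2 \Rightarrow 1$ is immediate: the preceding theorem tells us that any elementary coherent subfunctor satisfies the strong TV test for every subobject, and specializing to subobjects whose ambient product consists of sorts from $\mathbb{S}$ yields condition 1.

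For $1 \Rightarrow 2$, I would first define, for each $x \in \mathcal{C}$, the subobject $\alpha_x : N(x) \hookrightarrow M(x)$ as the pullback of $\prod_i N(s_{x,i}) \hookrightarrow \prod_i M(s_{x,i})$ along $M\iota_x : M(x) \hookrightarrow \prod_i M(s_{x,i})$. Choosing $\iota_s = \mathrm{id}_s$ for $s \in \mathbb{S}$ recovers the original data. Then I would verify that this extended family satisfies the strong TV test: given a subobject $\varphi \hookrightarrow x_0 \times \dots \times x_{n-1} \times y_0 \times \dots \times y_{m-1}$ with $n > 0$, compose with $\prod_i M\iota_{x_i} \times \prod_j M\iota_{y_j}$ to view $M\varphi$ as a subobject of the product of sorts $\prod_{i,k} M s_{x_i,k} \times \prod_{j,l} M s_{y_j,l}$, and apply condition 1 to this subobject to get the sort-level TV conclusion. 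The key observation is that products of pullbacks are pullbacks of products, so $\prod_j N(y_j)$ is the pullback of $\prod_{j,l} N(s_{y_j,l}) \hookrightarrow \prod_{j,l} M(s_{y_j,l})$ along $\prod_j M\iota_{y_j}$ (and analogously for $\prod_i N(x_i)$). By pullback pasting, the object $q$ in the strong TV diagram at level $(x_i, y_j)$ then coincides with the corresponding $q_{\mathrm{sort}}$ at the sort level, and the image $\bullet$ coincides with $\bullet_{\mathrm{sort}}$ under the canonical mono, so the sort-level effective-epi condition is exactly the required one. The preceding theorem then delivers the elementary coherent subfunctor $\alpha : N \Rightarrow M$.

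For uniqueness, any elementary coherent subfunctor $\tilde N \Rightarrow M$ with components $\alpha_s$ at sorts must have $\tilde N(x) \hookrightarrow M(x)$ equal to the pullback of $\prod_i N(s_{x,i})$ along $M\iota_x$: coherence forces $\tilde N(\prod_i s_{x,i}) \cong \prod_i \tilde N(s_{x,i}) = \prod_i N(s_{x,i})$, and elementarity applied to $\iota_x$ forces the required pullback square. The main obstacle is the bookkeeping that identifies the strong TV diagram at level $(x_i, y_j)$ with the sort-level diagram as a pullback tower; the product-of-pullbacks-equals-pullback-of-products identification for $\prod_j N(y_j)$ is the essential geometric fact that makes this reduction collapse to the single sort-level check.
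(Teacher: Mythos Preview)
Your proposal is correct and follows essentially the same route as the paper: both define $\alpha_x$ as the pullback of $\prod_i \alpha_{s_{x,i}}$ along $M\iota_x$, reduce the strong Tarski--Vaught condition on arbitrary objects to the sort-level condition via the products-of-pullbacks-are-pullbacks-of-products identification and pullback pasting, then invoke the preceding theorem; the uniqueness argument via elementarity at $\iota_x$ is likewise the same. One small caveat: the filtration does not literally guarantee $\iota_s = \mathrm{id}_s$ for $s\in\mathbb{S}$, only that it is an isomorphism onto a sort, so ``recovers the original data'' should be read up to this canonical iso---but this is harmless and the paper glosses over the same point.
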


\begin{proof}
$2\to 1$ is clear. It is also immediate that if $(\alpha _s)_s$ extends to a coherent elementary subfunctor then this extension is unique, and the components are calculated by the pullbacks 
\[\begin{tikzcd}
	Ma && {Ms_{a,1}\times \dots \times Ms_{a,n_a}} \\
	\\
	Na && {Ns_{a,1}\times \dots \times Ns_{a,n_a}}
	\arrow["{M\iota _a}", hook, from=1-1, to=1-3]
	\arrow["{\alpha _{s_{a,1}}\times \dots \times \alpha _{s_{a,n_a}}}"', hook, from=3-3, to=1-3]
	\arrow["{N\iota _a}"', hook, from=3-1, to=3-3]
	\arrow["{\alpha_a}"', hook, from=3-1, to=1-1]
\end{tikzcd}\]
(when $n_a=0$ this means $\alpha _a:Na=Ma\xhookrightarrow{id}Ma$.)

Now assume 1. Let $(\alpha _a)_{a\in \mathcal{C}}$ be as above. The following diagram shows that our collection satisfies the Tarski-Vaught condition in the stronger sense.
\[
\adjustbox{width=\textwidth}{
\begin{tikzcd}
	{M\varphi } && {Mx_0\times \dots Mx_{n-1}\times My_0\times \dots My_{m-1}} && {M\vec{s_0}\times \dots M\vec{s_{n-1}}\times M\vec{s'_0}\times \dots } \\
	\\
	p && {Mx_0\times \dots Mx_{n-1}\times Ny_0\times \dots Ny_{m-1}} && {M\vec{s_0}\times \dots M\vec{s_{n-1}}\times N\vec{s'_0}\times \dots } \\
	& \bullet && {Ny_0\times \dots} && {N\vec{s_0'}\times \dots} \\
	q && {Nx_0\times \dots Nx_{n-1}\times Ny_0\times  \dots Ny_{m-1}} && {N\vec{s_0}\times \dots N\vec{s_{n-1}}\times N\vec{s_0'}\times \dots}
	\arrow[hook, from=1-1, to=1-3]
	\arrow["{M\iota _{x_0} \times \dots}", hook, from=1-3, to=1-5]
	\arrow[""{name=0, anchor=center, inner sep=0}, hook, from=3-3, to=3-5]
	\arrow["{id\times \alpha _{\vec{s_0'}}\times \dots }"{description}, hook, from=3-5, to=1-5]
	\arrow["{id\times \alpha _{y_0}\times \dots \alpha _{y_{m-1}}}"{description}, hook, from=3-3, to=1-3]
	\arrow[hook, from=3-1, to=3-3]
	\arrow[hook, from=3-1, to=1-1]
	\arrow["\lrcorner"{anchor=center, pos=0.125, rotate=-90}, draw=none, from=1-3, to=3-1]
	\arrow["{\alpha _{x_0}\times \dots \alpha _{x_{n-1}}\times id}"{description, pos=0.3}, hook, from=5-3, to=3-3]
	\arrow[hook, from=5-1, to=3-1]
	\arrow[hook, from=5-1, to=5-3]
	\arrow["{\alpha _{\vec{s_0}}\times \dots \times id}"{description, pos=0.3}, hook, from=5-5, to=3-5]
	\arrow[""{name=1, anchor=center, inner sep=0}, hook, from=5-3, to=5-5]
	\arrow["\lrcorner"{anchor=center, pos=0.125, rotate=-90}, draw=none, from=3-3, to=5-1]
	\arrow[two heads, from=3-1, to=4-2]
	\arrow[from=3-3, to=4-4]
	\arrow[from=3-5, to=4-6]
	\arrow[hook, from=4-4, to=4-6]
	\arrow[hook, from=4-2, to=4-4]
	\arrow["\lrcorner"{anchor=center, pos=0.125, rotate=-90}, draw=none, from=1-5, to=0]
	\arrow["\lrcorner"{anchor=center, pos=0.125, rotate=-90}, draw=none, from=3-5, to=1]
\end{tikzcd}
}
\]
\end{proof}

Now we will discuss the categorical analogue of positively closed models, based on \cite{haykazyan}.

\begin{definition}
\label{poscldef}
Let $\mathcal{C}$ be coherent and let $\mathcal{D}$ be $|\mathcal{C}|^+$-geometric. The coherent functor $N:\mathcal{C}\to \mathcal{D}$ is positively closed if for each subobject $\varphi \hookrightarrow x$ in $\mathcal{C}$ we have $N(x)=N\varphi \cup \bigcup _{\psi \cap \varphi =\emptyset }N\psi $.
\end{definition}

\begin{remark}
If $\mathcal{C}$ has filtration $(\mathbb{S},(\iota _x)_x)$ then it is enough to check the assumption for the subobjects $\varphi \hookrightarrow s_0\times \dots \times s_{n-1}$ ($n\geq 0$). (In the general case just take intersection with $x\hookrightarrow s_{x,0}\times \dots s_{x,n_x-1}$ and with its $N$-image.)
\label{pos1}
\end{remark}

\begin{proposition}
If $N:\mathcal{C}\to \mathcal{D}$ is positively closed then each natural transformation $\alpha :N\Rightarrow M$ is elementary.
\label{allel}
\end{proposition}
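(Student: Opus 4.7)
The plan is to fix a monomorphism $i:a\hookrightarrow x$ in $\mathcal{C}$, form the pullback $P\hookrightarrow Nx$ of $Mi\colon Ma\hookrightarrow Mx$ along $\alpha_x$, and show that the canonical comparison $Na\hookrightarrow P$ (induced by the naturality square of $\alpha$ at $i$) is an isomorphism. Since both sit as subobjects of $Nx$, it suffices to prove $Na=P$ in $\mathrm{Sub}_{\mathcal{D}}(Nx)$.

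The main input will be the positive closedness of $N$ applied to the subobject $a\leq x$: $Nx=Na\cup \bigcup_{\psi\cap a=0}N\psi$, indexed over subobjects $\psi\leq x$ disjoint from $a$. This join has at most $|\mathcal{C}|$ summands, but $\mathcal{D}$ is $|\mathcal{C}|^+$-geometric, so the join exists and is stable under pullback. Intersecting the identity above with $P\leq Nx$ and distributing gives $P=Na\cup\bigcup_{\psi\cap a=0}(P\cap N\psi)$ (using $Na\leq P$), which reduces the task to showing $P\cap N\psi=0$ for every $\psi\leq x$ with $\psi\cap a=0$.

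For such a $\psi$, I would observe that the composite $P\cap N\psi\hookrightarrow Nx\xrightarrow{\alpha_x} Mx$ factors through $Ma\hookrightarrow Mx$ (by the definition of $P$ as a pullback of $Mi$ along $\alpha_x$) and also through $M\psi\hookrightarrow Mx$ (by the naturality of $\alpha$ at $\psi\hookrightarrow x$, which forces $N\psi\hookrightarrow Nx\to Mx$ to factor through $M\psi$). Hence it factors through $Ma\cap M\psi=M(a\cap\psi)=M(0)=0$, using that the coherent functor $M$ preserves finite meets of subobjects and the bottom subobject. This produces an arrow $P\cap N\psi\to 0$, and strictness of the initial object in the coherent category $\mathcal{D}$ then forces $P\cap N\psi\cong 0$, i.e.\ this subobject of $Nx$ is the minimum element of $\mathrm{Sub}_{\mathcal{D}}(Nx)$. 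Thus $P=Na$, so the naturality square is a pullback, as required.

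I do not expect any real obstacle. The two background facts being invoked---that $\mathcal{D}$ has stable joins of size up to $|\mathcal{C}|$ and a strict initial object---both follow from $\mathcal{D}$ being $|\mathcal{C}|^+$-geometric. The only routine care needed is distributivity of intersection over the indexed join when cutting $Nx$ down to $P$, which is again pullback-stability of joins in $\mathcal{D}$.
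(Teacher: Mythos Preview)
Your proof is correct and follows essentially the same approach as the paper's. Both arguments apply positive closedness to write $Nx = Na \cup \bigcup_{\psi \cap a = 0} N\psi$, and both exploit that each $N\psi$ lands in $M\psi$ while $P=\alpha_x^*(Ma)$ lands in $Ma$, so that $P\cap N\psi$ is forced to be $0$; the paper compresses this into the single line $Nx = N\varphi \sqcup \bigcup N\psi \leq \alpha_y^*(M\varphi)\sqcup \bigcup \alpha_y^*(M\psi) \leq Nx$ and deduces $N\varphi = \alpha_y^*(M\varphi)$ from the two disjoint decompositions agreeing, whereas you spell out the intersection-with-$P$ step explicitly.
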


\begin{proof}
We have
\[
N(y)=N\varphi \sqcup \bigcup _{\varphi \cap \psi =\emptyset } N\psi \leq \alpha _y^*(M\varphi ) \sqcup \bigcup _{\psi \cap \varphi =\emptyset }\alpha _y^*(M\psi )\leq N(y)
\]
and both summands on the left are contained in the corresponding one on the right from which $N\varphi =\alpha _y^*(M\varphi )$ follows. 
\end{proof}

\begin{remark}
For $\mathcal{C}\to \mathbf{Set}$ functors the converse is proved as part of Theorem \ref{posclequivalent}.
\end{remark}

The following is the analogue of Proposition 4.1.~in \cite{haykazyan}:

\begin{proposition}
Let $\mathcal{C}$ be a coherent category with filtration $(\mathbb{S},(\iota _x)_x)$, let $\mathcal{D}$ be $|\mathcal{C}|^+$-geometric and let $M:\mathcal{C}\to \mathcal{D}$ be a coherent functor. Assume that we have subobjects $\alpha _s:N(s)\hookrightarrow M(s)$ for $s\in \mathbb{S}$ such that for each $\varphi \hookrightarrow s_0\times \dots \times s_{n-1}\times s_0'\times \dots \times s_{m-1}'$ in

\[
\adjustbox{width=\textwidth}{
\begin{tikzcd}
	M\varphi && {Ms_0\times \dots Ms_{n-1}\times Ms_0'\times \dots Ms_{m-1}'} && {Ms_0'\times \dots Ms_{m-1}'} \\
	&&& {\exists _{\pi }N\varphi} \\
	N\varphi && {Ns_0\times \dots Ns_{n-1}\times Ns_0'\times \dots Ns_{m-1}'} && {\bigcup _{\substack{\chi \hookrightarrow s_0'\times \dots s_{m-1}'\\ \chi \cap \exists _{\pi }\varphi =\emptyset}}M\chi} \\
	&&& {Ns_0'\times \dots Ns'_{m-1}}
	\arrow["{\alpha _{s_0}\times \dots}", hook, from=3-3, to=1-3]
	\arrow[hook, from=1-1, to=1-3]
	\arrow[hook, from=3-1, to=3-3]
	\arrow[hook, from=3-1, to=1-1]
	\arrow["\ulcorner"{anchor=center, pos=0.125, rotate=-90}, draw=none, from=1-3, to=3-1]
	\arrow[from=1-3, to=1-5]
	\arrow[two heads, from=3-1, to=2-4]
	\arrow[hook, from=2-4, to=1-5]
	\arrow[hook, from=3-5, to=1-5]
	\arrow["{\alpha _{s_0'}\times \dots}"{description, pos=0.7}, hook, from=4-4, to=1-5]
\end{tikzcd}
}
\]
$Ns_0'\times \dots Ns'_{m-1}\leq \exists _{\pi }N\varphi  \cup \bigcup _{\chi } M\chi $ (for $n,m\geq 0$).

Then there is a unique positively closed coherent subfunctor $\alpha :N\Rightarrow M$ with component $\alpha _x$ at $x$.
\label{26}
\end{proposition}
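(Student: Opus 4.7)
The plan has two stages: first, verify the Tarski-Vaught condition of Theorem \ref{TV1} to extract a unique elementary coherent subfunctor $\alpha : N \Rightarrow M$ extending $(\alpha_s)_{s\in\mathbb{S}}$; second, read off positive closedness of $N$ from the $n=0$ case of the present hypothesis.

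Fix $\varphi \hookrightarrow s_0\times\dots\times s_{n-1}\times s_0'\times\dots\times s_{m-1}'$ and form the nested pullbacks $q \hookrightarrow p \hookrightarrow M\varphi$ appearing in Theorem \ref{TV1}, together with the image $\bullet \hookrightarrow Ns_0'\times\dots\times Ns_{m-1}'$ of $p$ under projection. By the pullback pasting lemma the outer pullback $q$ coincides with the pullback of $M\varphi$ along $Ns_0\times\dots\times Ns_{m-1}' \hookrightarrow Ms_0\times\dots\times Ms_{m-1}'$, so $q = N\varphi$ in the notation of the statement. Since $q \leq p$ implies $\exists_\pi N\varphi \leq \bullet$ automatically, the Tarski-Vaught condition that $q \twoheadrightarrow \bullet$ be effective epi reduces to the reverse inclusion $\bullet \leq \exists_\pi N\varphi$ inside $Ns_0'\times\dots\times Ns_{m-1}'$.

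For this inclusion, since $M$ preserves finite limits, image factorizations and the initial object,
\[
\bullet \cap M\chi \leq \exists_\pi M\varphi \cap M\chi = M(\exists_\pi \varphi \cap \chi) = M(0) = 0
\]
whenever $\chi \hookrightarrow s_0'\times\dots\times s_{m-1}'$ is disjoint from $\exists_\pi \varphi$. Combining this with the standing hypothesis $Ns_0'\times\dots\times Ns_{m-1}' \leq \exists_\pi N\varphi \cup \bigcup_\chi M\chi$ and the distributivity of binary meet over the (at most $|\mathcal{C}|$-indexed) join available in the $|\mathcal{C}|^+$-geometric category $\mathcal{D}$, intersecting with $\bullet$ collapses the union and yields $\bullet \leq \exists_\pi N\varphi$. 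Theorem \ref{TV1} then supplies the desired elementary coherent subfunctor $\alpha : N \Rightarrow M$.

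For positive closedness, specializing the hypothesis to $n = 0$ trivializes the projection, so $\exists_\pi N\varphi = N\varphi$ and the assumption reads $Ns_0'\times\dots\times Ns_{m-1}' \leq N\varphi \cup \bigcup_\chi M\chi$. Intersecting with $Ns_0'\times\dots\times Ns_{m-1}'$ and using $N\chi = Ns_0'\times\dots\times Ns_{m-1}' \cap M\chi$ produces $Ns_0'\times\dots\times Ns_{m-1}' = N\varphi \cup \bigcup_\chi N\chi$, which is the criterion of Remark \ref{pos1}; hence $N$ is positively closed. The main point to get right is the distributivity/cardinality bookkeeping in the first stage: the index set of $\chi$'s has size at most $|\mathcal{C}|$, so the $|\mathcal{C}|^+$-geometric hypothesis on $\mathcal{D}$ is precisely what justifies the collapse of the geometric join above.
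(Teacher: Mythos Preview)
Your proof is correct and follows essentially the same route as the paper's: both verify the Tarski--Vaught condition of Theorem~\ref{TV1} by showing that the image $\bullet$ of $p$ is contained in $\exists_\pi N\varphi$, using that $\bullet$ is disjoint from each $M\chi$ (since $\bullet \leq M(\exists_\pi\varphi)$) and then distributing the meet over the $|\mathcal{C}|^+$-geometric join; both then read off positive closedness from the degenerate case where no variables are quantified out. Your write-up is arguably a little cleaner than the paper's, which bundles the $M\chi$'s into a single subobject $t$ and passes through an intermediate equality $Ns_0'\times\dots = r\cup t$ that is not really needed; you also correctly invoke the $n=0$ case for positive closedness (the paper writes ``$m=0$'', which appears to be a slip).
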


\begin{proof}
First we verify the Tarski-Vaught condition. We have to prove that the dashed arrow in
\[
\adjustbox{width=\textwidth}{
\begin{tikzcd}
	M\varphi && {Ms_0\times \dots Ms_{n-1}\times Ms_0'\times \dots Ms_{m-1}'} && {Ms_0'\times \dots Ms_{m-1}'} \\
	& pb && {M(\exists _{\pi } \varphi)} && {\bigcup_{ \chi \cap \exists _{\pi} \varphi =\emptyset } M\chi} \\
	p && {Ms_0\times \dots Ms_{n-1}\times Ns_0'\times \dots Ns_{m-1}'} && {Ns_0'\times \dots Ns_{m-1}'} \\
	& pb && r && t \\
	q && {Ns_0\times \dots Ns_{n-1}\times Ns_0'\times \dots Ns_{m-1}'} & {r'}
	\arrow[hook, from=1-1, to=1-3]
	\arrow["{id\times \alpha _{s_0'}\times \dots}"{description}, hook, from=3-3, to=1-3]
	\arrow[hook, from=3-1, to=1-1]
	\arrow[hook, from=3-1, to=3-3]
	\arrow[from=3-3, to=3-5]
	\arrow["{\alpha _{s_0}\times \dots \times id}"{description}, hook, from=5-3, to=3-3]
	\arrow[hook, from=5-1, to=5-3]
	\arrow[hook, from=5-1, to=3-1]
	\arrow[two heads, from=3-1, to=4-4]
	\arrow[hook, from=4-4, to=3-5]
	\arrow[""{name=0, anchor=center, inner sep=0}, "{\alpha _{s_0'}\times \dots}"{description}, hook, from=3-5, to=1-5]
	\arrow[from=1-3, to=1-5]
	\arrow[two heads, from=1-1, to=2-4]
	\arrow[hook, from=2-4, to=1-5]
	\arrow[hook, from=4-4, to=2-4]
	\arrow[dashed, hook, from=5-4, to=4-4]
	\arrow[curve={height=18pt}, two heads, from=5-1, to=5-4]
	\arrow[hook, from=5-4, to=3-5]
	\arrow[hook', from=2-6, to=1-5]
	\arrow[hook', from=4-6, to=3-5]
	\arrow[""{name=1, anchor=center, inner sep=0}, hook', from=4-6, to=2-6]
	\arrow["pb"{description}, draw=none, from=0, to=1]
\end{tikzcd}
}
\]
is an isomorphism. We have $r\cap t=\emptyset $.
By assumption there's $Ns_0'\times \dots Ns_{m-1}' \leq r'\cup \bigcup_{ \chi } M(\chi )$ hence $Ns_0'\times \dots Ns_{m-1}' =r\cup t \leq r'\cup t$. Taking intersection with $r$ we get $r\leq r'$. By Theorem \ref{TV1} this implies that $(\alpha _s)_s$ extends (uniquely) to an elementary coherent subfunctor $\alpha :N\Rightarrow M$. Our assumption with $m=0$ gives that of Remark \ref{pos1} which implies that $N$ is positively closed. 
\end{proof}




\section{Omitting types}

\begin{definition}
$Sub_{\mathcal{C}}:\mathcal{C}\to \mathbf{DLat}^{op}$ is the functor which maps an object $x$ to the distributive lattice $Sub(x)$ and an arrow $f:x\to y$ to the homomorphism $f^*:Sub(y)\to Sub(x)$.
Given a coherent functor $M:\mathcal{C}\to \mathcal{D}$ we have a natural transformation $Sub_M:Sub_{\mathcal{D}}\circ M \Rightarrow Sub_{\mathcal{C}}$ whose components are given by the homomorphisms $M:Sub(x)\to Sub(Mx)$.
\end{definition}

\begin{remark}
To see that $Sub_M$ is indeed a natural transformation we need the commutativity of
\[
\adjustbox{scale=0.9}{
\begin{tikzcd}
	{Sub_{\mathcal{C}}(x)} && {Sub_{\mathcal{C}}(y)} \\
	\\
	{Sub_{\mathcal{D}}(Mx)} && {Sub_{\mathcal{D}}(My)}
	\arrow["{f^*}"', from=1-3, to=1-1]
	\arrow["M", from=1-1, to=3-1]
	\arrow["M", from=1-3, to=3-3]
	\arrow["{(Mf)^*}"', from=3-3, to=3-1]
\end{tikzcd}
}
\]
which follows as $M$ preserves pullbacks.
\label{subm}
\end{remark}

\begin{proposition}
\label{subcomm}
For elementary $\alpha$ the following commutes:
\[
\adjustbox{scale=0.9}{
\begin{tikzcd}
	{\mathcal{C}} \\
	&&&& {\mathbf{DLat}^{op}} \\
	{\mathcal{D}}
	\arrow[""{name=0, anchor=center, inner sep=0}, "N"{description}, curve={height=18pt}, from=1-1, to=3-1]
	\arrow[""{name=1, anchor=center, inner sep=0}, "M"{description}, curve={height=-18pt}, from=1-1, to=3-1]
	\arrow[""{name=2, anchor=center, inner sep=0}, "{Sub_{\mathcal{C}}}", from=1-1, to=2-5]
	\arrow[""{name=3, anchor=center, inner sep=0}, "{Sub_{\mathcal{D}}}"', from=3-1, to=2-5]
	\arrow["\alpha", shorten <=7pt, shorten >=7pt, Rightarrow, from=0, to=1]
	\arrow["{Sub_N}", curve={height=-6pt}, shorten <=5pt, shorten >=5pt, Rightarrow, from=3, to=2]
	\arrow["{Sub_M}"', curve={height=6pt}, shorten <=5pt, shorten >=5pt, Rightarrow, from=3, to=2]
\end{tikzcd}
}
\]
\end{proposition}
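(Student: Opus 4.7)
The plan is to unwind the claimed 2-cell equation into a pointwise statement on subobjects and then read it off the definition of an elementary natural transformation.

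First I would interpret what commutativity of the pasting diagram means. The two natural transformations $Sub_M: Sub_{\mathcal{D}} \circ M \Rightarrow Sub_{\mathcal{C}}$ and $Sub_N: Sub_{\mathcal{D}} \circ N \Rightarrow Sub_{\mathcal{C}}$ were established in Remark~\ref{subm}, and whiskering $\alpha$ with $Sub_{\mathcal{D}}$ yields a third natural transformation $Sub_{\mathcal{D}} \cdot \alpha: Sub_{\mathcal{D}} \circ N \Rightarrow Sub_{\mathcal{D}} \circ M$. The diagram commutes precisely when $Sub_M \circ (Sub_{\mathcal{D}} \cdot \alpha) = Sub_N$ as natural transformations $Sub_{\mathcal{D}} \circ N \Rightarrow Sub_{\mathcal{C}}$, and since both sides are already natural transformations, this reduces to checking equality componentwise.

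At a fixed $x \in \mathcal{C}$, translating the components back into $\mathbf{DLat}$: the $x$-component of $Sub_M$ is $\varphi \mapsto M\varphi$, that of $Sub_N$ is $\varphi \mapsto N\varphi$, and the $x$-component of $Sub_{\mathcal{D}} \cdot \alpha$ corresponds in $\mathbf{DLat}$ to $\alpha_x^{*}: Sub(Mx) \to Sub(Nx)$. Reversing the composition when passing from $\mathbf{DLat}^{op}$ to $\mathbf{DLat}$, the equation to verify becomes
\[
N\varphi = \alpha_x^{*}(M\varphi) \quad \text{as subobjects of } Nx, \text{ for every } \varphi \hookrightarrow x \text{ in } \mathcal{C}.
\]

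But this is exactly the content of $\alpha$ being elementary: the defining square of Definition~\ref{elementarynattr} for the mono $\varphi \hookrightarrow x$ being a pullback is synonymous with $N\varphi = \alpha_x^{*}(M\varphi)$ in $Sub(Nx)$. So the proposition is essentially a repackaging of elementarity in pasting-diagram form; the only real step is bookkeeping of the $\mathbf{DLat}$ versus $\mathbf{DLat}^{op}$ direction, and there is no genuine obstacle.
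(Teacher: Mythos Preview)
Your proof is correct and is essentially the same as the paper's: both unwind the pasting equality to the componentwise triangle $\alpha_x^{*}\circ M = N : Sub_{\mathcal{C}}(x)\to Sub_{\mathcal{D}}(Nx)$ and then observe that this is precisely the elementarity condition. The paper simply states the triangle and invokes elementarity, while you spell out the $\mathbf{DLat}/\mathbf{DLat}^{op}$ bookkeeping more explicitly.
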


\begin{proof}
We need the commutativity of
\[\begin{tikzcd}
	& {Sub_{\mathcal{C}}(x)} \\
	{Sub_{\mathcal{D}}(Mx)} && {Sub_{\mathcal{D}}(Nx)}
	\arrow["M"', from=1-2, to=2-1]
	\arrow["{\alpha _x^*}"', from=2-1, to=2-3]
	\arrow["N", from=1-2, to=2-3]
\end{tikzcd}\]
which follows as $\alpha $ is elementary.
\end{proof}

\begin{theorem}[Stone-duality for distributive lattices, see e.g.~\cite{spectral}]
The functor $Spec: \mathbf{DLat}^{op}\to \mathbf{Spec}$ which takes a distributive lattice $L$ to the spectral space of its prime filters, and a homomorphism $h:L\to L'$ to $Spec(h):Spec(L')\to Spec(L)$, which is pulling back along $h$, is an equivalence of categories.
\end{theorem}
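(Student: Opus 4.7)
The plan is to construct an explicit quasi-inverse $\mathcal{K}: \mathbf{Spec} \to \mathbf{DLat}^{op}$ sending a spectral space $X$ to its distributive lattice $\mathcal{K}(X)$ of compact open subsets, and a spectral map (continuous map pulling compact opens back to compact opens) $f: X \to Y$ to $f^{-1}: \mathcal{K}(Y) \to \mathcal{K}(X)$. I would then show that the maps
\[ \eta_L : L \to \mathcal{K}(Spec(L)), \quad a \mapsto U_a := \{F \in Spec(L) : a \in F\}, \]
and
\[ \epsilon_X : X \to Spec(\mathcal{K}(X)), \quad x \mapsto \{U \in \mathcal{K}(X) : x \in U\}, \]
are natural isomorphisms, witnessing the equivalence.

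For $\eta_L$: the identities $U_{a \wedge b} = U_a \cap U_b$ and $U_{a \vee b} = U_a \cup U_b$ (the latter using primeness of filters) make it a distributive lattice homomorphism. Injectivity is the prime filter theorem: if $a \not\leq b$ in $L$, a Zorn argument on filters containing $a$ and disjoint from $\{c : c \leq b\}$ yields a prime such filter, witnessing $U_a \neq U_b$. For surjectivity, the $U_a$ form a basis of the topology; each is compact (again by a prime-filter argument: a cover $U_a \subseteq \bigcup_i U_{a_i}$ admitting no finite subcover would, via Zorn, produce a prime filter in $U_a$ outside every $U_{a_i}$); and then every compact open is a finite union $U_{a_1} \cup \dots \cup U_{a_n} = U_{a_1 \vee \dots \vee a_n}$.

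For $\epsilon_X$: primeness of $\epsilon_X(x)$ is immediate from $x \in U \cup V \Leftrightarrow x \in U \text{ or } x \in V$, and $\epsilon_X^{-1}(U_a) = a$ (as a subset of $X$) shows continuity. Injectivity comes from the $T_0$ axiom; surjectivity from sobriety: given $F \in Spec(\mathcal{K}(X))$, the closed set $Z = X \setminus \bigcup_{U \notin F} U$ is irreducible by primeness, so by sobriety has a unique generic point $x$, and one checks $\epsilon_X(x) = F$. Inspection of basic opens shows $\epsilon_X$ is also open, hence a homeomorphism. Naturality of $\eta$ and $\epsilon$ is a direct check from how the two functors act on morphisms.

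The main obstacle is the prime filter theorem and its second use for compactness of the $U_a$, both requiring careful Zorn arguments that use distributivity in an essential way. The remaining topological verifications — in particular that $Spec(L)$ is indeed a spectral space (sobriety amounts to yet another identification of irreducible closed sets with prime filters) — are routine once the basis $\{U_a : a \in L\}$ is in hand.
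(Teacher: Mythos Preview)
The paper does not prove this theorem; it simply states it with a reference to \cite{spectral}. Your proposal is the standard proof of Stone duality for distributive lattices and is correct.
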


\begin{remark}
The basic opens (equivalently the compact opens) in $Spec\ L$ are of the form $[\varphi ]=\{p\in Spec\ L : \varphi \in p\}$ for some element $\varphi \in L$. Hence the closure of a prime filter $p$ is given as $\bigcap _{\psi \not \in p} \{r : \psi \not \in r\}$, i.e.~$q\in \overline{p}$ iff $q\leq p$.
\label{basic}
\end{remark}

\begin{definition}
We set $S_{\mathcal{C}}=Spec \circ Sub_{\mathcal{C}}$ and $S_M=Spec \circ Sub_M$. 
Elements of $S_{\mathcal{C}}(x)$ are called $x$-types over $\mathcal{C}$. An $x$-type $p$ is called somewhere dense if $int(\overline{p})\neq \emptyset$. An $x$-type $p$ is realized by $M:\mathcal{C}\to \mathcal{D}$ if $S_{M,x}^{-1}(p)$ contains a somewhere dense $Mx$-type, otherwise it is omitted. 
\label{typespacedef}
\end{definition}

\begin{remark}
If we start with the syntactic category of a 1-sorted coherent theory $\mathcal{C}=\mathcal{C}_T$ such that the single sort corresponds to the object $x\in \mathcal{C}$, then restricting $S_{\mathcal{C}}$ to the subcategory of finite powers of $x$ and morphisms whose coordinate functions are projections: $x^k\xrightarrow{\langle p_{i_1},\dots p_{i_n}\rangle } x^n$ yields the type space functor $\mathbf{FinOrd}^{op}\to \mathbf{Spec}$, studied e.g.~in \cite{kamsma}.
\end{remark}

\begin{proposition}
Every somewhere dense type is a maximal filter.
\label{maximal}
\end{proposition}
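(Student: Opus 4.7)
The plan is to unwind the topological definition of ``somewhere dense'' using the explicit description of closures in $Spec$ recalled in Remark \ref{basic}, and then do a one-line filter-theoretic argument. I expect no serious obstacle; the content of the statement is essentially that having a nonempty interior forces $p$ to sit at the top of the specialization order.

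First I would translate the hypothesis: if $p \in S_{\mathcal{C}}(x)$ is somewhere dense then $int(\overline{p})$ is a nonempty open in the spectral space, hence contains a nonempty basic open $[\varphi] = \{r : \varphi \in r\}$ for some $\varphi \in Sub_{\mathcal{C}}(x)$. So we have $\varphi$ with (i) $[\varphi]$ nonempty and (ii) every prime filter containing $\varphi$ lies in $\overline{p}$, which by Remark \ref{basic} means it is contained in $p$.

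Next I would observe that $\varphi \in p$. Indeed, by (i) there exists a prime filter $r$ with $\varphi \in r$, and by (ii) $r \subseteq p$, so $\varphi \in p$. Finally, to prove maximality, suppose $q$ is a prime filter with $p \subseteq q$. Then $\varphi \in q$, so $q \in [\varphi]$, so by (ii) $q \subseteq p$. Combined with $p \subseteq q$ this gives $p = q$, proving that $p$ is maximal among prime filters on $Sub_{\mathcal{C}}(x)$.

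The only thing to double-check is that ``maximal filter'' in the statement refers to maximality among prime filters (equivalently, being a closed point of $S_{\mathcal{C}}(x)$), which matches the specialization-order reading of Remark \ref{basic}. With that convention the argument is essentially three lines, and the only real content is the passage from ``$int(\overline{p}) \neq \emptyset$'' to the existence of a witnessing $\varphi \in p$ whose basic open is forced into $\overline{p}$.
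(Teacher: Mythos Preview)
Your proof is correct and follows essentially the same approach as the paper: pick a nonempty basic open $[\varphi]\subseteq \overline{p}$, show $\varphi\in p$, and then use that any $q\supseteq p$ contains $\varphi$ and hence lies in $\overline{p}$, i.e.\ $q\subseteq p$. The only cosmetic difference is that the paper proves $\varphi\in p$ by contradiction (if $\varphi\notin p$ then $\overline{p}$ sits in the closed set $S_{\mathcal{C}}(x)\setminus[\varphi]$, forcing $[\varphi]=\emptyset$), whereas you argue directly via some $r\in[\varphi]$ with $r\subseteq p$.
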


\begin{proof}
Assume $\emptyset \neq [\varphi ]\subseteq \overline{p}$. We claim $\varphi \in p$. If not, then $p\in S_{\mathcal{C}}(x)\setminus [\varphi ] $ and therefore $\overline{p}\subseteq S_{\mathcal{C}}(x)\setminus [\varphi ] $ as the latter is closed. Then we have $[\varphi ]\subseteq S_{\mathcal{C}}(x)\setminus [\varphi ]$ meaning $[\varphi ]=\emptyset $. But then for each $p\leq p'$ we have $\varphi \in p'$ from which $p'\leq p$ follows.
\end{proof}

\begin{proposition}
Take $p\in S_{\mathcal{C}}(x)$ with $\mathcal{C}\not \simeq *$. The following are equivalent:
\begin{enumerate}
    \item $\emptyset \neq [\varphi ] \subseteq \overline{p}$
    \item $\psi \in p \Leftrightarrow \varphi \cap \psi \neq \emptyset $
\end{enumerate}
\label{realeq}
\end{proposition}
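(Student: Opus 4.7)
The plan is to unwind both conditions using Remark \ref{basic} and the basic Stone-duality dictionary for distributive lattices. Recall that $q \in \overline{p}$ is equivalent to $q \leq p$ (i.e.\ $q \subseteq p$ as filters), and that for any element $\chi$ of the lattice $Sub_{\mathcal{C}}(x)$ one has $[\chi] = \emptyset$ iff $\chi = 0$ (equivalently, a nonzero element lies in some prime filter, by the prime filter theorem). Throughout, ``$\chi \neq \emptyset$'' is read as $\chi \neq 0$ in $Sub_{\mathcal{C}}(x)$.

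For $(1) \Rightarrow (2)$, I would first invoke Proposition \ref{maximal} to conclude that $p$ is a maximal filter and, in particular (by the argument in that proof), that $\varphi \in p$. Then:
\begin{itemize}
  \item If $\psi \in p$, the filter property applied to $\varphi, \psi \in p$ gives $\varphi \cap \psi \in p$; since $p$ is a proper (prime) filter, $0 \notin p$, so $\varphi \cap \psi \neq \emptyset$.
  \item Conversely, if $\varphi \cap \psi \neq \emptyset$, then $[\varphi \cap \psi] = [\varphi] \cap [\psi] \neq \emptyset$, so we may pick $q \in [\varphi] \cap [\psi]$. Since $q \in [\varphi] \subseteq \overline{p}$, Remark \ref{basic} gives $q \subseteq p$, and thus $\psi \in q \subseteq p$.
\end{itemize}

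For $(2) \Rightarrow (1)$, I would first apply the biconditional with $\psi = \top$ (which lies in every prime filter, since $\mathcal{C} \not\simeq *$ ensures $Sub_{\mathcal{C}}(x)$ is a nontrivial distributive lattice): this yields $\varphi = \varphi \cap \top \neq \emptyset$, whence $[\varphi] \neq \emptyset$. Next, fix any $q \in [\varphi]$; for any $\psi \in q$, the filter property gives $\varphi \cap \psi \in q$, hence $\varphi \cap \psi \neq 0$, and then (2) forces $\psi \in p$. So $q \subseteq p$, i.e.\ $q \in \overline{p}$ by Remark \ref{basic}, proving $[\varphi] \subseteq \overline{p}$.

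There is no substantive obstacle here: the proof is essentially a bookkeeping exercise once one translates closure in $Spec$ via Remark \ref{basic} and uses the prime filter theorem to produce witnesses of nonzero elements. The only minor subtlety is the careful use of $\top \in p$ (needing the lattice to be nondegenerate, whence the hypothesis $\mathcal{C} \not\simeq *$) in the $(2) \Rightarrow (1)$ direction to extract that $\varphi \neq 0$.
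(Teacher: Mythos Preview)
Your proposal is correct and follows essentially the same route as the paper's proof: both directions are handled by translating closure via Remark~\ref{basic}, using that $\varphi\in p$ (from the proof of Proposition~\ref{maximal}), and invoking the prime filter theorem to witness nonzero elements. Your write-up is slightly more explicit (e.g.\ spelling out the use of $\psi=\top$ to extract $\varphi\neq 0$), but the argument is the same.
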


\begin{proof}
$1\Rightarrow 2$. In the previous proposition we have seen $\varphi \in p$, hence $\psi \in p$ implies $\varphi \cap \psi \in p$ and hence $\varphi \cap \psi \neq \emptyset $. Conversely, since $\varphi \cap \psi \neq \emptyset $ it is contained in a prime filter $q\in S_{\mathcal{C}}(x)$. But then by Remark \ref{basic} we have $\psi , \varphi \in q\leq p$.

$2\Rightarrow 1$. As $\mathcal{C}\not \simeq *$ we have $\varphi \neq \emptyset$ and therefore $[\varphi ]\neq \emptyset $. Assume $\varphi \in q\in S_{\mathcal{C}}(x)$. Then $\psi \in q$ implies $q\ni \psi \cap \varphi \neq \emptyset $ and hence $\psi \in p$.
\end{proof}

\begin{proposition}
Take a coherent functor $*\not \simeq \mathcal{C}\xrightarrow{M}\mathcal{D}$ and take $p\in S_{\mathcal{C}}(x)$. The following are equivalent:
\begin{enumerate}
    \item $p$ is realized by $M$
    \item there is a subobject $a\in Sub_{\mathcal{D}}(Mx)$ such that $\varphi \in p \Leftrightarrow M\varphi \cap a \neq \emptyset $ and $b_1,b_2 \leq a$, $b_1\cap b_2=\emptyset $ implies $b_1=\emptyset $ or $b_2=\emptyset $.
\end{enumerate}
\label{29}
\end{proposition}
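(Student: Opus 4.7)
The plan is to use Proposition \ref{realeq} (applied in $\mathcal{D}$) as the bridge: a type $q\in S_{\mathcal{D}}(Mx)$ is somewhere dense exactly when some $a\in Sub_{\mathcal{D}}(Mx)$ witnesses $\emptyset\neq [a]\subseteq \overline{q}$, in which case $\psi\in q\Leftrightarrow \psi\cap a\neq \emptyset$. So the subobject $a$ of clause $(2)$ should be chosen to be the same witnessing subobject, and the equivalence will boil down to matching up the two descriptions of $q$ through the pullback map $S_{M,x}$.

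For $(1)\Rightarrow (2)$, start with a somewhere dense $q\in S_{\mathcal{D}}(Mx)$ whose $S_{M,x}$-image is $p$ and pick a witness $a$ as above. Combining $\psi\in q\Leftrightarrow \psi\cap a\neq \emptyset$ with the description of $S_{M,x}$ as pulling back along $M:Sub_{\mathcal{C}}(x)\to Sub_{\mathcal{D}}(Mx)$ gives $\varphi\in p\Leftrightarrow M\varphi\cap a\neq \emptyset$. The indecomposability clause is then forced on $a$: if $b_1,b_2\leq a$ were both nonzero with $b_1\cap b_2=\emptyset$, then $b_i\cap a=b_i\neq \emptyset$ would put both $b_i$ in $q$, and meet-closure of $q$ would give $\emptyset=b_1\cap b_2\in q$, contradicting properness of the prime filter $q$.

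For $(2)\Rightarrow (1)$, define $q=\{b\in Sub_{\mathcal{D}}(Mx) : b\cap a\neq \emptyset\}$ and check that it is a somewhere dense element of $S_{\mathcal{D}}(Mx)$ with $S_{M,x}(q)=p$. Non-triviality ($a\neq \emptyset$) is free: since $p$ is a proper filter it contains the top of $Sub_{\mathcal{C}}(x)$, and $(2)$ forces $a=Mx\cap a\neq \emptyset$. Upward closure and properness of $q$ are immediate; primality uses distributivity of the subobject lattice; and meet-closure is precisely where the indecomposability hypothesis is used --- if $b_1\cap a$ and $b_2\cap a$ are nonzero subobjects of $a$, they cannot be disjoint, so $(b_1\cap b_2)\cap a\neq \emptyset$, i.e., $b_1\cap b_2\in q$. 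Then $a\in q$; moreover, for any $r\in [a]$ every $\psi\in r$ satisfies $\psi\cap a\in r$, hence $\psi\in q$; by Remark \ref{basic} this gives $[a]\subseteq \overline{q}$, so $q$ is somewhere dense. The biconditional in $(2)$ is exactly the statement $S_{M,x}(q)=p$.

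The only nontrivial step is to recognise that the indecomposability clause in $(2)$ is precisely the translation of ``the collection $\{b : b\cap a\neq \emptyset\}$ is meet-closed'', which is the step that both implications pivot on; everything else is a routine application of Proposition \ref{realeq} on each side of $S_{M,x}$.
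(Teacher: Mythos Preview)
Your proof is correct and follows essentially the same approach as the paper's: both pivot on Proposition~\ref{realeq} to identify a somewhere dense $q\in S_{\mathcal{D}}(Mx)$ with the filter $\{b:b\cap a\neq\emptyset\}$ for some witnessing $a$, then observe that this set is a prime filter precisely when the indecomposability clause on $Sub_{\mathcal{D}}(a)$ holds, and that $S_{M,x}(q)=p$ unwinds to the biconditional $\varphi\in p\Leftrightarrow M\varphi\cap a\neq\emptyset$. The paper compresses all of this into a single sentence; you have simply spelled out the verifications (meet-closure via indecomposability, $[a]\subseteq\overline{q}$ via Remark~\ref{basic}) that the paper leaves implicit.
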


\begin{proof}
The type $p$ is realized by $M$ iff there is a subobject $a$ and a prime filter $u$ with $\emptyset \neq [a]\subseteq \overline{u}$ (equivalently: a subobject $a$ such that $\{b:a\cap b\neq \emptyset \}$ is a prime filter, and this is equivalent to our condition on $Sub_{\mathcal{D}}(a)$) and with $M^{-1}(u)=p$, i.e.~with $\varphi \in p$ iff $M\varphi \in u$ iff $M\varphi \cap a \neq \emptyset $. 
\end{proof}

\begin{definition}
A coherent category $\mathcal{D}$ is weakly Boolean if for any pair of subobjects $a\hookrightarrow x\hookleftarrow b$ either $a\leq b$ or there is $\emptyset \neq u\leq a$ such that $u\cap b=\emptyset $.
\end{definition}

\begin{definition}
An object $a\in \mathcal{C}$ is an atom if $|Sub_{\mathcal{C}}(a)|=2$. $\mathcal{C}$ is 2-valued if $1\in \mathcal{C}$ is an atom.
\end{definition}

\begin{definition}
We say that a type $p\in S_{\mathcal{C}}(x)$ is strongly realized by $M:\mathcal{C}\to \mathcal{D}$ if the object $a$ of Proposition \ref{29} is an atom. It is weakly omitted if it is not strongly realized. When $\mathcal{D}$ is weakly Boolean the two notions coincide.
\end{definition}

\begin{proposition}
Let $M:\mathcal{C}\to \mathcal{D}$ be positively closed with $\mathcal{D}$ being $|\mathcal{C}|^+$-geometric and let $\mathcal{C}$ be 2-valued, i.e.~$Sub_{\mathcal{C}}(1)\cong \mathbf{2}$. Assume that the set of somewhere dense points is dense in $S_{\mathcal{D}}(Mx)$. Then every somewhere dense type $p\in S_{\mathcal{C}}(x)$ is realized by $M$. 
\end{proposition}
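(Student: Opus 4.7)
The plan is to construct a somewhere dense $u^* \in S_{\mathcal{D}}(Mx)$ with $S_M(u^*) = p$, and then conclude by maximality of $p$. Since $p$ is somewhere dense, Proposition \ref{realeq} supplies $\varphi \in p$ with $\psi \in p \Leftrightarrow \varphi \cap \psi \neq \emptyset$ in $\mathcal{C}$. A preliminary I would check (assuming $\mathcal{D}$ non-degenerate, else the statement is vacuous) is that for any subobject $\chi \hookrightarrow y$ in $\mathcal{C}$, $\chi = \emptyset$ iff $M\chi = \emptyset$: if $\chi \neq \emptyset$ then 2-valuedness forces $\exists_y \chi = 1_{\mathcal{C}}$, so preservation of finite limits, images and $0$ by $M$ gives $\exists_{My}(M\chi) = 1_{\mathcal{D}} \neq \emptyset$. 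In particular $M\varphi \neq \emptyset$; hence $[M\varphi] \subseteq S_{\mathcal{D}}(Mx)$ is a non-empty basic open, and the density hypothesis furnishes a somewhere dense $u^* \in [M\varphi]$ with witness $a^* \hookrightarrow Mx$ (i.e.\ $a^* \neq \emptyset$ and $b \in u^* \Leftrightarrow a^* \cap b \neq \emptyset$).

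Next I would prove $a^* \leq M\varphi$. Because $u^*$ is a proper prime filter, $a^*$ satisfies the atomicity condition from Proposition \ref{29}: disjoint subobjects of $a^*$ cannot both be non-empty. Positive closedness applied to $\varphi$ gives $Mx = M\varphi \cup \bigcup_{\chi \cap \varphi = \emptyset} M\chi$, and for every such $\chi$ one has $M\chi \cap M\varphi = M(\chi \cap \varphi) = \emptyset$. Pulling the expansion back along $a^* \hookrightarrow Mx$ (legal by pullback-stability of $|\mathcal{C}|^+$-unions in $\mathcal{D}$) and combining $a^* \cap M\varphi \neq \emptyset$ with the atomicity of $a^*$ forces $a^* \cap M\chi = \emptyset$ for every $\chi$ in the union, so $a^* = a^* \cap M\varphi$.

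The main step is to show $M\psi \in u^*$ for every $\psi \in p$: I would apply positive closedness now to $\varphi \cap \psi \hookrightarrow x$, writing $Mx = M(\varphi \cap \psi) \cup \bigcup_{\rho \cap \varphi \cap \psi = \emptyset} M\rho$, and pull back along $a^*$. If $a^* \cap M(\varphi \cap \psi) = \emptyset$ then some piece $a^* \cap M\rho$ must be non-empty; together with $a^* \leq M\varphi$ this gives $M(\varphi \cap \rho) \neq \emptyset$, hence $\varphi \cap \rho \neq \emptyset$ in $\mathcal{C}$ by the preliminary, so $\varphi \cap \rho \in p$. But then $\varphi \cap \rho \cap \psi \in p$ is non-empty, contradicting $\rho \cap \varphi \cap \psi = \emptyset$. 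Thus $a^* \cap M\psi \geq a^* \cap M(\varphi \cap \psi) \neq \emptyset$, i.e.\ $M\psi \in u^*$. Therefore $S_M(u^*) \supseteq p$, and maximality of $p$ (Proposition \ref{maximal}) yields $S_M(u^*) = p$, realizing $p$ via the somewhere dense $u^*$. The main obstacle is this last contradiction: it has to weave together the pullback-stable union supplied by positive closedness, the atomicity of the witness $a^*$, and the filter closure of $p$---the surrounding steps are routine bookkeeping.
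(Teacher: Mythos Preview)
Your argument is correct. Both proofs share the same skeleton: use Proposition~\ref{realeq} to fix a witness $\varphi$ for $p$, use 2-valuedness to show $M$ reflects the initial object, invoke positive closedness to decompose $Mx$, and then pick a somewhere dense point in $[M\varphi]$ by the density hypothesis.

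The execution differs. The paper never introduces the witness subobject $a^*$; instead it proves directly that $M\varphi \leq M\psi$ for every $\psi \in p$ (using the decomposition $Mx = M\psi \cup \bigcup_{\psi'\cap\psi=\emptyset} M\psi'$ and observing that $M\varphi$ is disjoint from every term in the union), and symmetrically that $\psi \notin p$ forces $M\psi \notin u$. This yields the stronger conclusion that \emph{every} prime filter $u$ containing $M\varphi$ satisfies $S_M(u)=p$, so any somewhere dense point in $[M\varphi]$ works, and no appeal to maximality is needed. Your route via the atomicity of $a^*$ and the auxiliary step $a^*\leq M\varphi$ is a bit more hands-on and only establishes one inclusion, closing with Proposition~\ref{maximal}; it is equally valid but slightly less economical.
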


\begin{proof}
Assume $\emptyset \neq [\varphi ]\subseteq \overline{p}$. Any coherent functor with 2-valued domain reflects the initial object. Indeed if $x\neq 0$ then $\exists _!x \neq 0$ since any map to the initial object is an isomorphism. But then we must have $\exists _!x=1$ and hence $\exists _!Mx=1$ from which $Mx\neq \emptyset $ follows.

By Proposition \ref{realeq} there's
\[
\psi \in p \Leftrightarrow \psi \cap \varphi \neq \emptyset \Leftrightarrow M\psi \cap M\varphi \neq \emptyset
\]
as $M$ reflects the initial object. For $\psi \leq x$ we have $Mx=M\psi \sqcup \bigcup_{\psi '\cap \psi =\emptyset } M\psi '$. Take $u\in S_{\mathcal{D}}(Mx)$ with $M\varphi \in u$. Then
\begin{multline*}
   \psi \in p \Rightarrow \forall \psi ': \psi '\cap \psi =\emptyset \to \psi '\not \in p \Rightarrow \\ M\psi '\cap M\varphi =\emptyset \Rightarrow M\varphi \cap \bigcup_{\psi '\cap \psi =\emptyset } M\psi ' =\emptyset \Rightarrow  M\varphi \leq M\psi \Rightarrow M\psi \in u
\end{multline*}
and
\begin{multline*}
\psi \not \in p \Rightarrow M\psi \cap M\varphi =\emptyset \Rightarrow \exists \psi ': \psi \cap \psi '=\emptyset \text{ and } M\varphi  \cap M\psi ' \neq \emptyset \Rightarrow \\ \psi '\in p \Rightarrow M\psi '\in u \Rightarrow M\psi \not \in u
\end{multline*}
meaning $M^{-1}(u)=p$ (and therefore $u\in S_M^{-1}(p)$). By assumption there is a somewhere dense prime filter $u\in  [M\varphi ]\neq \emptyset$ which completes the proof.
\end{proof}

\begin{proposition}
Let $f:x\to y$ be an arrow of $\mathcal{C}$. Then $S_{\mathcal{C}}(f)$ is an open map. If $f$ is mono/ effective epi then $S_{\mathcal{C}}(f)$ is injective/ surjective.
\label{arrows}
\end{proposition}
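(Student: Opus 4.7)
The plan is to show that for every $\varphi \in Sub_{\mathcal{C}}(x)$ one has the identity $S_{\mathcal{C}}(f)([\varphi]) = [\exists_f \varphi]$, where $\exists_f \dashv f^*$ is the image-along-$f$ adjoint (which exists in any coherent category, since images are pullback-stable). Once this identity is in hand, openness drops out because the basic opens $[\varphi]$ form a basis for the topology on $S_{\mathcal{C}}(x)$; the mono and effective-epi cases also follow by easy specializations, as explained below.

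First I would unwind: $S_{\mathcal{C}}(f)$ sends a prime filter $p$ on $Sub_{\mathcal{C}}(x)$ to $(f^*)^{-1}(p) \in S_{\mathcal{C}}(y)$. The easy direction $S_{\mathcal{C}}(f)([\varphi]) \subseteq [\exists_f \varphi]$ uses only the unit $\varphi \leq f^* \exists_f \varphi$ of the adjunction. For the reverse inclusion, given $q \in [\exists_f \varphi]$, I would invoke the prime filter existence theorem for distributive lattices: let $F$ be the filter in $Sub_{\mathcal{C}}(x)$ generated by $\{\varphi\} \cup f^*(q)$ and let $I$ be the ideal generated by $f^*(Sub_{\mathcal{C}}(y) \setminus q)$; a prime filter $p$ extending $F$ and disjoint from $I$ satisfies $\varphi \in p$ and $(f^*)^{-1}(p) = q$.

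The main obstacle is verifying $F \cap I = \emptyset$. Using that $q$ is closed under meets and its complement under joins, this reduces to showing: if $\varphi \wedge f^* \psi \leq f^* \psi'$ with $\psi \in q$ and $\psi' \notin q$, then a contradiction arises. Applying $\exists_f$ and invoking Frobenius reciprocity $\exists_f(\varphi \wedge f^* \psi) = \exists_f \varphi \wedge \psi$ (a standard consequence of pullback-stability of image factorizations, verified by pulling back the factorization $\varphi \twoheadrightarrow \exists_f \varphi \hookrightarrow y$ along $\psi \hookrightarrow y$) one obtains $\exists_f \varphi \wedge \psi \leq \psi'$; since $\exists_f \varphi, \psi \in q$ and $q$ is a filter, this forces $\psi' \in q$, contradicting $\psi' \notin q$.

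For the mono case, if $f$ is monic then $f^* \circ \exists_f = \mathrm{id}_{Sub_{\mathcal{C}}(x)}$ (the subobject $\alpha \hookrightarrow x$ pulled back along $f$ from its push-forward $\alpha \hookrightarrow x \hookrightarrow y$ is $\alpha$ itself), so $f^* \colon Sub_{\mathcal{C}}(y) \to Sub_{\mathcal{C}}(x)$ is a split surjection of distributive lattices; prime filters on $Sub_{\mathcal{C}}(x)$ are distinguished by their values on elements of the form $f^* \beta$, hence $S_{\mathcal{C}}(f)$ is injective. For the effective epi case, the uniqueness of effective-epi/mono factorizations forces $\exists_f(\top_x) = \top_y$, so $[\exists_f \top_x] = S_{\mathcal{C}}(y)$, and the identity $S_{\mathcal{C}}(f)([\top_x]) = [\exists_f \top_x]$ proved above yields surjectivity.
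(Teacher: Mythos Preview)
Your proof is correct. The approach is essentially the same as the paper's for openness: both rest on the adjunction $\exists_f \dashv f^*$ together with Frobenius reciprocity $\exists_f(\varphi \wedge f^*\psi) = \exists_f\varphi \wedge \psi$. The paper packages this by verifying the equivalence $f^*(a)\wedge r \leq f^*(b) \iff a\wedge \exists_f(r)\leq b$ and then citing a general theorem from \cite{spectral} characterizing when $Spec$ of a lattice map is open; you instead unpack that theorem and prove the explicit formula $S_{\mathcal{C}}(f)([\varphi]) = [\exists_f\varphi]$ directly via the prime filter existence theorem. The content is the same, yours is just self-contained.

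For the mono case you and the paper both use that $f^*$ is surjective (you note it is split by $\exists_f$), which forces $Spec(f^*)$ to be injective; again the paper cites \cite{spectral} while you spell it out. For the effective epi case there is a genuine, if minor, difference: the paper argues that $f^*$ is injective and invokes the general fact that $Spec$ of an injective lattice map is surjective, whereas you reuse your image formula with $\varphi = \top_x$ to get $S_{\mathcal{C}}(f)(S_{\mathcal{C}}(x)) = [\exists_f\top_x] = [\top_y] = S_{\mathcal{C}}(y)$. Your route here is shorter and avoids a second appeal to the prime ideal theorem hidden in the cited result.
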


\begin{proof}
Image-factorization $\exists _f :Sub(x)\to Sub(y)$ is left adjoint to the pullback map $f^*:Sub(y)\to Sub(x)$, i.e.~we have that $\exists _f(r)\leq a$ iff $r\leq f^*(a)$. Hence we have $f^*(a)\wedge r \leq f^*(b)$ iff $\exists _f(f^*a\wedge r)\leq b$ iff $a\wedge \exists _f(r)\leq b$, where the last equivalence follows from Proposition 3.2.7.~of \cite{makkai}. By Theorem 8.~of \cite{spectral} this implies that $S_{\mathcal{C}}(f)=Spec(f^*)$ is open.

The second part follows from Theorem 6.~of \cite{spectral}, as if $f$ is monic then $f^*$ is surjective, while if $f$ is effective epi then $f^*$ is injective. 
\end{proof}

\begin{definition}
Let $\mathcal{C}$ be a coherent category and $\overline{\mathbb{S}}$ be a list of objects (typically a filtration but every element has $\omega $-many occurrences in the list). We write $S_{\mathcal{C}}^{\omega }=S_{\mathcal{C}}^{\omega }(\overline{\mathbb{S}})$ for the cofiltered limit $lim_{X\in [\overline{\mathbb{S}}]^{<\omega }} S_{\mathcal{C}}(\prod _{s\in X} s)$ (i.e.~the limit is taken over finite products from $\overline{\mathbb{S}}$ and the projection maps). Points of $S_{\mathcal{C}}^{\omega }$ are called $\omega $-types over $\mathcal{C}$ (even if $\overline{\mathbb{S}}$ is uncountable). 

$S_{\mathcal{C}}^{\omega }(\overline{\mathbb{S}})$ is the $Spec$-image of the filtered colimit $colim _{X\in ([\overline{\mathbb{S}}]^{<\omega })^{op}}Sub_{\mathcal{C}}(\prod _{s\in X} s)$. In particular any coherent functor $M:\mathcal{C}\to \mathcal{D}$ yields an induced map 
\[
colim _{X\in ([\overline{\mathbb{S}}]^{<\omega })^{op}}Sub_{\mathcal{C}}(\prod _{s\in X} s)\to colim _{X\in ([\overline{\mathbb{S}}]^{<\omega })^{op}}Sub_{\mathcal{D}}(\prod _{s\in X} Ms)
\]
whose $Spec$-image we call $S^{\omega }_M:S_{\mathcal{D}}^{\omega }(M\overline{\mathbb{S}})\to S_{\mathcal{C}}^{\omega }(\overline{\mathbb{S}})$.

An $\omega $-type $q\in S_{\mathcal{C}}^{\omega }(\overline{\mathbb{S}})$ is realized by $M:\mathcal{C}\to \mathcal{D}$ if there is a point $u\in S_{\mathcal{D}}^{\omega }(M\overline{\mathbb{S}})$ with $S^{\omega }_M(u)=q$ and for which each projection $u_X\in S_{\mathcal{D}}(\prod _{s\in X} Ms)$ is somewhere dense. 
\label{real}
\end{definition}

\begin{remark}
By Lemma 5.24.5.~of \cite{stacks-project} $S_{\mathcal{C}}^{\omega }$ is also the limit in $\mathbf{Top}$.
\label{toplim}
\end{remark}

\begin{proposition}
Let $*\not \simeq \mathcal{C}\xrightarrow{M} \mathcal{D}$ be a coherent functor and take $q\in S_{\mathcal{C}}^{\omega }(\overline{\mathbb{S}})$. The following are equivalent:
\begin{enumerate}
    \item $q$ is realized by $M$
    \item there are subobjects $b_X\hookrightarrow M(\prod _{s\in X} s)$ such that \begin{itemize}
        \item $\exists _{\langle \pi _{i_1},\dots \pi _{i_k}\rangle } (b_{\{s_1,\dots s_n\}}) \cap b_{\{s_{i_1},\dots s_{i_k}\}} \neq \emptyset $
        \item $a,a'\leq b_X$, $a\cap a'=\emptyset $ $\Rightarrow $ $a=\emptyset $ or $a'=\emptyset $
        \item $\varphi \in q_X \Leftrightarrow M\varphi \cap b_X \neq \emptyset $
    \end{itemize}
\end{enumerate}
\label{infreal}
\end{proposition}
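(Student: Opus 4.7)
The plan is to reduce to the finite case (Proposition \ref{29}) pointwise in $X$, translating the cofiltered compatibility condition on the lifts $(u_X)_X$ into the non-emptiness statement (2)(i) via Frobenius reciprocity. The computation to keep in mind is that for $X \subseteq Y$ with projection $\pi: \prod_{s \in Y} s \to \prod_{s \in X} s$, Frobenius (Proposition 3.2.7 of \cite{makkai}), together with the fact that image factorization reflects the initial object in a coherent category (strictness of $0$), yields
\[
(M\pi)^*(c) \cap b_Y \neq \emptyset \iff c \cap \exists_{M\pi}(b_Y) \neq \emptyset
\]
for every $c \in Sub_\mathcal{D}(M(\prod_{s\in X} s))$.

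For $1 \Rightarrow 2$, start with a realization $u = (u_X)_X \in S^\omega_\mathcal{D}(M\overline{\mathbb{S}})$. Applying Proposition \ref{29} pointwise to the somewhere dense $u_X$ yields $b_X \hookrightarrow M(\prod_{s \in X} s)$ satisfying (2)(ii) and (2)(iii), with $u_X = \{c : c \cap b_X \neq \emptyset\}$. The cofiltered compatibility $u_X = S_\mathcal{D}(M\pi)(u_Y) = \{c : (M\pi)^*(c) \in u_Y\}$ rewrites via the key equivalence as $u_X = \{c : c \cap \exists_{M\pi}(b_Y) \neq \emptyset\}$; evaluating at $c = b_X$ (which lies in $u_X$ because $1 \in u_X$ forces $b_X \neq \emptyset$) then delivers (2)(i).

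For $2 \Rightarrow 1$, conversely set $u_X := \{c : c \cap b_X \neq \emptyset\}$. Property (2)(ii) makes this a prime filter (as in the proof of Proposition \ref{29}), (2)(iii) yields $S_M(u_X) = q_X$, and $b_X \neq \emptyset$ (forced by $1 \in q_X$ via (2)(iii)) combined with Proposition \ref{realeq} makes $u_X$ somewhere dense. The only nontrivial task is the cofiltered compatibility $u_X = S_\mathcal{D}(M\pi)(u_Y)$ for $X \subseteq Y$. The right-hand side is automatically a prime filter (being the pullback of a prime filter along the lattice homomorphism $(M\pi)^*$) and, by the key equivalence, equals $\{c : c \cap \exists_{M\pi}(b_Y) \neq \emptyset\}$. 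Condition (2)(i) places $\exists_{M\pi}(b_Y)$ in $u_X$ and $b_X$ in $S_\mathcal{D}(M\pi)(u_Y)$; hence for any $c \in u_X$, the meet $c \wedge \exists_{M\pi}(b_Y)$ lies in $u_X$, is therefore nonempty, and places $c$ in $S_\mathcal{D}(M\pi)(u_Y)$. The reverse containment is symmetric.

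The conceptual heart of the argument is that (2)(i) is precisely what is needed to force the two prime filters $u_X$ and $S_\mathcal{D}(M\pi)(u_Y)$ — each already determined by its generating subobject in the sense of Proposition \ref{29} — to coincide. No separate propagation of primeness along $\exists_{M\pi}$ is needed, because closure of prime filters under binary meet absorbs the argument.
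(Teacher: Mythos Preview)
Your proof is correct and follows essentially the same approach as the paper: both reduce to Proposition~\ref{29} pointwise, then translate the compatibility condition on the $u_X$'s into the Frobenius-based equivalence $(M\pi)^*(c)\cap b_Y\neq\emptyset \Leftrightarrow c\cap\exists_{M\pi}(b_Y)\neq\emptyset$, and finally argue that condition (2)(i) is exactly what forces the two candidate prime filters to coincide. Your exposition spells out the final step (via closure of prime filters under meets) slightly more explicitly than the paper, but the argument is the same.
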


\begin{proof}
As elements of the limit correspond to compatible sequences, $q$ is realized by $M$ iff there's $u_X\in S_{\mathcal{D}}(\prod _{s\in X} Ms)$ with $\emptyset \neq [b_X] \subseteq \overline{u_X}$ such that $S_{M,\prod _X s}(u_X)=q_X$ and $S_{\mathcal{D}}(\langle \pi _{i_1},\dots \pi _{i_k} \rangle )(u_{\{ s_1,\dots s_n \}})=u_{\{s_{i_1},\dots s_{i_k}\}}$. 

By Proposition \ref{29} the first two properties are equivalent to the last two bullet points while the third one means 
\[c\in u_{\{s_{i_1},\dots s_{i_k}\}} \text{ iff } c\times \prod _{j\in \{1,\dots n \}\setminus \{i_1,\dots i_k\}} Ms_j \in u_{\{s_1,\dots s_n\}}
\]
i.e.
\begin{multline*}
c\cap b_{\{s_{i_1}, \dots s_{i_k}\}} \neq \emptyset \Leftrightarrow c\times \prod _{j\in \{1,\dots n \}\setminus \{i_1,\dots i_k\}} Ms_j \cap b_{\{s_1,\dots s_n\}} \neq \emptyset \Leftrightarrow \\ c\cap \exists _{\langle \pi _{i_1},\dots \pi _{i_k}\rangle}b_{\{s_1,\dots s_n\}} \neq \emptyset 
\end{multline*}
Taking $c= \exists _{\langle \pi _{i_1},\dots \rangle}b_{\{ s_1,\dots \}}$ implies the first bullet point. Conversely, assuming $\exists _{\langle \pi _{i_1},\dots \pi _{i_k}\rangle } (b_{\{s_1,\dots s_n\}}) \cap b_{\{s_{i_1},\dots s_{i_k}\}} \neq \emptyset $ we conclude that $ \exists _{\langle \pi _{i_1},\dots \pi _{i_k}\rangle } b_{\{s_1,\dots s_n\}}\in u_{\{s_{i_1},\dots \}}$ and $b_{\{s_{i_1},\dots  s_{i_k}\}}\times \prod _{j\in \{1,\dots n \}\setminus \{i_1,\dots i_k\}} Ms_j \in u_{\{s_1,\dots \}}$ from which the above equivalence follows.
\end{proof}

\begin{definition}
A type $q\in S_{\mathcal{C}}^{\omega }(\overline{\mathbb{S}})$ is strongly realized by $M:\mathcal{C}\to \mathcal{D}$ if each $b_X$ is an atom. (When $\mathcal{D}$ is weakly Boolean $q$ is realized iff it is strongly realized.)
\end{definition}

\begin{proposition}
Let $\overline{\mathbb{S}}$ be countable and assume that for each $s\in \overline{\mathbb{S}}$ the map $s\to 1$ is effective epi. Then for each finite $X\subseteq \overline{\mathbb{S}}$ the projection map $\pi _{X}:S_{\mathcal{C}}^{\omega }(\overline{\mathbb{S}})\to S_{\mathcal{C}}(\prod _{s\in X} s)$ is an open surjection.
\end{proposition}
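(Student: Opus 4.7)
The plan is to reduce the statement to properties of the individual spectral maps $S_{\mathcal{C}}(\pi)$ supplied by Proposition \ref{arrows}, by routing $\pi_X$ through projections to enlarged finite subsets. The key preliminary observation is that for $X \subseteq W \subseteq \overline{\mathbb{S}}$ finite the projection $\pi_{WX}: \prod_{s \in W} s \to \prod_{s \in X} s$ is an effective epimorphism in $\mathcal{C}$. This follows from the hypothesis on the maps $s \to 1$ together with the general fact that in a coherent category finite products of effective epis are effective epi: given $g: c \to d$ effective epi, $1_a \times g: a \times c \to a \times d$ is a pullback of $g$ (along $\pi_d: a \times d \to d$) and hence effective epi by pullback-stability, so $f \times g = (f \times 1)\circ(1 \times g)$ is a composition of effective epis, which is again effective epi in a regular (hence coherent) category. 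Inductively $\prod_{s \in W \setminus X} s \to 1$ is effective epi, and $\pi_{WX}$ is obtained from it by product with $1_{\prod_{s \in X} s}$.

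For surjectivity of $\pi_X$, I would use the description of $S_{\mathcal{C}}^\omega$ from Definition \ref{real} as $Spec$ applied to the filtered colimit $colim_Y Sub_{\mathcal{C}}(\prod_{s \in Y} s)$, under which $\pi_X$ corresponds to the canonical inclusion $Sub_{\mathcal{C}}(\prod_{s \in X} s) \to colim_Y Sub_{\mathcal{C}}(\prod_{s \in Y} s)$. By Proposition \ref{arrows} (invoking the dictionary between injective $\mathbf{DLat}$-maps and surjective spectral maps from \cite{spectral}) each transition map $\pi_{WX}^*$ in the filtered colimit is injective. Filtered colimits in $\mathbf{DLat}$ are computed on underlying sets, so the canonical map into the colimit is injective, and applying $Spec$ yields the surjectivity of $\pi_X$.

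For openness, by Remark \ref{toplim} a basic open of $S_{\mathcal{C}}^\omega(\overline{\mathbb{S}})$ may be taken to be $\pi_Z^{-1}(V)$ for $Z \subseteq \overline{\mathbb{S}}$ finite and $V$ open in $S_{\mathcal{C}}(\prod_{s \in Z} s)$. Set $W = X \cup Z$ and $V' = S_{\mathcal{C}}(\pi_{WZ})^{-1}(V)$, so that $\pi_Z^{-1}(V) = \pi_W^{-1}(V')$. Since $\pi_W$ is surjective by the previous step and $\pi_X = S_{\mathcal{C}}(\pi_{WX}) \circ \pi_W$, I obtain
\[
\pi_X(\pi_Z^{-1}(V)) = S_{\mathcal{C}}(\pi_{WX})(\pi_W(\pi_W^{-1}(V'))) = S_{\mathcal{C}}(\pi_{WX})(V'),
\]
which is open by the openness half of Proposition \ref{arrows}. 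As $\pi_X$ commutes with unions and every open in the limit is a union of such basic opens, $\pi_X$ is an open map.

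The main obstacle is essentially bookkeeping: checking carefully that finite products of effective epis remain effective epi, and keeping the variance of $Sub_{\mathcal{C}}$ straight when translating between the filtered colimit of subobject lattices and the cofiltered limit of spectral spaces. I note in passing that the countability hypothesis on $\overline{\mathbb{S}}$ does not appear to be used in this argument; only the effective-epi hypothesis on each $s \to 1$ is invoked.
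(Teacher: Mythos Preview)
Your proof is correct and takes a genuinely different route from the paper's. The paper exploits countability to pick a cofinal $\omega$-chain $s_0 \leftarrow s_0\times s_1 \leftarrow \dots$, proves surjectivity of the limit projections topologically (the cofiltered limit of a tower of surjections in $\mathbf{Top}$ has surjective projections), and then verifies openness by a case split on whether $k\leq n$ or $k\geq n$ along this chain. You instead work directly with the full filtered diagram indexed by $[\overline{\mathbb{S}}]^{<\omega}$: surjectivity is obtained on the lattice side (all transition maps $\pi_{WX}^*$ are injective, hence so is the structure map into the filtered colimit, and $Spec$ turns this into a surjection), and openness is handled uniformly by enlarging to $W=X\cup Z$ so that $\pi_X$ factors through $\pi_W$ with the remaining factor $S_{\mathcal{C}}(\pi_{WX})$ open by Proposition~\ref{arrows}.

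Your observation that countability is not actually used is correct: your argument goes through for arbitrary $\overline{\mathbb{S}}$, whereas the paper's argument genuinely relies on the cofinal chain. The paper's approach is perhaps more concrete at the level of spaces, but yours is cleaner and strictly more general.
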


\begin{proof}
Writing $\overline{\mathbb{S}}=(s_i)_{i<\omega }$ yields a final subdiagram $s_0\leftarrow s_0\times s_1\leftarrow \dots$. As effective epimorphisms are pullback-stable each projection $s_0\times \dots \times s_{n-1}\leftarrow s_0\times \dots \times s_{n}$ is effective epi, then by Proposition \ref{arrows} each arrow $S_{\mathcal{C}}(s_0\times \dots \times s_{n-1})\xleftarrow{S_{\mathcal{C}}(\pi _{0\dots n-1})} S_{\mathcal{C}}(s_0\times \dots \times s_{n})$ is surjective.

In $\mathbf{Set}$ the transfinite cocomposition of surjections is surjective, hence the same holds in $\mathbf{Top}$ as the forgetful functor preserves limits. By Remark \ref{toplim} it follows that each $\pi _{\{s_0,\dots s_n\}}$ is onto. Hence each $\pi _X$ is onto.

By Lemma 5.14.2.~of \cite{stacks-project} the preimages $\{\pi _{\{s_0,\dots s_n\}}^{-1}(V): n\in \mathbb{N}, V\subseteq  S_{\mathcal{C}}(\prod _{i\leq n} s_i) \text{ open}\}$ form a basis of the limit. As images distribute over unions it suffices to prove that these are mapped to opens by $\pi _{\{s_0,\dots s_k \}}$'s. When $k=n$ we get back $V$ by surjectivity. If $k\geq n$ it follows from $S_{\mathcal{C}}(\prod _{i\leq n} s_i)\leftarrow S_{\mathcal{C}}(\prod _{i\leq k} s_i)$ being continuous. If $k\leq n$ it follows from $S_{\mathcal{C}}(\prod _{i\leq k} s_i)\leftarrow S_{\mathcal{C}}(\prod _{i\leq n} s_i)$ being open.
\end{proof}

Now we rewrite Lemma 4.2.~of \cite{haykazyan} with coherent categories. The following is Theorem 4.2.~in \cite{isbell}:

\begin{theorem}[Isbell]
Any compact sober space (hence every spectral space) is a Baire-space, i.e.~a countable intersection of dense open sets is dense.
\label{isbell}
\end{theorem}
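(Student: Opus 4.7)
Let $X$ be a compact sober space, $V$ a nonempty open, and $U_1, U_2, \ldots$ dense opens. The plan is to exhibit a point $x \in V \cap \bigcap_n U_n$, which by letting $V$ range over nonempty opens yields density of $\bigcap_n U_n$.

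First I would set $V_n := V \cap U_1 \cap \ldots \cap U_n$ and verify inductively that each $V_n$ is nonempty open, using density of $U_{n+1}$ against the nonempty open $V_n$. I would then consider the proper filter
\[
F \;=\; \{\,W \text{ open in } X : V_n \subseteq W \text{ for some } n\,\}
\]
in the frame of opens $\mathcal{O}(X)$, and extend it via Zorn's lemma to a maximal proper filter $\hat{F}$, which is automatically prime in the finitary sense ($A \vee B \in \hat{F} \Rightarrow A \in \hat{F}$ or $B \in \hat{F}$) by the prime ideal theorem for distributive lattices.

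By sobriety of $X$, points of $X$ correspond bijectively to completely prime filters of $\mathcal{O}(X)$ via $x \mapsto \{W : x \in W\}$. So it suffices to promote $\hat{F}$ to a completely prime filter $\tilde{F} \supseteq \hat{F}$: the corresponding point $x$ then satisfies $V_n \in \tilde{F}$, hence $x \in V_n$ for all $n$, so $x \in V \cap \bigcap_n U_n$ as required.

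The main obstacle is this promotion, where compactness of $X$ enters essentially, since general maximal proper filters in a frame are only finitely prime. I would appeal to the Hofmann--Mislove theorem, which for sober $X$ identifies Scott-open filters in $\mathcal{O}(X)$ with compact saturated subsets of $X$ (proper filters corresponding to nonempty subsets). Using compactness of $X$ to verify properness, one extends $\hat{F}$ to a proper Scott-open filter; its associated nonempty compact saturated set $Q \subseteq X$ sits inside each $V_n$ (since $V_n \in \hat{F}$), and one finally selects a specialization-minimal point of $Q$, which exists because $Q$ is a nonempty closed subspace of the compact sober space $X$ and is therefore itself compact and sober. This point furnishes the desired witness $x$.
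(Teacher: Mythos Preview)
The paper does not give its own proof of this statement; it merely quotes it as Theorem~4.2 of \cite{isbell}. So there is no in-paper argument to compare against, and I will assess your outline on its own.

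Your construction of the $V_n$ and of the filter $F$ is fine, but the Hofmann--Mislove step contains a genuine gap. You write that one ``extends $\hat{F}$ to a proper Scott-open filter'', yet $\hat{F}$ was chosen \emph{maximal} proper: the only proper filter containing it is $\hat{F}$ itself. So this step would force $\hat{F}$ to already be Scott-open, and in a compact sober space it need not be. Concretely, take $X=[0,1]$ (compact Hausdorff, hence sober), let $F_0$ be the filter of dense open sets, and let $\hat{F}\supseteq F_0$ be maximal proper. If $\hat{F}$ were Scott-open, Hofmann--Mislove would give a nonempty compact $Q$ contained in every dense open; but for any $x\in Q$ the dense open $[0,1]\setminus\{x\}$ misses $x$, a contradiction. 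Thus a maximal proper filter of opens in a compact sober space need not be Scott-open (let alone completely prime), and the promotion cannot be carried out as described. A secondary slip: the compact saturated set $Q$ produced by Hofmann--Mislove is upward-closed in the specialization order, not closed in $X$, so your justification for a specialization-minimal point of $Q$ is also off.

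The standard argument works on the closed side and avoids filters of opens entirely. With the $V_n$ as in your setup, let $\mathcal{F}$ be the family of nonempty closed $C\subseteq X$ meeting every $V_n$; it is nonempty since $X\in\mathcal{F}$. Chains in $\mathcal{F}$ have nonempty intersection by compactness, and this intersection still meets each $V_n$ (else already some member of the chain misses $V_n$). Zorn yields a minimal $C_0\in\mathcal{F}$. If $C_0=A\cup B$ with $A,B$ proper closed, minimality gives $m,n$ with $A\cap V_m=\emptyset$, $B\cap V_n=\emptyset$; for $k=\max(m,n)$ one gets $C_0\cap V_k=\emptyset$, a contradiction. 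So $C_0$ is irreducible, and sobriety provides its generic point $x$. Since each open $V_n$ meets $\overline{\{x\}}=C_0$, we have $x\in V_n$ for all $n$, hence $x\in V\cap\bigcap_n U_n$.
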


\begin{definition}
Fix a countable coherent category $\mathcal{C}$ with filtration $\mathbb{S}$ such that for each $s\in \mathbb{S}$ the map $s\to 1$ is effective epi. Let $\overline{\mathbb{S}}=(s_i)_{i<\omega }$ be the list of objects from $\mathbb{S}$ where every element appears $\omega $-many times. We will write $\pi _K$ for $\pi _{\{s_0,\dots s_{K-1}\}}:S_{\mathcal{C}}(\overline{\mathbb{S}})\to S_{\mathcal{C}}(\prod _{n<K}s_n)$.

Take $\varphi \hookrightarrow s_{i_0}\times \dots s_{i_{k-1}}\times s_{j_0}\times \dots s_{j_{l-1}}$ with $k>0, l\geq 0$, and a function $f:k\to \omega $ such that $s_{i_n}$ and $s_{f(n)}$ are copies of the same element of $\mathbb{S}$ (we will refer to this as the sort of $s_{i_n}$ is the same as the sort of $s_{f(n)}$). Given $g:l\to \omega$ such that the sort of $s_{j_n}$ coincides with that of $s_{g(n)}$ we denote by $\varphi _{f,g}^* \hookrightarrow s_0\times \dots \times s_{K-1}$ the pullback of $\varphi $ along $\langle \pi_{f(0)},\dots \pi_{f({k-1})},\pi_{g(0)},\dots \pi_{g({l-1})} \rangle:\prod _{n<K}s_n\to s_{i_0}\times \dots s_{i_{k-1}}\times s_{j_0}\times \dots s_{j_{l-1}}$ for some $K> max\{ f(n), g(m) :n<k, m<l\}$. Let $O^f_{\varphi }$ be the following open set:
\begin{multline*}
O^f_{\varphi }=\bigcup_{\substack{g:l\to \omega \\ sort(s_{j_n})=sort(s_{g(n)})}} \pi _{K}^{-1}([\varphi _{f,g}^*]) \cup \\ \bigcup _{\substack{\chi \hookrightarrow s_{i_0}\times \dots s_{i_{k-1}} \times s_{t_0}\times \dots s_{t_{r-1}} \\ (\exists _{\pi }\chi)^*_f \cap (\exists _{\pi }\varphi )^*_f =\emptyset }} \bigcup _{\substack{h:r\to \omega \\ sort(s_{t_n})=sort(s_{h(n)})}} \pi _{K}^{-1}([\chi _{f,h}^*])
\end{multline*}
Finally we set $G=\bigcap _{\varphi } \bigcap _f O_{\varphi }^f \subseteq S_{\mathcal{C}}^{\omega }(\overline{\mathbb{S}})$.
\end{definition}

\begin{remark}
The above definition (hopefully) expresses the following: we fix an $\omega $-sequence of variables $x_i$ such that every sort appears infinitely many times. Then for any formula $\varphi (\vec{x},\vec{y})$ and any identification of $\vec{x}$ with some variables from the sequence ($f\vec{x}$), either we can identify $\vec{y}$ with some other variables ($g\vec{y}$) such that $\varphi (f\vec{x},g\vec{y})$ is in our $\omega $-type or there's $\chi (\vec{x},\vec{z})$ and an identification $h$ such that $\chi (f\vec{x},h\vec{z})$ is in our $\omega $-type and this is $T$-provably an obstruction for $\varphi $ being present, i.e.~$T\vdash \exists\vec{y} \varphi (f\vec{x},\vec{y})\wedge \exists\vec{z} \chi (f\vec{x},\vec{z}) \Leftrightarrow \bot $.
\end{remark}

\begin{proposition}
$G$ is comeagre in $S_{\mathcal{C}}^{\omega }(\overline{\mathbb{S}})$.
\end{proposition}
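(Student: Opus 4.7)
The plan is to show each $O_\varphi^f$ is open and dense in $S_\mathcal{C}^\omega(\overline{\mathbb{S}})$, and then to invoke Isbell's theorem (Theorem \ref{isbell}): since $S_\mathcal{C}^\omega(\overline{\mathbb{S}}) = Spec(colim_X Sub_\mathcal{C}(\prod_{s\in X} s))$ is spectral, the countable intersection $G$ will be comeagre. The countability of the index set follows from $\mathcal{C}$ being countable (so countably many $\varphi$) together with the fact that for each finite $k$ there are only countably many $f: k \to \omega$. Openness of $O_\varphi^f$ is immediate from the definition as a union of preimages of basic opens under the continuous projections $\pi_K$.

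For density, I would fix a nonempty basic open $\pi_K^{-1}([\psi])$ with $\psi \hookrightarrow \prod_{n<K} s_n$, enlarge $K$ so that $f(n) < K$ for all $n$, and assume (WLOG, reducing to the injective case via a preliminary diagonal) that $f$ is injective. Set $\tau = \langle \pi_{f(0)},\dots,\pi_{f(k-1)}\rangle:\prod_{n<K} s_n \to s_{i_0}\times \dots \times s_{i_{k-1}}$, $A_0 = \exists_{\pi_y} \varphi$, and $A = \tau^* A_0 = (\exists_\pi \varphi)_f^*$. The argument splits on whether $\psi \wedge A$ is empty. In Case 1 ($\psi \wedge A \neq \emptyset$), choose $g : l \to \omega$ with $g(0),\dots,g(l-1) \geq K$ pairwise distinct and of matching sorts with $s_{j_0},\dots,s_{j_{l-1}}$ (possible since each sort occurs $\omega$-many times in $\overline{\mathbb{S}}$); put $K' = K+l$ and let $\rho : \prod_{n<K'} s_n \to \prod_{n<K} s_n$ be the coordinate projection. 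Then $\pi = \langle \pi_{f(0)},\dots,\pi_{g(l-1)}\rangle$ fits in a pullback square together with $\rho$, $\tau$, $\pi_y$, so Beck-Chevalley gives $\exists_\rho \varphi_{f,g}^* = \tau^* A_0 = A$, and Frobenius yields $\exists_\rho(\rho^*\psi \wedge \varphi_{f,g}^*) = \psi \wedge A \neq \emptyset$. Consequently $\pi_K^{-1}([\psi]) \cap \pi_{K'}^{-1}([\varphi_{f,g}^*])$ is a nonempty basic open (using surjectivity of $\pi_{K'}$ from the preceding proposition) contained in $O_\varphi^f$.

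In Case 2 ($\psi \wedge A = \emptyset$), I would view $\psi$ as a subobject $\chi \hookrightarrow s_{i_0}\times \dots\times s_{i_{k-1}}\times s_{t_0}\times \dots\times s_{t_{r-1}}$ by permuting coordinates so that the $x$-coordinates $s_{f(0)},\dots,s_{f(k-1)}$ come first, with $s_{t_n}$ listing the remaining sorts; let $h:r\to \omega$ pick out the complementary indices in $\{0,\dots,K-1\}\setminus\{f(n) : n<k\}$. Then $\chi_{f,h}^* = \psi$ up to the permuting isomorphism, and $\exists_\pi \chi = \exists_\tau \psi$; Frobenius applied to $\psi \wedge \tau^* A_0 = \emptyset$ gives $\exists_\tau \psi \wedge A_0 = \emptyset$, and pulling back along $\tau$ (which is effective epi since each $s\to 1$ is) yields $(\exists_\pi \chi)_f^* \wedge (\exists_\pi \varphi)_f^* = \emptyset$. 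Hence $\chi$ is admissible in the definition of $O_\varphi^f$ and $\pi_K^{-1}([\psi]) \subseteq O_\varphi^f$. The main obstacle is spotting the right $\chi$ in this second case: Frobenius and Beck-Chevalley together convert the failure of existential consistency ($\psi \wedge A = \emptyset$) into the obstruction clause required by the definition of $O_\varphi^f$, so one can take $\chi = \psi$ itself, circumventing the lack of complementation in $\mathcal{C}$.
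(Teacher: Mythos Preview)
Your proof is correct and follows the paper's approach: for each $O_\varphi^f$ and each basic open $\pi_K^{-1}([\psi])$, split on whether $\psi$ meets $(\exists_\pi\varphi)_f^*$---equivalently, whether $\rho^*\psi$ meets $\varphi_{f,g}^*$ for a fresh injective $g$---producing a witness in the positive case and using $\psi$ itself (reindexed via a permutation/diagonal) as the obstruction $\chi$ in the negative case. One small correction: Isbell's theorem is not needed to conclude that $G$ is comeagre (a countable intersection of dense open sets has meagre complement by definition); the Baire property is only invoked later, in Theorem~\ref{omitting3}, to guarantee that such sets are nonempty.
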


\begin{proof}
We have to show that each $O_{\varphi }^f$ is dense.  It suffices to prove that for any non-empty basic open $\emptyset \neq [\psi ]\subseteq S_{\mathcal{C}}(s_0\times \dots s_{K-1})$:  $\pi _{K}^{-1}([\psi ])\cap O_{\varphi }^f \neq \emptyset$. We can assume $K> max\{f(n):n<k\}$ (otherwise take the preimage of $V$ in the sequence). Let $g:l\to [K,L)\subseteq \omega $ be an injective function with $sort(s_{j_n})=sort(s_{g(n)})$. If $\psi \times s_{K}\times \dots \times s_{L-1}\cap \varphi _{f,g}^* =\emptyset $ then  $\pi _{K}^{-1}([\psi ]) \subseteq O_{\varphi }^f$ (as $\psi \hookrightarrow s_{f(0)}\times \dots s_{f(k-1)} \times \prod _{f(n)\neq m <K} s_m \xhookrightarrow{\Delta } s_{i_0}\times \dots s_{i_{k-1}}\times \prod _{f(n)\neq m <K} s_m$ is one of the $\chi $'s). If not then $\emptyset \neq \pi_{L}^{-1}([\psi \times s_{K}\times \dots \times s_{L-1}\cap \varphi _{f,g}^*])\subseteq \pi _{K}^{-1}([\psi ])\cap O_{\varphi }^f$.
\end{proof}

\begin{proposition}
Let $\mathcal{C}$, $\mathbb{S}$ and $\overline{\mathbb{S}}$ be as before and let $\mathcal{D}\not \simeq *$ be $\aleph _1$-geometric. Assume that $q\in G\subseteq S_{\mathcal{C}}^{\omega }(\overline{\mathbb{S}})$ is strongly realized by a coherent functor $M:\mathcal{C}\to \mathcal{D}$, and let $b_X\hookrightarrow \prod _{s\in X}Ms$ be as in Proposition \ref{infreal}. Finally take $c_i=\exists _{p_i}b_X \hookrightarrow s_i$ (an atom) and assume that $N(s)=\bigcup _{i:sort(s_i)=sort(s)} c_i \to 1$ is effective epi for each $s\in \mathbb{S}$ (e.g.~because $\mathcal{D}$ is 2-valued). Then there is a (unique) positively closed coherent subfunctor $\alpha :N \Rightarrow M$ with component $N(s)\hookrightarrow M(s)$ at $s\in \mathbb{S}$.
\label{ngeom}
\end{proposition}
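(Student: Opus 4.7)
My plan is to verify the hypothesis of Proposition~\ref{26} for the family $(\alpha_s\colon N(s)\hookrightarrow M(s))_{s\in\mathbb{S}}$. Concretely, for each subobject $\varphi\hookrightarrow s_0\times\dots\times s_{n-1}\times s_0'\times\dots\times s_{m-1}'$ in $\mathcal{C}$ I must show
\[
N\vec{s}'\;\leq\;\exists_\pi N\varphi\;\cup\;\bigcup_{\chi\cap\exists_\pi\varphi=\emptyset}M\chi
\]
inside $\mathrm{Sub}_{\mathcal{D}}(M\vec{s}')$. Since $\mathcal{D}$ is $\aleph_1$-geometric and each $N(s_j')=\bigcup_i c_i$ is a countable union, finite products distribute over these unions, yielding $N\vec{s}'=\bigcup_{(i_0,\dots,i_{m-1})}(c_{i_0}\times\dots\times c_{i_{m-1}})$. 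So it is enough to bound each summand, indexed by a sort-respecting tuple $I=(i_0,\dots,i_{m-1})$.

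For a fixed $I$ I would take $f\colon m\to\omega$ with $f(j)=i_j$ and, after swapping the ``fixed'' and ``quantified'' blocks so that $\vec{s}'$ appears first, invoke $q\in O_\varphi^f$. This supplies either (a) a sort-respecting $g\colon n\to\omega$ and sufficiently large $K$ with $\varphi_{f,g}^*\in q_K$, or (b) a subobject $\chi\hookrightarrow\vec{s}'\times\vec{t}$ with $\exists_{\vec{t}}\chi\cap\exists_\pi\varphi=\emptyset$ in $\mathcal{C}$ (the disjointness descending from the pullback because the hypothesis ``$s\to 1$ effective epi'' makes the projection $\pi_f$ effective epi) together with $h$ such that $\chi_{f,h}^*\in q_K$. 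Strong realization---$\psi\in q_K\Leftrightarrow M\psi\cap b_{X_K}\neq\emptyset$ combined with $b_{X_K}$ being an atom---upgrades each such membership to the containment $b_{X_K}\leq M\psi$. Projecting along the $(f,g)$- or $(f,h)$-coordinates and using the compatibility of atomic projections from the family $b_X$, Case (a) yields $b_J\leq M\varphi$ for $J=\{i_0,\dots,i_{m-1},g(0),\dots,g(n-1)\}$; since $b_J\leq\prod_{k\in J}c_k\leq N\vec{s}\times N\vec{s}'$ we get $b_J\leq N\varphi$, and finally $b_I\leq\exists_\pi N\varphi$. Case (b) gives analogously $b_I\leq M(\exists_{\vec{t}}\chi)$, and $\exists_{\vec{t}}\chi\hookrightarrow\vec{s}'$ is one of the admissible witnesses on the right-hand side.

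What remains---and I expect this to be the main obstacle---is passing from the containment for the atom $b_I$ to the full product $c_{i_0}\times\dots\times c_{i_{m-1}}$. In $\mathbf{Set}$ the two coincide (both being the singleton $(a_{i_0},\dots,a_{i_{m-1}})$), but in a general $\aleph_1$-geometric coherent category the product of atoms need not be atomic. The plan is to test $\prod_j c_{i_j}$ via generalized elements and pull back the cover of $M\vec{s}'$ produced above; the compatibility of the $b_X$'s together with the effective-epi hypothesis $N(s)\to 1$ should ensure that every generalized element of $\prod_j c_{i_j}$ factors through some atom $b_J$ (for a suitable enlargement $J\supseteq I$) already known to lie in the right-hand side. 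Once this final cover is secured, Proposition~\ref{26} delivers the unique positively closed coherent subfunctor $\alpha\colon N\Rightarrow M$ with the prescribed components at $s\in\mathbb{S}$.
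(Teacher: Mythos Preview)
Your overall strategy coincides with the paper's: reduce to Proposition~\ref{26}, decompose the parameter side as a countable union $\bigcup_f\prod_n c_{f(n)}$, and for each summand invoke $q\in O^f_\varphi$. The paper does not perform your explicit block-swap; it instead silently relabels so that Prop.~\ref{26}'s parameter block becomes $\vec{s_i}$ (the displayed inequality at the start of the paper's proof has a typo---the left side should read $N(\vec{s_i})$, as the $k=0$ reduction below it confirms).

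The obstacle you isolate---passing from the atom $b_I$ to the full product $c_{i_0}\times\dots\times c_{i_{m-1}}$---is genuine, and the paper does not treat it more carefully than you do. From $b_K\leq M(\varphi^*_{f,g})$ what follows is that the \emph{image} $\exists_{\langle\pi_{f(0)},\dots,\pi_{f(k-1)}\rangle}(b_K)$ lies below the right-hand side; this image is again an atom (images of atoms are atoms in any regular category, by Frobenius) and sits inside $\prod_n c_{f(n)}$, but for parameter blocks of length $\geq 2$ it can be strictly smaller. The paper simply asserts ``therefore $\prod_{m\in\mathrm{im}(f)}c_m\leq\exists_{M\pi}(\dots)$'' at this point. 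When $k=1$ the two coincide by the very definition $c_m=\exists_{p_m}b_K$, and the paper's $k=0$ case reduces to $k=1$; for $k\geq 2$ neither your generalized-elements sketch nor the paper's bare assertion supplies the missing step. The gap closes if finite products of atoms in $\mathcal{D}$ are atoms (equivalently $b_X=\prod_{i\in X}c_i$), which holds in $\mathbf{Set}$ and in the setting of the downstream Theorem~\ref{omitting3}, but is not guaranteed by the stated hypotheses on $\mathcal{D}$.

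One smaller point on case~(b): the disjointness coming from $O^f_\varphi$ is $(\exists_\pi\chi)^*_f\cap(\exists_\pi\varphi)^*_f=\emptyset$ inside $\prod_{n<K}s_n$, and this descends to $\exists_\pi\chi\cap\exists_\pi\varphi=\emptyset$ only when $\langle\pi_{f(0)},\dots\rangle$ is effective epi, i.e.\ when $f$ is injective. For non-injective $f$ the paper does not rely on descent but instead replaces the obstruction by $\widetilde{\chi}\cap\Delta$ (pullback along the diagonal $\prod_{m\in\mathrm{im}(f)}s_m\hookrightarrow s_{f(0)}\times\dots\times s_{f(k-1)}$); you should follow the same device rather than assume $\pi_f$ is effective epi.
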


\begin{proof}
By pullback-stability we have 
\begin{multline*}
N(s)\times N(s')=(\bigcup _{sort(s_i)=sort(s)}c_i)\times (\bigcup _{sort(s_j)=sort(s')}c_j)=\\ (M(s)\times \bigcup _{j}c_j) \cap (\bigcup _{i}c_i \times M(s')) = \bigcup _{j}(M(s)\times c_j) \cap \bigcup _{i}(c_i \times M(s')) = \bigcup _{i,j}c_i\times c_j
\end{multline*}
and more generally $N(s_{j_1})\times \dots \times N(s_{j_k})=\bigcup _{i_1\dots i_k: sort(s_{i_n})=sort(s_{j_n})} c_{i_1}\times \dots \times c_{i_k}$ for $k\geq 1$.

By Proposition \ref{26} we have to prove 
\begin{multline*}
N(s_{j_0})\times \dots N(s_{j_{l-1}})\leq \exists _{M\pi }(M(\varphi )\cap N(\vec{s_{i}})\times N(\vec{s_{j}})) \cup \bigcup _{\substack{\chi \hookrightarrow s_{j_0}\times \dots s_{j_{l-1}}\\ \chi  \cap \exists _{\pi }\varphi =\emptyset }} M(\chi )
\end{multline*}
for $\varphi \hookrightarrow s_{i_0}\times \dots s_{i_{k-1}}\times s_{j_0}\times \dots s_{j_{l-1}}$ and $k,l\geq 0$.

First assume $k>0, l\geq 0$. Fix $\varphi $ and $f:k\to \omega $ such that $sort(s_{i_n})=sort(s_{f(n)})$. Since $q\in G$ either we have $g:l\to \omega $ with $sort(s_{j_n})=sort(s_{g(n)})$ such that $\varphi ^*_{f,g} \in q_K$ or there is $\chi \hookrightarrow s_{i_0} \times \dots s_{i_{k-1}}\times s_{t_0}\times \dots s_{t_{r-1}}$ with $(\exists _{\pi }\chi)^*_f \cap (\exists _{\pi }\varphi)^*_f =\emptyset $ and there is $h:r\to \omega $ with $sort(s_{t_n})=sort(s_{h(n)})$ such that $\chi _{f,h}^*\in q_L$. In the first case $b_K\cap M(\varphi _{f,g}^*)\neq \emptyset $ hence $b_K\leq M(\varphi _{f,g}^*)$ as $b_K$ is an atom. Therefore $\prod _{\exists n: m=f(n)} c_m\leq \exists _{M\pi }(M(\varphi _{f,g}^*)\cap N(\vec{s_{f(n)}})\times N(\vec{s_{g(m)}}))$ hence $c_{f(0)}\times \dots \times c_{f(k-1)}\leq \exists _{M\pi }(M(\varphi )\cap N(\vec{s_{i}})\times N(\vec{s_{j}}))$. Similarly we get that in the second case $\prod _{\exists n: m=f(n)} c_m \leq \exists _{M\pi }M(\chi _{f,h}^*) = M((\exists _{\pi } \chi )_f^*)$. We have a projection map $\prod _{\exists n: m=f(n)} s_m\leftarrow s_{f(0)}\times \dots \times s_{f(k-1)} $. Let $\widetilde{\chi }$ be the pullback of $\exists _{\pi }\chi $ along this map. Then $c_{f(0)}\times \dots c_{f(k-1)}\leq M(\widetilde{\chi }\cap \Delta )$ and $(\widetilde{\chi }\cap \Delta )\cap \exists _{\pi }\varphi =\emptyset $ (where $\Delta$ is $\prod _{\exists n: m=f(n)}s_m\xhookrightarrow{\langle \pi _{f(0)},\dots \rangle }s_{f(0)}\times \dots s_{f(k-1)}$.

Now assume $k=0, l\geq 0$ and take $\varphi \hookrightarrow s_{j_0}\times \dots s_{j_{l-1}}$. We have to prove that 
\[
1\leq \exists _! (M\varphi \cap N\vec{s_j} ) \cup \bigcup _{\substack{\chi \hookrightarrow 1 \\ \chi \cap \exists _! \varphi =\emptyset }} M\chi
\]
As $Ns\times N\vec{s_j}\to 1$ is effective epi (for any $s$) it is enough to prove that the pullback of the inequality along this map holds, i.e.
\[
Ns\times N\vec{s_j}\leq Ns\times (M\varphi \cap N\vec{s_j}) \cup \bigcup _{\substack{\chi \hookrightarrow 1\\ \chi \cap \exists _!\varphi =\emptyset }} M(\chi \times s\times \vec{s_j})
\]
We have $\chi \cap \exists _!\varphi =\emptyset $ iff $!^{-1}\chi \cap \varphi =\emptyset $ (using $\exists _f(a\cap f^{-1}b)=\exists _fa \cap b$), and therefore this inequality follows by 
\[
Ns\times N\vec{s_j} \leq (M(s\times \varphi ) \cap Ns\times N\vec{s_j}) \cup \bigcup _{\substack{\chi '\hookrightarrow s\times \vec{s_j} \\ \chi ' \cap (s\times \varphi )=\emptyset }} M\chi ' 
\]
\end{proof}

\begin{theorem}
Let $\mathcal{C}$ be a countable coherent category with filtration $\mathbb{S}$ such that for each $s\in \mathbb{S}$: $s\to 1$ is effective epi, and let $\mathcal{D}$ be 2-valued $\aleph _1$-geometric. Let $E_X\subseteq S_{\mathcal{C}}(\prod _{s\in X}s)$ be meagre subsets (for $X\subseteq \overline{ \mathbb{S}}$ finite). Then there is a comeagre set $A\subseteq S_{\mathcal{C}}^{\omega }(\overline{\mathbb{S}})$ such that if any $q\in A$ is strongly realized by some $M:\mathcal{C}\to \mathcal{D}$ coherent functor then $M$ has a positively closed coherent subfunctor $N$ which weakly omits each type from each $E_X$.
\label{omitting}
\end{theorem}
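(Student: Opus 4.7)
The approach is to intersect the comeagre set $G$ from the preceding lemma with preimages of the meagre sets $E_X$ under the appropriate projections, and then invoke Proposition~\ref{ngeom}.

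The space $S_{\mathcal{C}}^{\omega}(\overline{\mathbb{S}})$ is the $\mathrm{Spec}$ of the filtered colimit of distributive lattices $\mathrm{colim}\,Sub_{\mathcal{C}}(\prod_{s\in X}s)$, hence a spectral space, and therefore a Baire space by Theorem~\ref{isbell}. For each finite $Y\subseteq\overline{\mathbb{S}}$, the proposition preceding this theorem shows that $\pi_Y: S_{\mathcal{C}}^{\omega}(\overline{\mathbb{S}})\to S_{\mathcal{C}}(\prod_{s\in Y}s)$ is an open continuous surjection, so preimages of nowhere dense sets are nowhere dense. For every pair of finite subsets $X,Y\subseteq\overline{\mathbb{S}}$ of the same sort profile, $\prod_X s\cong \prod_Y s$ in $\mathcal{C}$ transports $E_X$ to a meagre subset of $S_{\mathcal{C}}(\prod_Y s)$, hence $\pi_Y^{-1}(E_X)$ is meagre. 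Setting
\[
A = G \cap \bigcap_{(X,Y)\text{ sort-matched}} \bigl(S_{\mathcal{C}}^{\omega}(\overline{\mathbb{S}}) \setminus \pi_Y^{-1}(E_X)\bigr),
\]
a countable intersection of comeagre sets, yields a comeagre $A$.

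Given $q\in A$ strongly realized by $M$ with atoms $(b_X)_X$, I would apply Proposition~\ref{ngeom}: the condition $q\in G$ is its main hypothesis, and the side condition that each $N(s)=\bigcup_{sort(s_i)=sort(s)}c_i\to 1$ be effective epi follows from $\mathcal{D}$ being 2-valued (any nonzero subobject of $1$ equals $1$; and each $c_i\neq 0$ as the image of a nonzero atom under $p_i$). This produces a positively closed coherent subfunctor $\alpha:N\Rightarrow M$. For weak omission, suppose some $p\in E_X$ is strongly realized by $N$ via an atom $a\hookrightarrow N(\prod_{s\in X}s)$. Since $N$ preserves finite products, $N(\prod_X s)=\bigcup_{\vec{i}}c_{i_1}\times\dots\times c_{i_l}$ over sort-matched tuples; as $\mathcal{D}$ is $\aleph_1$-geometric, this countable union is pullback-stable, forcing $a\leq c_{i_1}\times\dots\times c_{i_l}$ for some such $\vec{i}$, with $Y=\{i_1,\dots,i_l\}$ sort-matching $X$. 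The canonical atom $b_Y$ also sits in this same $c_{i_1}\times\dots\times c_{i_l}$. Identifying the type of $a$ with $q_Y$ would give a contradiction, since $q\in A$ forces $q_Y\notin E_X$.

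\textbf{Main obstacle.} The delicate step is concluding that $a$ realizes the same type as $b_Y$. Two distinct atoms in the same subobject lattice are disjoint, and in a general $\aleph_1$-geometric $\mathcal{D}$ without weakly Boolean structure a product of atoms $c_{i_1}\times\dots\times c_{i_l}$ may contain several pairwise disjoint atoms giving distinct types. In $\mathbf{Set}$ the atoms are singletons and a product of singletons is a singleton, so $a=b_Y$ is automatic; in the general setting one should either invoke a further structural property of $\mathcal{D}$, or strengthen $A$ with countably many additional generic conditions on $q$ that preclude any rogue joint atom over $(c_{i_n})_n$ whose $M$-type lies in $E_X$. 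Such a strengthening preserves comeagreness because only countably many formulas and tuples are involved.
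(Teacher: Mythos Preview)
Your approach is the paper's approach: set $A=G\setminus\bigcup_X\pi_X^{-1}(E_X)$, invoke Proposition~\ref{ngeom} to get the positively closed $N$, and argue that a strongly realizing atom $a\leq\prod_r N(s_{i_r})$ must land in some $c_{j_0}\times\cdots\times c_{j_{m-1}}$ and hence realize $q_{\{s_{j_0},\dots,s_{j_{m-1}}\}}$. (No separate ``sort-matched'' bookkeeping is needed: since $\overline{\mathbb{S}}$ repeats every sort infinitely often, the finite subsets $X\subseteq\overline{\mathbb{S}}$ already enumerate all sort-profiles, so the paper's simpler $A$ suffices.)

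On your ``main obstacle'': the paper does not add extra generic conditions. Instead it \emph{rephrases} strong realization at the start of the proof as: there are atoms $c_i\leq M(s_i)$ with
\[
\prod_{i:s_i\in X}c_i\cap M\varphi\neq\emptyset \ \Leftrightarrow\ \prod_{i:s_i\in X}c_i\leq M\varphi \ \Leftrightarrow\ \varphi\in q_X,
\]
i.e.\ it takes the witnessing atom $b_X$ to \emph{be} $\prod_{i\in X}c_i$. Under this reading $c_{j_0}\times\cdots\times c_{j_{m-1}}$ is itself an atom (namely $b_Y$), so once $a\cap\prod c_{j_r}\neq\emptyset$ the two atoms coincide and $p=q_Y$. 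Even if you do not want to assert that the product is literally an atom, the displayed equivalences alone already force the type of any atom $a\leq\prod c_{j_r}$ to equal $q_Y$: from $\varphi\in q_Y$ we get $\prod c_{j_r}\leq M\varphi$ hence $a\leq M\varphi$; from $\varphi\notin q_Y$ we get $\prod c_{j_r}\cap M\varphi=\emptyset$ hence $a\cap M\varphi=\emptyset$. So your proposed fix (strengthening $A$ with further countable conditions) is unnecessary; the obstacle dissolves once one reads ``strongly realized'' as the paper does in this proof.
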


\begin{proof}
Take $A=G\setminus \bigcup _X \pi _{X}^{-1}(E_X)$. As the preimage of a meagre set at an open map is meagre we have that $A$ is comeagre. Assume that $q\in A$ is strongly realized by $M:\mathcal{C}\to \mathcal{D}$, i.e.~there are atomic subobjects $c_i\leq M(s_i)$ such that for $\varphi \hookrightarrow \prod _{s_i\in X}s_i$ we have $\prod _{i:s_i\in X}c_i\cap M(\varphi )\neq \emptyset \Leftrightarrow \prod _{i:s_i\in X}c_i \leq M(\varphi ) \Leftrightarrow \varphi \in q_X$ and let $(N(s))_{s\in \mathbb{S}}$ be as before. By the previous proposition it extends to a positively closed coherent subfunctor $N:\mathcal{C}\to \mathcal{D}$.

Assume that $N$ realizes an $X=\{s_{i_0},\dots s_{i_{m-1}}\}$-type $p\in E_X$. That is, there is an atom $a\leq \prod _{r<m} N(s_{i_r})$ such that $a\cap N\varphi \neq \emptyset $ iff $\varphi \in p$. As $\prod _{r<m}N(s_{i_r})=\bigcup _{\substack{j_0,\dots j_{m-1}\\ sort(s_{j_r})=sort(s_{i_r})}} c_{j_0}\times \dots c_{j_{m-1}}$ there is $j_0,\dots j_{m-1}$ such that $a\cap c_{j_0}\times \dots \times c_{j_{m-1}}\neq \emptyset$. Then the two atoms must coincide hence $p=\pi _{\{s_{j_0},\dots s_{j_{m-1}}\}}(q)$ which is a contradiction.
\end{proof}

One of our next goals is to give examples of pairs $(\mathcal{C},\mathcal{D})$ such that small sets of $\mathcal{C}$-types can be omitted by $\mathcal{D}$-models. Since in the $\mathbf{Set}$-case the realization of $\omega $-types is guaranteed by the compactness theorem, we study ultraproducts.

\section{Ultraproducts}

\begin{definition}
Let $\mathcal{D}$ be a category with products and filtered colimits. Let $(D_i)_{i\in I}$ be a set of objects and $U\subseteq \mathcal{P}(I)$ be an ultrafilter. The ultraproduct of $D_i$'s wrt.~$U$ is $\faktor{\prod _{i\in I} D_i}{U}=colim_{J\in U} \prod _{i\in J} D_i$. (The colimit is taken over the projection maps. The diagram is filtered as $U$ is closed under intersections.)
\end{definition}

The following is the generalization of Theorem 2.1.1 in \cite{ultracat}:

\begin{theorem}[Łoś-lemma]
\label{los}
Let $\kappa $ and $\lambda $ be regular cardinals such that $\gamma <\kappa \Rightarrow 2^{\gamma }<\lambda $ and let $\mathcal{D}$ be a $\lambda $-geometric category such that
\begin{enumerate}
    \item it has $<\kappa $ products,
    \item for any object $x$: in $Sub_{\mathcal{D}}(x)$ the $<\kappa $ meets distribute over binary unions,
    \item if $(x_i\xrightarrow{f_i} y_i)_{i<\gamma <\kappa } $ are effective epimorphisms then $\prod _{i } x_i \xrightarrow{\prod _{i} f_i} \prod _{i} y_i$ is an effective epimorphism,
    \item the $<\kappa $-product of non-initial objects is non-initial,
    \item it has filtered colimits of size $<\lambda $, which commute with finite limits.
    \item it is weakly Boolean
\end{enumerate}
Then $\mathbf{Coh}(\mathcal{C},\mathcal{D})$ is closed under $<\kappa $ ultraproducts in $\mathcal{D}^{\mathcal{C}}$ for any coherent category $\mathcal{C}$.

Moreover given $A_i\hookrightarrow X_i\hookleftarrow B_i$ in $\mathcal{D}$:
\[ 
\faktor{\prod _I A_i }{U}\subseteq \faktor{\prod _I B_i }{U} \text{ iff }\{i: A_i\subseteq B_i\}\in U.
\]
\end{theorem}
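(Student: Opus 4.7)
The plan is threefold: first show that the pointwise product $P=\prod_{i\in I}M_i$ is a coherent functor, second show that the filtered colimit presenting the ultraproduct preserves coherence, and third deduce the ``moreover'' clause from weak Booleanness by the usual ultrafilter dichotomy.

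For the first step, define $P(x)=\prod_i M_i(x)$. Preservation of finite limits is immediate since products commute with finite limits. The subtle points are binary unions and image factorizations. Write $\varphi\cup\psi$ as the image of $\varphi\sqcup\psi\twoheadrightarrow x$; applying condition (3) stage-wise gives $P(\varphi)\sqcup P(\psi)\twoheadrightarrow P(\varphi\cup\psi)$, and its image in $P(x)$ is computed to equal $P(\varphi)\cup P(\psi)$ by invoking condition (2) to distribute the $<\kappa$ meets defining the product over the union. Preservation of effective epimorphisms is a direct consequence of (3).

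For the second step, set $M(x)=\operatorname{colim}_{J\in U}\prod_{i\in J}M_i(x)$. The indexing poset $U$ (ordered by reverse inclusion) is filtered of size $|U|\leq 2^{|I|}<\lambda$ by the cardinal hypothesis, so this colimit exists in the $\lambda$-geometric category $\mathcal{D}$ and, by (5), commutes with finite limits; thus $M$ preserves finite limits. Binary unions and effective epimorphisms are preserved because each stage-functor $\prod_{i\in J}M_i$ preserves them by the first step, and unions and image factorizations are pullback-stable colimits preserved by $<\lambda$-filtered colimits in a $\lambda$-geometric category. Hence $M\in\mathbf{Coh}(\mathcal{C},\mathcal{D})$.

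For the third step, set $J=\{i:A_i\subseteq B_i\}$. If $J\in U$, then $\{J'\in U:J'\subseteq J\}$ is cofinal in $U$, and at each such stage the inclusion $\prod_{J'}A_i\subseteq \prod_{J'}B_i$ holds, hence passes to the colimit. Conversely, if $J\notin U$ then $K=I\setminus J\in U$; weak Booleanness supplies, for each $i\in K$, a subobject $\emptyset\neq u_i\leq A_i$ with $u_i\cap B_i=\emptyset$. Setting $u_i=\emptyset$ for $i\notin K$ and forming the ultraproduct $\faktor{\prod u_i}{U}$, cofinality of $\{J'\subseteq K\}$ together with (4) gives that $\faktor{\prod u_i}{U}$ is non-initial; meanwhile its intersection with $\faktor{\prod B_i}{U}$ is the filtered colimit of $\prod_{J'}(u_i\cap B_i)=\emptyset$ (since one factor is initial), hence initial by (5), contradicting the assumed inclusion. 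I expect the main obstacles to be the verification in the second step that $<\lambda$-filtered colimits genuinely preserve unions and effective epis (ensuring the step-functors' coherence lifts), and the careful bookkeeping in the third step that non-initiality and disjointness survive the passage to the ultraproduct, both resting on the interplay of conditions (4), (5) and the ambient $\lambda$-geometric structure.
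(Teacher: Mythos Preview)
Your third step (the ``moreover'' clause) is essentially correct and matches the paper's argument. The genuine gap is in your first step: the product functor $P=\prod_{i\in J}M_i$ does \emph{not} preserve binary unions, so your decomposition ``first show the product is coherent, then take the filtered colimit'' cannot work as stated.

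Concretely, condition (2) gives you
\[
\prod_{i\in J}\bigl(M_i(\varphi)\cup M_i(\psi)\bigr)=\bigcup_{\varepsilon\in\{\varphi,\psi\}^J}\ \prod_{i\in J}M_i(\varepsilon(i)),
\]
a union over $2^{|J|}$ choice functions, not just the two constant ones. In $\mathbf{Set}$ already, with $|J|=2$ and $M_0(\varphi)=M_1(\varphi)=\{0\}$, $M_0(\psi)=M_1(\psi)=\{1\}$ inside $\{0,1\}$, one has $P(\varphi\cup\psi)=\{0,1\}^2$ while $P(\varphi)\cup P(\psi)=\{(0,0),(1,1)\}$. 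Your sentence ``applying condition (3) stage-wise gives $P(\varphi)\sqcup P(\psi)\twoheadrightarrow P(\varphi\cup\psi)$'' silently identifies $\prod_i(M_i(\varphi)\sqcup M_i(\psi))$ with $P(\varphi)\sqcup P(\psi)$, which is false. Since your second step relies on ``each stage-functor preserves unions by the first step'', that step collapses as well.

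The paper's fix is to argue preservation of unions \emph{only at the ultraproduct level}, where the ultrafilter is available. One takes a non-empty $v\hookrightarrow\faktor{\prod_I F_i(a\cup b)}{U}$, pulls it back to a non-empty $w$ at some stage $\prod_J(F_i(a)\cup F_i(b))$, uses the distributivity above to find a single choice function $\varepsilon$ with $w\cap\prod_J F_i(\varepsilon(i))\neq\emptyset$, and then uses that $U$ is an ultrafilter to find $J'\subseteq J$ in $U$ on which $\varepsilon$ is constant (say $=a$); this exhibits a non-empty piece factoring through $\faktor{\prod_I F_i(a)}{U}$. Weak Booleanness then converts ``the image of $f$ meets every non-empty subobject'' into ``$f$ is effective epi''. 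The ultrafilter is thus essential for the union step, not merely for packaging the colimit.
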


\begin{proof}
We have to prove that $\faktor{\prod _I F_i}{U}$ is coherent when each $F_i: \mathcal{C}\to \mathcal{D}$ is. $\prod _J F_i$ preserves finite limits, the initial object (in a coherent category any map $x\to \emptyset $ is an isomorphism as the subobject $\emptyset \hookrightarrow \emptyset $ is pullback-stable, in particular $\prod _J \emptyset \xrightarrow{\pi _1} \emptyset $ is iso), and effective epimorphisms, by assumption. As filtered colimits commute with finite limits and all colimits, $\faktor{\prod _i F_i }{U}$ preserves finite limits, the initial object and effective epimorphisms.

It remains to prove that $\faktor{\prod _i F_i}{U}$ preserves binary unions $a\cup b\hookrightarrow x$, i.e.~that the induced map $f$ in
\[
\adjustbox{scale=0.9}{
\begin{tikzcd}
	{\faktor{\prod _i F_i(a)}{U}} & {\faktor{\prod _i F_i(a)}{U}\cup \faktor{\prod _i F_i(b)}{U}} & {\faktor{\prod _i F_i(b)}{U}} \\
	& {\faktor{\prod _i F_i(a\cup b)}{U}} \\
	& {\faktor{\prod _i F_i(x)}{U}}
	\arrow[curve={height=18pt}, hook, from=1-1, to=3-2]
	\arrow[curve={height=-18pt}, hook', from=1-3, to=3-2]
	\arrow[hook', from=1-3, to=1-2]
	\arrow[hook, from=1-1, to=1-2]
	\arrow["f", dashed, hook', from=1-2, to=2-2]
	\arrow[hook', from=2-2, to=3-2]
\end{tikzcd}
}
\]
is surjective. Take $\emptyset \neq v \hookrightarrow \faktor{\prod _i F_i(a\cup b)}{U}$. For some ultra-set $J\in U$: $v$ intersects the image of the cocone map $\prod _J F_i(a\cup b)\to \faktor{\prod _I F_i(a\cup b)}{U}$ as pullbacks and filtered colimits commute. Let $\emptyset \neq w \hookrightarrow \prod _J F_i(a)\cup F_i(b)$ be a non-empty subobject in the preimage of this intersection. As 

\begin{multline*}
    \prod _J F_i(a)\cup F_i(b) =\bigcap _J F_0(x)\times \dots \times (F_i(a)\cup F_i(b))\times F_{i+1}(x)\times \dots =\\
 \bigcap _J [F_0(x)\times \dots \times F_i(a) \times F_{i+1}(x)\times \dots \ \cup \ F_0(x)\times \dots \times F_i(b)\times F_{i+1}(x)\times \dots  ] =\\
\bigcup _{\varepsilon (i)\in \{a,b\}} \bigcap _{J} F_0(x)\times \dots \times F_i(\varepsilon (i))\times F_{i+1}(x)\times \dots = \bigcup _{\varepsilon } \prod _{J} F_i(\varepsilon (i))
\end{multline*}
(using that in $Sub_{\mathcal{D}}(\prod _J F_i(x))$ the $<\kappa $ meets distribute over binary unions), for some choice-function $\varepsilon$: $w'=w\cap \prod _J F_i(\varepsilon (i))$ is non-empty. But for some ultraset $J'\subseteq J$: $\varepsilon (i)$ is constant, let's say it equals $a$. The commutativity of
\[
\adjustbox{scale=0.8}{
\begin{tikzcd}
	{\prod _{J'}F_i(a)} && {\faktor{\prod _I F_i(a)}{U}} && {\faktor{\prod _I F_i(a)}{U} \cup \faktor{\prod _I F_i(b)}{U}} \\
	&& {\prod _{J'}F_i(a)\cup F_i(b)} \\
	{\prod _J F_i(\varepsilon (i))} && {\prod _JF_i(a)\cup F_i(b)} && {\faktor{\prod _I F_i(a)\cup F_i(b)}{U}} \\
	\\
	{\emptyset \neq w'} && {\emptyset \neq w} && {\emptyset \neq v}
	\arrow[hook, from=5-5, to=3-5]
	\arrow[from=3-3, to=3-5]
	\arrow[hook, from=5-3, to=3-3]
	\arrow[from=5-3, to=5-5]
	\arrow[hook, from=3-1, to=3-3]
	\arrow[hook, from=5-1, to=3-1]
	\arrow[hook, from=5-1, to=5-3]
	\arrow[from=3-1, to=1-1]
	\arrow[from=1-1, to=1-3]
	\arrow[hook, from=1-3, to=1-5]
	\arrow["f", dashed, hook', from=1-5, to=3-5]
	\arrow[hook, from=1-1, to=2-3]
	\arrow[from=3-3, to=2-3]
	\arrow[from=2-3, to=3-5]
	\arrow[dashed, hook, from=1-3, to=3-5]
\end{tikzcd}
}
\]
implies that the image of $f$ intersects $v$, i.e.~it intersects every non-empty subobject of the codomain. As $\mathcal{D}$ was weakly Boolean, we get that $f$ is surjective.

For the second part first assume $J=\{i:A_i\subseteq B_i\}\in U$. Then we get a commutative triangle
\[\begin{tikzcd}
	{\prod _{J'} A_i} \\
	&& {\prod _{J'}X_i} \\
	{\prod _{J'}B_i}
	\arrow["{\prod _{J'}f_i}", hook, from=1-1, to=2-3]
	\arrow["{\prod _{J'}g_i}"', hook, from=3-1, to=2-3]
	\arrow["{\prod _{J'}h_i}"', from=1-1, to=3-1]
\end{tikzcd}\]
for each $U\ni J'\subseteq J$. As such ultrasets with reversed inclusion form a cofinal subdiagram of all ultrasets (with reversed inclusion), this induces the required map $\faktor{\prod _I A_i}{U}\to \faktor{ \prod _i B_i}{U}$. Conversely assume $\faktor{\prod _I A_i}{U}\subseteq \faktor{ \prod _I B_i}{U}$. If $\{i:A_i\subseteq B_i\}\not \in U$ then $J=\{i:\exists \  \emptyset \neq v_i\hookrightarrow B_i: v_i\cap A_i \cap B_i=\emptyset \}\in U$ (as $\mathcal{D}$ is weakly Boolean). For each $U\ni J'\subseteq J$ we get
\[\begin{tikzcd}
	{\emptyset \neq \prod _{J'} v_i} && {\prod _{J'} B_i} \\
	{\emptyset } && {\prod _{J'}A_i\cap B_i}
	\arrow[hook, from=1-1, to=1-3]
	\arrow[""{name=0, anchor=center, inner sep=0}, hook, from=2-3, to=1-3]
	\arrow[""{name=1, anchor=center, inner sep=0}, hook, from=2-1, to=1-1]
	\arrow[hook, from=2-1, to=2-3]
	\arrow["pb"{description}, draw=none, from=1, to=0]
\end{tikzcd}\]
therefore (cofinality + every $\bullet \to \emptyset $ map is iso + filtered colimits commute with pullbacks):

\[\begin{tikzcd}
	{\emptyset \neq \faktor{\prod _{J} v_i}{U}} && {\faktor{\prod _{J} B_i}{U}=\faktor{\prod _{I} B_i}{U}} \\
	{\emptyset } && {\faktor{\prod _{J} A_i\cup B_i}{U}=\faktor{\prod _{I} A_i\cup B_i}{U}}
	\arrow[hook, from=1-1, to=1-3]
	\arrow[""{name=0, anchor=center, inner sep=0}, "\cong"', hook, from=2-3, to=1-3]
	\arrow[""{name=1, anchor=center, inner sep=0}, hook, from=2-1, to=1-1]
	\arrow[hook, from=2-1, to=2-3]
	\arrow["pb"{description}, draw=none, from=1, to=0]
\end{tikzcd}\]
contradiction.
\end{proof}

\begin{remark}
In the list of assumptions we could change $4.$ to
\begin{itemize}
    \item[$4'.$] $\{0\}\subseteq Sub_{\mathcal{D}}(1)$ is a $\kappa $-prime ideal. 
\end{itemize}
Indeed, given objects $(x_i)_{i<\gamma <\kappa }$ we can take the effective epi-mono factorizations $x_i\twoheadrightarrow u_i\hookrightarrow 1$, and as $x_i\neq 0$, also $u_i\neq 0$. By $3.$ map $\prod _i x_i \to \prod _i u_i$ is effective epi and the codomain is $\bigcap _i u_i$ which is not $0$ by our assumption.
\end{remark}

\begin{theorem}
Let $\mathcal{D}$ be as above. The diagonal map $\Delta :M\Rightarrow \faktor{M^I}{U}$ is an elementary natural transformation.
\end{theorem}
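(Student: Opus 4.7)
The plan is to verify the pullback condition of Definition \ref{elementarynattr}. For each monomorphism $i:a\hookrightarrow x$ of $\mathcal{C}$ the naturality square of $\Delta $ at $i$ commutes automatically. By Theorem \ref{los} the functor $\faktor{M^I}{U}$ is coherent, so it preserves $i$, making the bottom edge a monomorphism. Let $P\hookrightarrow M(x)$ denote the pullback of $\faktor{M^I}{U}(a)\hookrightarrow \faktor{M^I}{U}(x)$ along $\Delta _x$; the commuting square provides $M(a)\leq P$ in $Sub_{\mathcal{D}}(M(x))$, and the goal reduces to showing $P\leq M(a)$.

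Suppose not. By weak Booleanness (hypothesis $6$) pick $\emptyset \neq u\leq P$ with $u\cap M(a)=\emptyset $. From $u\leq P$, the composite $u\hookrightarrow M(x)\xrightarrow{\Delta _x}\faktor{M^I}{U}(x)$ factors through $\faktor{M^I}{U}(a)=\mathrm{colim}_{J\in U}\prod _{J} M(a)$. By the functoriality of the diagonal, the same composite agrees with $u\xrightarrow{\Delta }\prod _{I}u\to \mathrm{colim}_{J\in U}\prod _{J}u\hookrightarrow \faktor{M^I}{U}(x)$, so it also factors through the ``constant ultrapower'' subobject $\mathrm{colim}_{J}\prod _{J}u$.

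The image of $u\to \faktor{M^I}{U}(x)$ therefore lies in the meet of these two subobjects. Using hypothesis $5$ (filtered colimits of size $<\lambda $ commute with finite limits) and the fact that $<\kappa $ products commute with pullbacks, this meet computes as
\[
\mathrm{colim}_{J}\bigl(\textstyle\prod _{J}u\ \cap\ \prod _{J}M(a)\bigr)=\mathrm{colim}_{J}\prod _{J}(u\cap M(a))=\mathrm{colim}_{J}\prod _{J}\emptyset =\emptyset ,
\]
where the last equality uses strictness of the initial object together with the nonemptyness of each $J\in U$. Hence $u\to \faktor{M^I}{U}(x)$ factors through $\emptyset $, forcing $u\cong \emptyset $ by strictness---contradicting $u\neq \emptyset $.

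The main work sits in the last paragraph: verifying that the composite $u\to \faktor{M^I}{U}(x)$ genuinely factors through the constant ultrapower $\mathrm{colim}_{J}\prod _{J}u$, and then justifying the displayed computation of the intersection inside $Sub_{\mathcal{D}}(\faktor{M^I}{U}(x))$. Morally this is an intersection-valued strengthening of the ``moreover'' clause of Theorem \ref{los}, leaning on hypotheses $1$, $5$ and strictness of $\emptyset $; the remainder of the argument is formal.
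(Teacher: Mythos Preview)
Your proof is correct and takes a cleaner route than the paper's. The paper argues more explicitly at the level of the colimit presentation: after producing the witness $v\hookrightarrow R$ disjoint from $Ma$, it pulls $v$ back to a finite stage $\prod _J Ma$, then runs an equalizer argument (using that filtered colimits commute with equalizers) to manufacture a map from a non-initial object into $Ma$ that contradicts disjointness. You bypass this chase by noting that $\Delta _x$ restricted to $u$ factors through the ``constant ultrapower'' subobject $\mathrm{colim}_J\prod _Ju\hookrightarrow \faktor{M^I}{U}(x)$, and then computing the meet with $\faktor{M^I}{U}(a)$ levelwise via hypothesis~5 and the fact that products commute with pullbacks. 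This makes the role of each hypothesis more transparent and avoids the paper's equalizer diagrams entirely. Two minor points worth tightening: write the diagonal as $u\xrightarrow{\delta}\prod _Ju$ followed by the cocone map (rather than through $\prod _Iu$), since that is how $\Delta _x$ is actually defined; and spell out in one line the naturality square $\delta _{Mx}\circ (u\hookrightarrow Mx)=(\prod _J(u\hookrightarrow Mx))\circ \delta _u$ that gives the factorization you flag as the main step.
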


\begin{proof}
It is a natural transformation because 
\[\begin{tikzcd}
	Mx && My \\
	{Mx^J} && {My^J} \\
	{\faktor{Mx^I}{U}} && {\faktor{My^I}{U}}
	\arrow["Mf", from=1-1, to=1-3]
	\arrow["\delta"', from=1-1, to=2-1]
	\arrow["\delta", from=1-3, to=2-3]
	\arrow["{(\psi _J)_x}"', from=2-1, to=3-1]
	\arrow["{Mf^I}"', from=2-1, to=2-3]
	\arrow["{(\psi _J)_y}", from=2-3, to=3-3]
	\arrow["{\faktor{Mf^I}{U}}"', dashed, from=3-1, to=3-3]
\end{tikzcd}\]
commutes. Note that the maps $\Delta _x$ are monomorphisms as filtered colimits commute with monomorphisms:

\[
\adjustbox{scale=0.9}{
\begin{tikzcd}
	Mx && Mx & \dots & Mx \\
	\\
	{Mx^J} && {Mx^{J'}} & \dots & {\faktor{Mx^I}{U}}
	\arrow["\delta"', hook', from=1-1, to=3-1]
	\arrow[Rightarrow, no head, from=1-1, to=1-3]
	\arrow["\delta", hook', from=1-3, to=3-3]
	\arrow[from=3-1, to=3-3]
	\arrow[Rightarrow, no head, from=1-3, to=1-4]
	\arrow[from=3-3, to=3-4]
	\arrow["{\Delta _x}", hook', from=1-5, to=3-5]
\end{tikzcd}
}
\]

Given a monomorphism $a\hookrightarrow x$ assume that the induced map in 
\[
\adjustbox{scale=0.9}{
\begin{tikzcd}
	Ma &&& Mx \\
	& R & pb \\
	{\faktor{Ma^I}{U}} &&& {\faktor{Mx^I}{U}}
	\arrow["Mf", hook, from=1-1, to=1-4]
	\arrow["{\faktor{Mf^I}{U}}", hook, from=3-1, to=3-4]
	\arrow["{\Delta _x}"{description}, hook, from=1-4, to=3-4]
	\arrow["{\Delta _a}"{description}, from=1-1, to=3-1]
	\arrow[hook', from=2-2, to=3-1]
	\arrow[hook, from=2-2, to=1-4]
	\arrow[dashed, hook, from=1-1, to=2-2]
\end{tikzcd}
}
\]
is not effective epi. As $\mathcal{D}$ is weakly Boolean there is a subobject $\emptyset \neq v\hookrightarrow R$ which is disjoint from $Ma$. 

There is $J\in U$ such that $v^*$, the pullback of $v\to \faktor{Ma^I}{U}$ along the cocone map $Ma^J\to \faktor{Ma^I}{U}$ is non-empty:

\[\begin{tikzcd}
	{\emptyset \neq v^*} && {Ma^J} \\
	v && {\faktor{Ma^{I}}{U}} \\
	Mx && {\faktor{Mx^I}{U}}
	\arrow[from=1-1, to=1-3]
	\arrow[from=1-1, to=2-1]
	\arrow[from=2-1, to=2-3]
	\arrow[from=1-3, to=2-3]
	\arrow[from=2-1, to=3-1]
	\arrow[from=3-1, to=3-3]
	\arrow[from=2-3, to=3-3]
\end{tikzcd}\]
(And here the image of $v^*\to v\hookrightarrow \faktor{Mx^I}{U}$ is disjoint from $Ma\xhookrightarrow{\Delta _a} \faktor{Ma^I}{U}$.)

From this we get a diagram
\[
\adjustbox{scale=0.8}{
\begin{tikzcd}
	{v^*} && {Ma^J} && {\faktor{Ma^I}{U}} \\
	& {?} && {=} \\
	Mx && {Mx^J} && {\faktor{Mx^I}{U}}
	\arrow[from=1-1, to=3-1]
	\arrow[from=1-1, to=1-3]
	\arrow["{Mf^J}"{description}, from=1-3, to=3-3]
	\arrow["\delta"{description}, from=3-1, to=3-3]
	\arrow[from=3-3, to=3-5]
	\arrow[from=1-3, to=1-5]
	\arrow["{\faktor{Mf^I}{U}}"{description}, from=1-5, to=3-5]
\end{tikzcd}
}
\]
where the rectangle and the right square commutes. If the equalizer of the two $v^*\to Mx^J$ maps is non-empty we replace $v^*$ with that. Otherwise there should be $J\supseteq J'\in U$ such that the equalizer of the two maps post-composed with the projection is non-empty. This is because filtered colimits and equalizers commute:

\[\begin{tikzcd}
	\emptyset && {v^*} && {Mx^{J}} \\
	\emptyset && {v^{*}} && {Mx^{J'}} \\
	&& \dots && \dots \\
	\emptyset && {v^{*}} && {\faktor{Mx^I}{U}}
	\arrow["eq", hook, from=1-1, to=1-3]
	\arrow[curve={height=-6pt}, from=1-3, to=1-5]
	\arrow[curve={height=6pt}, from=1-3, to=1-5]
	\arrow[from=1-5, to=2-5]
	\arrow[Rightarrow, no head, from=1-3, to=2-3]
	\arrow[curve={height=-6pt}, from=2-3, to=2-5]
	\arrow[curve={height=6pt}, from=2-3, to=2-5]
	\arrow[from=2-5, to=3-5]
	\arrow[Rightarrow, no head, from=2-3, to=3-3]
	\arrow["eq", hook, from=2-1, to=2-3]
	\arrow[curve={height=-6pt}, from=4-3, to=4-5]
	\arrow[curve={height=6pt}, from=4-3, to=4-5]
	\arrow["eq"', hook, from=4-1, to=4-3]
\end{tikzcd}\]
So now we have a commutative outer square in
\[\begin{tikzcd}
	{\emptyset \neq v^{**}} & {v^*} && Mx \\
	{v^*} & Ma & pb \\
	{Ma^{J'}} &&& {Mx^{J'}}
	\arrow["e", hook, from=1-1, to=1-2]
	\arrow["e"', hook', from=1-1, to=2-1]
	\arrow[from=1-2, to=1-4]
	\arrow["\delta", from=1-4, to=3-4]
	\arrow[from=2-1, to=3-1]
	\arrow["{Mf^{J'}}"', from=3-1, to=3-4]
	\arrow["Mf"{description}, from=2-2, to=1-4]
	\arrow["\delta"{description}, from=2-2, to=3-1]
	\arrow[dashed, from=1-1, to=2-2]
\end{tikzcd}\]
for some $\emptyset \neq v^{**}\hookrightarrow v^*$ and $J\supseteq J'\in U$. But as the inner square is a pullback we get the induced map. Therefore the images of $v^*\to Mx\to Mx^{J'}\to \faktor{Mx^I}{U}$ and $Ma\xrightarrow{\delta }Ma^{J'}\to Mx^{J'}\to \faktor{Mx^I}{U}$ are not disjoint which is a contradiction.
\end{proof}

As usual Łoś-lemma implies compactness:

\begin{theorem}
\label{compactnesslos}
Let $\mathcal{D}$ be as above. Let $L$ be a signature with $|L|\cdot \aleph _0 <\kappa $ and $T\subseteq L_{\omega \omega }^g$ be a theory. If each finite subset $S\subseteq T$ has a model in $\mathcal{D}$ then $T$ has a model in $\mathcal{D}$.
\end{theorem}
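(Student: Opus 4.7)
The plan is a standard compactness argument using the already-established Łoś-lemma. Identify $L$-structures in $\mathcal{D}$ with coherent functors out of a common syntactic category $\mathcal{C}_L$ (the syntactic category of the empty $L$-theory), so that a model of any set of sequents $S \subseteq L_{\omega\omega}^g$ is a coherent functor $\mathcal{C}_L \to \mathcal{D}$ sending each $\varphi \Rightarrow \psi$ in $S$ to an inequality $M\varphi \leq M\psi$. Note that $|\mathcal{C}_L| \leq |L| \cdot \aleph_0 < \kappa$ and similarly $|T| < \kappa$, so the index set $I := [T]^{<\omega}$ satisfies $|I| < \kappa$ (since $\kappa$ is regular and $|T| \cdot \aleph_0 < \kappa$). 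This is exactly the cardinality bound required to feed into Theorem \ref{los}.

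For each finite $S \in I$, use the hypothesis to pick a coherent functor $M_S : \mathcal{C}_L \to \mathcal{D}$ which is a model of $S$. For each sequent $\sigma \in T$, set
\[ F_\sigma = \{ S \in I : \sigma \in S \}. \]
The collection $\{F_\sigma\}_{\sigma \in T}$ has the finite intersection property, as $F_{\sigma_1} \cap \dots \cap F_{\sigma_n}$ contains the finite set $\{\sigma_1, \dots, \sigma_n\}$. Extend this to an ultrafilter $U$ on $I$ with $F_\sigma \in U$ for every $\sigma \in T$.

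Form the ultraproduct $M := \faktor{\prod_{S \in I} M_S}{U}$ in $\mathcal{D}^{\mathcal{C}_L}$. Since $|I| < \kappa$, Theorem \ref{los} applies and guarantees that $M$ is again a coherent functor $\mathcal{C}_L \to \mathcal{D}$. It remains to check that $M$ models every $\sigma = (\varphi \Rightarrow \psi) \in T$. For $S \in F_\sigma$ we have $\sigma \in S$, hence $M_S \varphi \leq M_S \psi$; therefore $F_\sigma \subseteq \{ S : M_S\varphi \leq M_S\psi \} \in U$. The second part of Theorem \ref{los} (the criterion for $\faktor{\prod A_i}{U} \subseteq \faktor{\prod B_i}{U}$) then yields $M\varphi \leq M\psi$ in $\mathcal{D}$, so $M$ models $\sigma$.

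The only mildly subtle step is the cardinality bookkeeping: one must verify $|I| < \kappa$ and that the ultraproduct in $\mathcal{D}^{\mathcal{C}_L}$ lives within the scope of the Łoś-lemma as stated. Everything else is a direct unwinding of the second conclusion of Theorem \ref{los}.
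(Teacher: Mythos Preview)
Your proof is correct and follows essentially the same route as the paper: both work with coherent functors out of $\mathcal{C}_L=\mathcal{C}_{\emptyset}$, take an ultrafilter on $[T]^{<\omega}$ refining the sets $\{S:\sigma\in S\}$ (equivalently the paper's $\widehat{S}$), and invoke the second clause of Theorem~\ref{los} to verify each sequent in the ultraproduct. The only cosmetic difference is that the paper spells out the padding needed when $\varphi$ and $\psi$ have different free variables (writing $[\varphi(\vec{x},\vec{z})\wedge \vec{y}\approx\vec{y}]$ etc.), which you silently absorb into ``$M\varphi\leq M\psi$''.
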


\begin{proof}
Let $\mathcal{C}_S$ be the syntactic category of $S$ (objects are coherent $L$-formulas up to change of free variables, morphisms are $S$-provable equivalence classes of $S$-provably functional formulas, see \cite{makkai}). For $S\subseteq S'$ we have a coherent functor $\mathcal{C}_S\to \mathcal{C}_{S'}$ mapping $[\varphi (x)]\xrightarrow{[\mu (x,y)]}[\psi (y)]$ to $[\varphi (x)]\xrightarrow{[\mu (x,y)]}[\psi (y)]$ (which is well-defined as $S\vdash \mu \Leftrightarrow \mu '$ implies $S'\vdash \mu \Leftrightarrow \mu '$). For each finite $S\subseteq T$ fix $M_S:\mathcal{C}=\mathcal{C}_{\emptyset }\to \mathcal{C}_S \to \mathcal{D}$.

Let $U\subseteq \mathcal{P}([T]^{<\omega })$ be an ultrafilter extending $\{\widehat{S}=\{S'\in [T]^{<\omega }: S\subseteq S'\} : S\in [T]^{<\omega }\}$. Then by Theorem \ref{los} $M=\faktor{\prod _{[T]^{<\omega }} M_S}{U}:\mathcal{C}\to \mathcal{D}$ is a coherent functor with 
\[M([\varphi (\vec{x},\vec{z})\wedge \vec{y}\approx \vec{y}])\subseteq M([\psi (\vec{y},\vec{z}) \wedge \vec{x}\approx \vec{x}]) \text{ for each }\varphi (\vec{x},\vec{z})\Rightarrow \psi (\vec{y},\vec{z}) \in T
\]
(More precisely the subobject above is represented by 
\[ [\varphi (\vec{x},\vec{z})\wedge y\approx y]\xrightarrow{[\varphi (\vec{x},\vec{z}) \wedge \vec{x}\approx \vec{x'} \wedge \vec{y}\approx \vec{y'} \wedge \vec{z}\approx \vec{z'}]} [\vec{x'}\approx \vec{x'}\wedge \vec{y'}\approx \vec{y'}\wedge \vec{z'}\approx \vec{z'}] \]
and similarly for $\psi$.) Hence $M$ factors through $\mathcal{C}_T$.
\end{proof}

\begin{theorem}
\label{omitting3}
Assume the following:
\begin{itemize}
    \item $\mathcal{C}$ is countable with a filtration $\mathbb{S}$ such that for each $s\in \mathbb{S}$: $s\to 1$ is effective epi.
    \item $\mathcal{D}$ is $(2^{\aleph _0})^+$-geometric, weakly Boolean, 2-valued, in which atoms are dense. It has all countable products. It has filtered colimits of size $\leq 2^{\aleph _0}$ which commute with finite limits.
    \item There is a $\mathcal{C}\to \mathcal{D}^I$ conservative coherent functor.
\end{itemize}
Then given meagre subsets $E_X\subseteq S_{\mathcal{C}}(\prod _{s\in X}s)$ for $X\subseteq \overline{\mathbb{S}}$ finite, there is a $\mathcal{C}\to \mathcal{D}$ positively closed coherent functor which omits each type from each $E_X$.
\end{theorem}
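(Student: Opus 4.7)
The plan is to apply Theorem~\ref{omitting} to reduce the task to producing a single coherent functor $M:\mathcal{C}\to\mathcal{D}$ that strongly realizes some type in the prescribed comeagre set $A\subseteq S_{\mathcal{C}}^{\omega}(\overline{\mathbb{S}})$. Once such $M$ is at hand, Theorem~\ref{omitting} delivers a positively closed coherent subfunctor $N\Rightarrow M$ weakly omitting each type in each $E_X$, and since $\mathcal{D}$ is weakly Boolean, weak omission coincides with omission, so $N$ is the desired functor. Note that $S_{\mathcal{C}}^{\omega}(\overline{\mathbb{S}})$ is the $\mathrm{Spec}$ of the distributive lattice $\mathrm{colim}_X \mathrm{Sub}_{\mathcal{C}}(\prod_{s\in X}s)$, hence is a spectral space, hence Baire by Isbell (Theorem~\ref{isbell}); in particular $A$ is dense and non-empty.

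To produce the strong realization I would exploit the conservative functor $F:\mathcal{C}\to\mathcal{D}^{I}$. Since $\mathcal{C}$ is countable, only countably many subobject inclusions need to be tested, so a countable subset of $I$ already witnesses conservativity; assume $|I|\leq\aleph_{0}$ and write $F_{i}:\mathcal{C}\to\mathcal{D}$ for the coordinate functors. For each $i$ and each $s\in\overline{\mathbb{S}}$, the object $F_{i}(s)$ is non-zero, because $s\to 1$ is effective epi in $\mathcal{C}$, whence $F_{i}(s)\to 1$ is effective epi in $\mathcal{D}$, which is 2-valued and non-degenerate. Density of atoms in $\mathcal{D}$ supplies an atom $\alpha_{i}^{s}\leq F_{i}(s)$, and I set $\alpha_{i}^{X}=\prod_{s\in X}\alpha_{i}^{s}\leq F_{i}(\prod_{s\in X}s)$ for each finite $X\subseteq\overline{\mathbb{S}}$. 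Given an ultrafilter $U$ on $I$, the hypotheses of Theorem~\ref{los} hold for $\kappa=\aleph_{1}$, $\lambda=(2^{\aleph_{0}})^{+}$ (using that $\mathcal{D}$ is $(2^{\aleph_{0}})^{+}$-geometric with countable products, that 2-valuedness implies $\{0\}\subseteq\mathrm{Sub}_{\mathcal{D}}(1)$ is a $\kappa$-prime ideal as in the remark after Theorem~\ref{los}, and the assumed filtered colimit commutation); so $M=\prod_{I}F_{i}/U:\mathcal{C}\to\mathcal{D}$ is coherent and the projection-compatible subobjects $c^{X}=\prod_{I}\alpha_{i}^{X}/U\leq M(\prod_{s\in X}s)$ determine a type $q\in S_{\mathcal{C}}^{\omega}$ via $\varphi\in q_{X}\Leftrightarrow c^{X}\cap M(\varphi)\neq\emptyset$.

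The hard part will be arranging simultaneously that each $c^{X}$ is an atom of $\mathcal{D}$ and that the induced $q$ lies in $A$. For atomicity, the moreover-clause of Theorem~\ref{los} lets me identify a subobject $b\leq c^{X}$ with a Łoś-compatible family $b_{i}\leq\alpha_{i}^{X}$; since each $\alpha_{i}^{X}$ is an atom, $b_{i}\in\{0,\alpha_{i}^{X}\}$ for each $i$, and the moreover-clause together with weak Booleaness of $\mathcal{D}$ forces $b$ to equal $0$ or $c^{X}$ according to whether $\{i:b_{i}=0\}$ lies in $U$. For the membership $q\in A$, the freedom in choosing the atoms $(\alpha_{i}^{s})$ and the ultrafilter $U$ yields a rich supply of realizable types; I would iterate through a countable enumeration of the dense opens $O_{\varphi}^{f}$ defining $G$ and of the complements of the $\pi_{X}^{-1}(E_{X})$, at each stage invoking compactness (Theorem~\ref{compactnesslos}) and the $\omega$-many occurrences of each sort in $\overline{\mathbb{S}}$ to refine the choice of atoms (equivalently, to extend $U$ along a descending chain of ultrasets) so that the emerging type meets the current open set. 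Coordinating this refinement so that both atomicity and $q\in A$ survive the passage to the limit is the crux of the argument, and is the ultraproduct analogue of the classical Henkin/Baire construction underlying the omitting types theorem.
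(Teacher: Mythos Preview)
Your reduction via Theorem~\ref{omitting} and the Baire argument for $A\neq\emptyset$ are correct and match the paper. Where you diverge is in the strategy for the remaining task, and here there are genuine gaps.

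You attempt to build \emph{one} ultraproduct $M=\prod_I F_i/U$ together with atoms $c^X$ so that the resulting type lands in $A$. Two problems: first, $\alpha_i^X=\prod_{s\in X}\alpha_i^s$ need not be an atom (a product of atoms is not an atom in general), so your Łoś argument for atomicity of $c^X$ does not get off the ground; moreover, the ``moreover'' clause of Theorem~\ref{los} only controls subobjects that are themselves ultraproducts of families $(b_i)_i$, not arbitrary subobjects of $c^X$. Second, and more seriously, you concede that steering $q$ into $A$ by iteratively refining the choice of atoms and the ultrafilter is the crux, but you give no mechanism for it. There is no evident way to make countably many independent refinements of atoms $\alpha_i^s$ (one per dense open) compatible with a single ultrafilter on a fixed index set, and nothing in the hypotheses guarantees such a Henkin-style construction terminates inside $\mathcal{D}$.

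The paper avoids all of this by proving a stronger statement: \emph{every} $\omega$-type $q\in S_{\mathcal{C}}^{\omega}(\overline{\mathbb{S}})$ is strongly realized by some coherent $M:\mathcal{C}\to\mathcal{D}$. Then one simply picks any $q\in A$ and is done. The realization is obtained syntactically: extend the canonical signature of $\mathcal{C}$ by relation symbols $R_i\hookrightarrow s_i$ and write down the theory $T$ saying that $\prod R_{i_k}$ lies inside every $\varphi\in q_X$ and is disjoint from every $\psi\notin q_X$. A $\mathcal{D}$-model of $T$ with each $R_i$ interpreted as an atom is exactly a strong realization of $q$. By compactness (Theorem~\ref{compactnesslos}, whose hypotheses the paper verifies from the assumptions on $\mathcal{D}$) it suffices to find such a model for each finite $T_0\subseteq T$; this reduces to finding, in some $\mathcal{D}$-model of $\mathcal{C}$, a single atom in $M(\bigwedge\varphi_i)\setminus M(\bigvee\psi_j)$, which exists by conservativity of $F$, weak Booleanness, and density of atoms. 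The countable ultraproduct of these finite-stage models then interprets each $R_i$ atomically. The key move you are missing is to realize the type \emph{first} and let $A$ take care of itself, rather than trying to aim the construction at $A$.
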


\begin{proof}
First we check that the conditions of Theorem \ref{los} are satisfied. Assume that each $f_i:x_i\twoheadrightarrow y_i$ is effective epi and that $\prod _i f_i:\prod _i x_i\to \prod _i y_i$ is not. As $\mathcal{D}$ is weakly Boolean with dense atoms there is an atom $a$ which is disjoint from the image of $\prod _i f_i$. Taking atoms in each preimage and taking their product shows $a$ to lie in the image of $\prod _if_i$.

We also have to prove $\bigcap _{i<\omega } a_{0,i}\cup a_{1,i} =\bigcup _{\varepsilon :\omega \to \{0,1\}} \bigcap _{i<\omega } a_{\varepsilon (i),i}$. The $\geq $ direction is obvious, if this is a proper subobject we get an atom which is disjoint from it, this yields contradiction.

By Theorem \ref{omitting} and by Theorem \ref{isbell} it suffices to prove that every $\omega $-type $p\in S_{\mathcal{C}}^{\omega }(\overline{\mathbb{S}})$ is (strongly) realized by a $\mathcal{C}\to \mathcal{D}$ coherent functor.

Let $L$ be the extended canonical signature $L_{\mathcal{C}}$ of $\mathcal{C}$ plus countably many relation symbols $(R_i\hookrightarrow s_i)_{i<\omega }$ and $T$ be the theory 
\begin{multline*}
Th(\mathcal{C})+\{R_{i_0}(x_0)\wedge \dots R_{i_{k-1}}(x_{k-1}) \Rightarrow \varphi (x_0,\dots x_{k-1}) : \varphi \in \pi _{\{i_0,\dots i_{k-1}\}}(q) \}+\\ \{R_{j_0}(x_0')\wedge \dots \wedge R_{j_{l-1}}(x_{l-1}')\wedge \psi (x_0',\dots x_{l-1}')\Rightarrow \bot : \psi \not \in \pi _{\{j_0,\dots j_{l-1} \}}(q) \}
\end{multline*}
We have to prove that $T$ has a model in $\mathcal{D}$ where each $R_i$ is interpreted as an atomic subobject.

By the previous theorem it suffices to prove that every finite $T_0\subseteq T$ has such a model $M_{T_0}$ in $\mathcal{D}$, as in this case a countable ultraproduct of those is a model of $T$. In this ultraproduct the interpretation of $R_i$ is the filtered colimit of countable products of $R_i^{M_{T_0}}$'s, which is an atom.

Given $Th(\mathcal{C})$ plus finitely many formulas from the rest of the axioms mentioning $(\varphi _i)_{i<m}$ and $(\psi _i)_{i<n}$, adding extra variables (taking pullbacks along the projections) we may assume that these formulas contain the same free variables (are all subobjects of the same finite product of $s_i$'s). As $\bigcap \varphi _i \in \pi _{\vec{s}}(q)$, $\bigvee \psi _j \not \in \pi _{\vec{s}}(q)$ we get $T\not \vdash \bigwedge \varphi _i\Rightarrow \bigvee _i \psi _i$ and by assumption there is a model $M:\mathcal{C}\to \mathcal{D}$ in which $M(\bigwedge \varphi _i)\not \subseteq M(\bigvee \psi _i)$ and as $\mathcal{D}$ is weakly Boolean with dense atoms there is an atom in $M(\bigwedge \varphi _i)\setminus M(\bigvee \psi _i)\subseteq M(\vec{s})$ and its projections interpret the $R_i$'s which appear in the finitely many formulas.
\end{proof}

Our next goal is to see that (most of) the conditions of Theorem \ref{los} are satisfied when $\mathcal{D}$ is the pretopos completion of the syntactic category of some $L_{\lambda \kappa }$-theory. We start with the observation:

\begin{proposition}
\label{notmono}
Let $\mathcal{D}$ be an exact category (e.g.~a pretopos) and $Y\xrightarrow{\langle q_1,q_2\rangle} X\times X$ be a (not necessarily monic) map. If it satisfies the axioms of internal equivalence relations, namely:
\begin{enumerate}
    \item (reflexivity) There is a map $r':X\to Y$ with $q_1r'=q_2r'=1_X$.
    \item (symmetry) There is an effective epimorphism $s':Y\to Y$ such that $\langle q_1,q_2\rangle \circ s' = \langle q_2,q_1\rangle $.
    \item (transitivity) Let 
\[\begin{tikzcd}
	{Y\times _X Y} & Y \\
	Y & X
	\arrow["{q_2'}", from=1-1, to=1-2]
	\arrow["{q_1}", from=1-2, to=2-2]
	\arrow["{q_1'}"', from=1-1, to=2-1]
	\arrow["{q_2}", from=2-1, to=2-2]
\end{tikzcd}\]
    be a pullback diagram. There is an effective epimorphism $t':Y\times _XY\to Y$ such that $\langle q_1q_1',q_2q_2'\rangle =\langle q_1,q_2\rangle \circ t'$.
\end{enumerate}
then the coequalizer of $f$ and $g$ exists.
\end{proposition}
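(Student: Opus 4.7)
The point is that although $\langle q_1,q_2\rangle$ need not be monic, its image \emph{is} an equivalence relation in the usual sense, and then exactness finishes the job. Concretely, factor $\langle q_1,q_2\rangle$ as
\[
Y \xtwoheadrightarrow{p} \bar{Y} \xhookrightarrow{\langle \bar q_1,\bar q_2\rangle} X\times X,
\]
and write $\sigma:X\times X\to X\times X$ for the swap. My plan is to verify that $\bar Y\hookrightarrow X\times X$ is a (monic) internal equivalence relation, then invoke exactness of $\mathcal{D}$ to obtain a coequalizer $X\twoheadrightarrow X/\bar Y$, and finally observe that this coequalizes $q_1,q_2$ as well because $p$ is epic.

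\textbf{Verifying the equivalence relation axioms on $\bar Y$.} For \emph{reflexivity}, $r':X\to Y$ with $\langle q_1,q_2\rangle r'=\Delta_X$ gives a factorization of the monic $\Delta_X$ through $\langle q_1,q_2\rangle$; composing with $p$ yields $X\to\bar Y$ over $\Delta_X$, so $\Delta_X\leq \bar Y$ in $\mathrm{Sub}(X\times X)$. For \emph{symmetry}, since $s'$ is effective epi the images of $\langle q_1,q_2\rangle\circ s'$ and $\langle q_1,q_2\rangle$ agree; but the former equals $\sigma\circ\langle q_1,q_2\rangle$, whose image is $\sigma(\bar Y)$. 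Hence $\bar Y=\sigma(\bar Y)$ as subobjects of $X\times X$. For \emph{transitivity}, I first use that in a coherent/exact category effective epis are pullback-stable and products of effective epis are effective epi, so
\[
Y\times_X Y \twoheadrightarrow \bar Y\times_X\bar Y
\]
is effective epi. The transitivity witness $t'$ gives $\langle q_1,q_2\rangle\circ t'=\langle q_1q_1',q_2q_2'\rangle$, and since $t'$ is effective epi this composite has image contained in $\bar Y$. Now the diagram
\[
\begin{tikzcd}
Y\times_X Y \ar[r,two heads]\ar[d] & \bar Y\times_X\bar Y \ar[d,"{\langle \bar q_1\bar q_1',\bar q_2\bar q_2'\rangle}"] \\
\bar Y \ar[r,hook] & X\times X
\end{tikzcd}
\]
has an effective epi on the left and a mono on the right, so by the standard orthogonality there is a (unique) diagonal $\bar Y\times_X\bar Y\to \bar Y$, which is the transitivity witness.

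\textbf{Finishing.} Since $\mathcal{D}$ is exact, the monic equivalence relation $\bar Y\rightrightarrows X$ is effective: the coequalizer $c:X\twoheadrightarrow X/\bar Y$ exists. For any $h:X\to Z$, $hq_1=hq_2$ is equivalent to $h\bar q_1\circ p=h\bar q_2\circ p$, and since $p$ is epic this is equivalent to $h\bar q_1=h\bar q_2$. Therefore $c$ is also the coequalizer of $q_1,q_2$.

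\textbf{Main obstacle.} The only non-routine step is transitivity of $\bar Y$: one has to know that $Y\times_X Y\to \bar Y\times_X \bar Y$ is effective epi (which uses pullback-stability of effective epis together with stability of images under products, available in our setting), and then apply the (effective epi)/mono lifting to transfer the transitivity of $Y$ to $\bar Y$. Reflexivity and symmetry are immediate once one thinks in terms of images of subobjects of $X\times X$.
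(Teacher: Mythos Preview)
Your proposal is correct and follows essentially the same approach as the paper: factor $\langle q_1,q_2\rangle$ through its image, verify that this image is a genuine internal equivalence relation, invoke exactness, and conclude by noting that the epi $p$ makes the resulting coequalizer also coequalize $q_1,q_2$. Your phrasing of the symmetry and transitivity steps via subobject calculus and (effective epi)/mono orthogonality is just a repackaging of the paper's ``uniqueness of image factorization'' argument; the only point to tighten is that the map $Y\times_X Y\twoheadrightarrow \bar Y\times_X\bar Y$ is effective epi not because of a ``products of effective epis'' principle but because it decomposes as two pullbacks of $p$ (via an intermediate $\bar Y\times_X Y$), which is exactly how the paper argues it.
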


\begin{proof}
We will prove that in the image factorization $\langle q_1,q_2\rangle :Y\xtwoheadrightarrow{p} R\xhookrightarrow{\langle f,g\rangle } X\times X$ the subobject $R$ is an internal equivalence relation on $X$. For reflexivity we can take $r=pr'$. Using that effective epi-mono factorizations are unique we get a map:
\[\begin{tikzcd}
	Y && R && {X\times X} \\
	\\
	Y && R && {X\times X}
	\arrow["{s'}"', two heads, from=1-1, to=3-1]
	\arrow["p", two heads, from=1-1, to=1-3]
	\arrow["p", two heads, from=3-1, to=3-3]
	\arrow["{\langle f,g\rangle}", hook, from=1-3, to=1-5]
	\arrow["{\langle g,f\rangle }"{description}, hook, from=3-3, to=1-5]
	\arrow["s"', dashed, from=1-3, to=3-3]
	\arrow["{\langle f,g\rangle}"', hook, from=3-3, to=3-5]
	\arrow["{\langle \pi _2,\pi _1\rangle }"', from=3-5, to=1-5]
\end{tikzcd}\]
proving symmetry. Finally we have

\[\begin{tikzcd}
	{Y\times _XY} && \bullet && Y \\
	& pb && pb \\
	\bullet && {R\times _XR} && R \\
	& pb && pb \\
	Y && R && X
	\arrow["g", from=5-3, to=5-5]
	\arrow["f"', from=3-5, to=5-5]
	\arrow["p", two heads, from=5-1, to=5-3]
	\arrow["p"', two heads, from=1-5, to=3-5]
	\arrow["{g'}", from=3-3, to=3-5]
	\arrow["{f'}", from=3-3, to=5-3]
	\arrow[two heads, from=3-1, to=3-3]
	\arrow[from=3-1, to=5-1]
	\arrow[two heads, from=1-3, to=3-3]
	\arrow[from=1-3, to=1-5]
	\arrow[two heads, from=1-1, to=1-3]
	\arrow[two heads, from=1-1, to=3-1]
	\arrow["{p'}"{description, pos=0.7}, curve={height=12pt}, dashed, two heads, from=1-1, to=3-3]
	\arrow["{q_1'}"{description}, curve={height=12pt}, from=1-1, to=5-1]
	\arrow["{q_2'}"{description}, curve={height=-12pt}, from=1-1, to=1-5]
	\arrow["{q_2}"{description}, curve={height=12pt}, from=5-1, to=5-5]
	\arrow["{q_1}"{description}, curve={height=-12pt}, from=1-5, to=5-5]
\end{tikzcd}\]
and using the uniqueness of image factorizations again, we get

\[\begin{tikzcd}
	&&&& Y \\
	&&&&& R \\
	&& {R\times _XR} \\
	{Y\times _XY} &&&& {X\times X}
	\arrow["{\langle q_1q_1',q_2q_2'\rangle }"', from=4-1, to=4-5]
	\arrow["{p'}", two heads, from=4-1, to=3-3]
	\arrow["{\langle ff',gg'\rangle}", from=3-3, to=4-5]
	\arrow["{\langle q_1,q_2\rangle}"{description, pos=0.6}, from=1-5, to=4-5]
	\arrow["p", two heads, from=1-5, to=2-6]
	\arrow["{\langle f,g\rangle}", hook, from=2-6, to=4-5]
	\arrow["{t'}"{description}, curve={height=-24pt}, two heads, from=4-1, to=1-5]
	\arrow["t"{description}, curve={height=-12pt}, dashed, from=3-3, to=2-6]
\end{tikzcd}\]
proving transitivity. Therefore the coequalizer of $f$ and $g$ exists. Since $p$ is (effective) epi, this is also the coequalizer of $q_1$ and $q_2$.
\end{proof}

\begin{theorem}
\label{hasfiltcolim}
Let $\mathcal{D} $ be a $\lambda $-pretopos. Then it has filtered colimits of size $<\lambda $, which are pullback-stable. Moreover any $\lambda $-geometric functor between $\lambda $-pretoposes preserves filtered colimits of size $<\lambda $.
\end{theorem}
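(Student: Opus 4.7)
The plan is to construct filtered colimits of size $<\lambda$ explicitly as coequalizers of pseudo-equivalence relations built from $<\lambda$-coproducts, both of which are guaranteed by the $\lambda$-pretopos structure; pullback-stability and preservation by $\lambda$-geometric functors then follow by tracking the construction.

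First I fix a filtered diagram $F:I\to\mathcal{D}$ with $|I|<\lambda$ and set
\[
X=\coprod_{i\in I}F(i),\qquad Y=\coprod_{(i\xrightarrow{f}k\xleftarrow{g}j)\in\mathrm{Cospan}(I)}F(i)\times_{F(k)}F(j),
\]
both of which exist in $\mathcal{D}$ as $<\lambda$-coproducts. The natural projections of the pullbacks composed with the coproduct inclusions assemble into maps $q_1,q_2:Y\to X$. I then verify that $\langle q_1,q_2\rangle:Y\to X\times X$ satisfies the hypotheses of Proposition \ref{notmono}: reflexivity via the identity cospans $i\xrightarrow{\mathrm{id}}i\xleftarrow{\mathrm{id}}i$; symmetry by swapping the legs of each cospan (an automorphism of $Y$); and transitivity via filteredness of $I$, which produces for each compatible pair of cospans $(f_1:i\to k_1,g_1:j\to k_1)$ and $(f_2:j\to k_2,g_2:l\to k_2)$ a target $m$ with arrows $\alpha:k_1\to m$, $\beta:k_2\to m$ satisfying $\alpha g_1=\beta f_2$, yielding the composite cospan $(\alpha f_1:i\to m,\beta g_2:l\to m)$. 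The resulting map $t':Y\times_XY\to Y$ admits a section (pair each cospan $(h_1:a\to m,h_2:b\to m)$ with the identity cospan at $a$, and for that pair choose the witness $(m,h_1,\mathrm{id}_m)$, so that the composite is precisely $(h_1,h_2)$), hence is split epi and in particular effective epi. Proposition \ref{notmono} then delivers the coequalizer $c:X\to C$ of $q_1,q_2$; one checks routinely that $C$ with the cocone maps $c\circ\iota_i:F(i)\to C$ is the filtered colimit of $F$, since any cocone $(\phi_i:F(i)\to Z)$ assembles to $\phi:X\to Z$ equalizing $q_1,q_2$ by the cocone compatibilities.

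For pullback-stability, both ingredients of the construction are stable in a $\lambda$-pretopos: $<\lambda$-coproducts are pullback-stable by disjointness, and coequalizers of equivalence relations are pullback-stable by exactness. Pulling the entire construction back along any $g:Z\to C$ therefore yields the analogous construction for the pulled-back diagram, so the filtered colimit commutes with pullback. For the \emph{moreover}: a $\lambda$-geometric functor between $\lambda$-pretoposes preserves finite limits (in particular pullbacks and kernel pairs), $<\lambda$-coproducts, and effective epimorphisms, so it preserves both $X$ and $Y$ as well as the coequalizer, and hence the filtered colimit.

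The main delicacy lies in the verification that $t'$ is effective epi; the split-epi section described above bypasses this cleanly, provided one fixes the choice function $(m,\alpha,\beta)$ per pair of cospans so that the identity-cospan case recovers the second cospan. With this arranged, the rest reduces to routine applications of the $\lambda$-pretopos axioms (disjointness of $<\lambda$-coproducts, exactness, and pullback-stability of image factorizations).
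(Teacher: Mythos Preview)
Your proposal is correct and follows essentially the same approach as the paper: both build the filtered colimit as the coequalizer of the pair $\bigsqcup_{(f,g)}pb(d_f,d_g)\rightrightarrows\bigsqcup_i d_i$ by verifying the hypotheses of Proposition~\ref{notmono}, and both deduce pullback-stability and preservation by $\lambda$-geometric functors from the explicit shape of this construction.

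The only noteworthy difference is in the transitivity step. The paper obtains effectiveness of $t'$ by stipulating that zig-zags of the shape $\bullet\to\bullet=\bullet=\bullet\leftarrow\bullet$ be completed with identities, which makes $t'$ restrict to the identity on those summands and hence split. You instead pair an arbitrary cospan $(h_1,h_2)$ with the identity cospan at its left foot and choose the completion $(m,h_1,\mathrm{id}_m)$; this is a slightly different but equally valid section, and your explicit remark that ``split epi $\Rightarrow$ effective epi'' is cleaner than the paper's bare ``surjective''. On pullback-stability you are more telegraphic than the paper (which chases the image factorisation $Y\twoheadrightarrow R\hookrightarrow X\times X$ through the base change explicitly), but the ingredients you cite---pullback-stability of $<\lambda$-coproducts and of coequalizers of kernel pairs in an exact category---are exactly what is needed, so nothing is missing.
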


\begin{proof}
Let $d_{\bullet }:I\to \mathcal{D}$ be a filtered diagram. We will prove that the pair
\[\begin{tikzcd}
	{\bigsqcup _{i\xrightarrow{f} k \xleftarrow{g}j} pb(d_f , d_g)} && {\bigsqcup _{i} d_i}
	\arrow["{\sqcup p_{f,g}}", shift left=1, from=1-1, to=1-3]
	\arrow["{\sqcup q_{f,g}}"', shift right=1, from=1-1, to=1-3]
\end{tikzcd}\]
satisfies the axioms of Proposition \ref{notmono}, and hence its coequalizer exists. Here we take the coproducts over the pullback diagrams

\[\begin{tikzcd}
	{pb(d_f,d_g)} && {d_j} \\
	\\
	{d_i} && {d_k}
	\arrow["{d_f}"', from=3-1, to=3-3]
	\arrow["{d_g}", from=1-3, to=3-3]
	\arrow["{q_{f,g}}"', from=1-1, to=3-1]
	\arrow["{p_{f,g}}", from=1-1, to=1-3]
\end{tikzcd}\]

Reflexivity: Take $\bigsqcup _i d_i \xrightarrow{\bigsqcup 1_{d_i} } \bigsqcup _{i\xrightarrow{f} k \xleftarrow{g}j} pb(d_f ,d_g)$ which maps $d_i$ to the pullback of $(1_{d_i},1_{d_i})$.

Symmetry: Take 
\[
\bigsqcup _{i\xrightarrow{f} k \xleftarrow{g}j} pb(d_f, d_g)\xrightarrow{\bigsqcup 1_{pb(d_f,d_g)} } \bigsqcup _{i\xrightarrow{f} k \xleftarrow{g}j} pb(d_f,d_g) 
\]
which maps $pb(d_f,d_g)$ to $pb(d_g,d_f)$ identically (or more precisely with the induced isomorphism). This is an isomorphism, in particular an effective epi.

Transitivity: By (the infinitary analogue of) Proposition 8, Lecture 7. in \cite{lurie} we have that 
\[
\adjustbox{scale=0.9}{
\begin{tikzcd}
	{\bigsqcup\limits _{i\xrightarrow{f} k \xleftarrow{g}j}\bigsqcup\limits _{i'\xrightarrow{f'} k' \xleftarrow{g'}i}  pb(q_{f , g},p_{{f'} , {g'}})} && {\bigsqcup\limits _{i\xrightarrow{f} k \xleftarrow{g}j} pb(d_f,d_g)} \\
	\\
	{\bigsqcup\limits _{i\xrightarrow{f} k \xleftarrow{g}j} pb(d_f,d_g)} && {\bigsqcup _{i} d_i}
	\arrow["{\sqcup q_{f,g}}"', from=3-1, to=3-3]
	\arrow["{\sqcup p_{f,g}}", from=1-3, to=3-3]
	\arrow["r"', from=1-1, to=3-1]
	\arrow["s", from=1-1, to=1-3]
\end{tikzcd}
}
\]
is a pullback. 

In other terms for each zig-zag $(f',g',f,g)$ we take
\[
\adjustbox{scale=1}{
\begin{tikzcd}
	& {d_{i'}} \\
	{d_{k'}} && \textcolor{rgb,255:red,56;green,56;blue,56}{pb(f',g')} \\
	& {d_i} && \textcolor{rgb,255:red,56;green,56;blue,56}{pb(p_{{f'},{g'}},q_{f,g})} \\
	{d_k} && \textcolor{rgb,255:red,56;green,56;blue,56}{pb(d_f,d_g)} \\
	& {d_j}
	\arrow["{d_{f'}}"', from=1-2, to=2-1]
	\arrow["{d_{g'}}"', from=3-2, to=2-1]
	\arrow["d_f"', from=3-2, to=4-1]
	\arrow["d_g"', from=5-2, to=4-1]
	\arrow["{q_{{f'},{g'}}}"', color={rgb,255:red,56;green,56;blue,56}, from=2-3, to=1-2]
	\arrow["{p_{{f'},{g'}}}", color={rgb,255:red,56;green,56;blue,56}, from=2-3, to=3-2]
	\arrow["{q_{f,g}}"', color={rgb,255:red,56;green,56;blue,56}, from=4-3, to=3-2]
	\arrow["{p_{f,g}}", color={rgb,255:red,56;green,56;blue,56}, from=4-3, to=5-2]
	\arrow["{s_{p,q}}"', color={rgb,255:red,56;green,56;blue,56}, from=3-4, to=2-3]
	\arrow["{r_{p,q}}", color={rgb,255:red,56;green,56;blue,56}, from=3-4, to=4-3]
\end{tikzcd}
}
\]

We shall define a map $t$ from this coproduct to $\bigsqcup _{i\xrightarrow{f} k \xleftarrow{g}j} pb(d_f,d_g)$. We will do this by just extending the above diagram to

\[
\adjustbox{scale=1}{
\begin{tikzcd}
	&& {d_{i'}} \\
	& {d_{k'}} &&& \textcolor{rgb,255:red,56;green,56;blue,56}{pb(d_{f'},d_{g'})} \\
	{d_m} && {d_i} && {pb(d_{h'f'},d_{hg})} && \textcolor{rgb,255:red,56;green,56;blue,56}{pb(p_{{f'},{g'}},q_{f,g})} \\
	& {d_k} &&& \textcolor{rgb,255:red,56;green,56;blue,56}{pb(d_f,d_g)} \\
	&& {d_j}
	\arrow["{d_{f'}}"', from=1-3, to=2-2]
	\arrow["{d_{g'}}"', from=3-3, to=2-2]
	\arrow["{d_f}"', from=3-3, to=4-2]
	\arrow["{d_g}", from=5-3, to=4-2]
	\arrow["{q_{{f'},{g'}}}"', color={rgb,255:red,56;green,56;blue,56}, from=2-5, to=1-3]
	\arrow["{p_{{f'},{g'}}}"{description, pos=0.3}, color={rgb,255:red,56;green,56;blue,56}, from=2-5, to=3-3]
	\arrow["{q_{f,g}}"{description, pos=0.3}, color={rgb,255:red,56;green,56;blue,56}, from=4-5, to=3-3]
	\arrow["{p_{f,g}}", color={rgb,255:red,56;green,56;blue,56}, from=4-5, to=5-3]
	\arrow["{s_{p,q}}"', color={rgb,255:red,56;green,56;blue,56}, from=3-7, to=2-5]
	\arrow["{r_{p,q}}", color={rgb,255:red,56;green,56;blue,56}, from=3-7, to=4-5]
	\arrow["{d_{h'}}"', dashed, from=2-2, to=3-1]
	\arrow["{d_h}", dashed, from=4-2, to=3-1]
	\arrow["{q_{{h'}{f'},hg}}"{description, pos=0.8}, curve={height=-18pt}, from=3-5, to=1-3]
	\arrow["{p_{{h'}{f'},hg}}"{description, pos=0.7}, curve={height=18pt}, from=3-5, to=5-3]
	\arrow["t"{description}, dashed, from=3-7, to=3-5]
\end{tikzcd}
}
\]
and then take the induced map. The commutativity condition in the axiom is precisely the commutativity of the two triangles in which $t$ appears. To make sure that $t$ is surjective we just have to assume that zig-zags of the form $\bullet \to \bullet = \bullet = \bullet \leftarrow \bullet $ were chosen to be completed with identities.

This proves that the coequalizer
\[\begin{tikzcd}
	{\bigsqcup _{i\xrightarrow{f} k \xleftarrow{g}j} pb(d_f,d_g)} && {\bigsqcup _{i} d_i} && c
	\arrow["{\sqcup p_{f,g}}", shift left=1, from=1-1, to=1-3]
	\arrow["{\sqcup q_{f,g}}"', shift right=1, from=1-1, to=1-3]
	\arrow["\varphi", from=1-3, to=1-5]
\end{tikzcd}\]
exists. To prove that $(\restr{\varphi }{d_i})_i$ is the colimit cocone we need that given $(\psi _i:d_i\to c')_i$ they form a cocone over $d_{\bullet }$ iff $\bigsqcup _i \psi _i$ coequalizes the above pair of maps, i.e.~iff for any span $i\xrightarrow{f}k\xleftarrow{g}j$ we have that the left half of 
\[\begin{tikzcd}
	& {d_i} \\
	{pb(d_f,d_g)} & {c'} & {d_k} \\
	& {d_j}
	\arrow["d_f", from=1-2, to=2-3]
	\arrow["d_g"', from=3-2, to=2-3]
	\arrow["{q_{f,g}}", from=2-1, to=1-2]
	\arrow["{p_{f,g}}"', from=2-1, to=3-2]
	\arrow["{\psi_i}"', from=1-2, to=2-2]
	\arrow["{\psi _j}", from=3-2, to=2-2]
\end{tikzcd}\]
commutes. $\Rightarrow $: In this case the left square is the gluing of two commutative squares. $\Leftarrow $: Take $g=1_{k}$.

This coequalizer is clearly preserved by any $\lambda $-geometric functor between $\lambda $-pretoposes.

We are left to show that filtered colimits are pullback-stable. When we form pullbacks along $\alpha $ in
\[
\adjustbox{scale=1}{
\begin{tikzcd}
	&& {d_i^*} \\
	{pb(d_f,d_g)^*} &&& {d_k^*} && {c'} \\
	& {d_j^*} \\
	&& {d_i} \\
	{pb(d_f,d_g)} &&& {d_k} && c \\
	& {d_j}
	\arrow["{d_g}"{description}, color={rgb,255:red,92;green,92;blue,214}, from=6-2, to=5-4]
	\arrow["{d_f}"{description}, color={rgb,255:red,92;green,92;blue,214}, from=4-3, to=5-4]
	\arrow[curve={height=6pt}, from=6-2, to=5-6]
	\arrow[from=5-4, to=5-6]
	\arrow[curve={height=-6pt}, from=4-3, to=5-6]
	\arrow["\alpha", from=2-6, to=5-6]
	\arrow[draw={rgb,255:red,92;green,92;blue,214}, from=2-4, to=5-4]
	\arrow[draw={rgb,255:red,214;green,153;blue,92}, from=1-3, to=4-3]
	\arrow[draw={rgb,255:red,92;green,92;blue,214}, from=3-2, to=6-2]
	\arrow["{d_g^*}"{description, pos=0.3}, color={rgb,255:red,92;green,92;blue,214}, from=3-2, to=2-4]
	\arrow["{d_f^*}"{description}, color={rgb,255:red,92;green,92;blue,214}, from=1-3, to=2-4]
	\arrow[curve={height=-6pt}, from=1-3, to=2-6]
	\arrow[from=2-4, to=2-6]
	\arrow["{q_{f,g}}"{pos=0.3}, from=5-1, to=4-3]
	\arrow["{p_{f,g}}"', from=5-1, to=6-2]
	\arrow[from=2-1, to=5-1]
	\arrow[curve={height=6pt}, from=3-2, to=2-6]
	\arrow["{\overline{q_{f,g}}}", from=2-1, to=1-3]
	\arrow["{\overline{p_{f,g}}}", dashed, from=2-1, to=3-2]
\end{tikzcd}
}
\]
the blue squares are pullbacks by the pasting law, and if we form pullback along the orange map we get that the top face of the cube is a pullback. Using this, plus the distributive law Proposition 8, Lecture 7 of \cite{lurie} again, we identify the pullback of our coequalizer as

\[\begin{tikzcd}
	{A'=\bigsqcup \limits_{i\xrightarrow{f}k\xleftarrow{g}j} pb(d_f^*,d_g^*)} && {X'=\bigsqcup _i d_i^*} && {c'} \\
	& pb && pb \\
	{A=\bigsqcup \limits_{i\xrightarrow{f}k\xleftarrow{g}j} pb(d_f,d_g)} && {X=\bigsqcup _i d_i} && c
	\arrow["{Q=\bigsqcup q_{f,g}}"', shift right=1, from=3-1, to=3-3]
	\arrow["{P=\bigsqcup p_{f,g}}", shift left=1, from=3-1, to=3-3]
	\arrow["{\Phi =\bigsqcup \varphi _i}", two heads, from=3-3, to=3-5]
	\arrow["{H=\alpha}", from=1-5, to=3-5]
	\arrow["{\Phi '=\bigsqcup \varphi _i^*}", two heads, from=1-3, to=1-5]
	\arrow["{H'=\bigsqcup \alpha _i'}"{description}, from=1-3, to=3-3]
	\arrow["{P'=\bigsqcup \overline{p_{f,g}}}", shift left=1, from=1-1, to=1-3]
	\arrow["{Q'=\bigsqcup \overline{q_{f,g}}}"', shift right=1, from=1-1, to=1-3]
	\arrow[from=1-1, to=3-1]
\end{tikzcd}\]

So we have to check that the first row is a coequalizer. We know that the image factorization $\langle P,Q \rangle : A\xtwoheadrightarrow{\psi }B\xhookrightarrow{\langle F,G\rangle }X\times X $  yields the kernel pair of $\Phi $, and we would like to see that the image factorization of $\langle P',Q'\rangle $ yields the kernel pair of $\Phi '$. This can be checked on

\[\begin{tikzcd}
	&&&& {X'} \\
	{A'} && {\widetilde{B}} &&&& {c'} \\
	&&& {X'} \\
	&&&& X \\
	A && B &&&& c \\
	&&& X
	\arrow["P"{description}, from=5-1, to=6-4]
	\arrow["\Phi"{description}, from=6-4, to=5-7]
	\arrow["Q"{description, pos=0.6}, from=5-1, to=4-5]
	\arrow["\Phi"{description}, from=4-5, to=5-7]
	\arrow["{Q'}"{description}, from=2-1, to=1-5]
	\arrow["{P'}"{description}, from=2-1, to=3-4]
	\arrow["{\Phi '}"{description, pos=0.7}, from=3-4, to=2-7]
	\arrow["{\Phi'}"{description}, from=1-5, to=2-7]
	\arrow["H"{description}, from=2-7, to=5-7]
	\arrow["{H'}"{description, pos=0.3}, from=1-5, to=4-5]
	\arrow["{H'}"{description, pos=0.2}, from=3-4, to=6-4]
	\arrow[from=2-1, to=5-1]
	\arrow["G"{description, pos=0.7}, from=5-3, to=4-5]
	\arrow["F"{description}, from=5-3, to=6-4]
	\arrow["{G'}"{description}, from=2-3, to=1-5]
	\arrow["{F'}"{description}, from=2-3, to=3-4]
	\arrow[dashed, from=2-3, to=5-3]
	\arrow["\psi"{description}, two heads, from=5-1, to=5-3]
	\arrow["{\widetilde{\psi}}"{description}, dashed, from=2-1, to=2-3]
\end{tikzcd}\]
Indeed, by the pasting law again, every face of the right inner cube is a pullback. The square with two dashed arrows commutes, as $\langle F,G\rangle $ is monic. Then this square is also a pullback and hence $\widetilde{\psi }$ is effective epi as we wished. 
\end{proof}

\begin{theorem}
\label{commfinlim}
Let $\mathcal{D}$ be a $\lambda $-pretopos. Then filtered colimits of size $<\lambda $ commute with finite limits.
\end{theorem}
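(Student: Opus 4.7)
The plan is to reduce commutation with finite limits to three special cases: the terminal object, binary products, and equalizers. The terminal case is immediate because any filtered category is connected, so $\operatorname{colim}_I \Delta 1 \cong 1$.

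For binary products, given filtered diagrams $A_\bullet, B_\bullet : I \to \mathcal{D}$ with colimits $A$ and $B$, I would apply pullback-stability (Theorem \ref{hasfiltcolim}) to the projection $A \times B \to A$ along $A = \operatorname{colim}_i A_i$ to obtain $A \times B = \operatorname{colim}_i A_i \times B$, and again to each $A_i \times B \to B$ along $B = \operatorname{colim}_j B_j$ to get $A \times B = \operatorname{colim}_{(i,j) \in I \times I} A_i \times B_j$. The diagonal $\Delta : I \to I \times I$ is cofinal for filtered $I$ (nonemptiness and connectedness of $(i,j)/\Delta$ follow from filteredness), so the double colimit collapses to $\operatorname{colim}_k A_k \times B_k$.

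For equalizers, given a filtered diagram of parallel pairs $f_i, g_i : X_i \to Y_i$, write $E = \operatorname{Eq}(f,g) = X \times_{Y \times Y} Y$ as the pullback of $\langle f,g \rangle$ and $\Delta_Y$, and apply pullback-stability along $X = \operatorname{colim}_i X_i$ to get $E = \operatorname{colim}_i X_i \times_{Y \times Y} Y$. Factoring $X_i \to Y \times Y$ through $Y_i \times Y_i$ decomposes each summand as $X_i \times_{Y_i \times Y_i} (Y_i \times_Y Y_i)$, where $Y_i \times_Y Y_i$ is the kernel pair of $Y_i \to Y$. Granting the kernel-pair identity
\[
Y_i \times_Y Y_i \;=\; \operatorname{colim}_{j \geq i} Y_i \times_{Y_j} Y_i,
\]
a short pullback chase via naturality of $f_\bullet, g_\bullet$ identifies $X_i \times_{Y_i \times Y_i} (Y_i \times_{Y_j} Y_i)$ with $X_i \times_{X_j} \operatorname{Eq}(f_j, g_j)$. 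A Fubini rearrangement of the resulting double colimit over $\{(i,j) : i \leq j\}$, followed by pullback-stability applied to $\operatorname{Eq}(f_j, g_j) \to X_j$ and the trivial fact that $I_{\leq j}$ has terminal object $j$ (so $\operatorname{colim}_{i \leq j} X_i = X_j$), collapses everything to $\operatorname{colim}_j \operatorname{Eq}(f_j, g_j)$.

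The main obstacle is the kernel-pair identity itself; informally it says two elements of $Y_i$ identified in the colimit $Y$ are already identified at some finite stage $Y_j$. The plan is to extract it from the coequalizer construction in Theorem \ref{hasfiltcolim}: $Y$ is realized as the coequalizer of the reflexive/symmetric/transitive (not necessarily monic) relation $R = \bigsqcup_{j \to l \leftarrow k} Y_j \times_{Y_l} Y_k$ on $\bigsqcup_j Y_j$. By Proposition \ref{notmono} together with effectiveness of equivalence relations in a pretopos, the image $R'$ of $R \to \bigsqcup_{j,k} Y_j \times Y_k$ is the kernel pair of the quotient $\bigsqcup_j Y_j \twoheadrightarrow Y$. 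Using extensivity to identify $\bigsqcup_{j,k} Y_j \times Y_k$ with $(\bigsqcup_j Y_j) \times (\bigsqcup_k Y_k)$ and restricting to the summand $Y_i \times Y_i$, the image on this component is the directed union $\bigcup_{l \geq i} Y_i \times_{Y_l} Y_i$ of subobjects of $Y_i \times Y_i$, which is computed as $\operatorname{colim}_{l \geq i} Y_i \times_{Y_l} Y_i$.
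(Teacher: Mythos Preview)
Your argument is correct and takes a genuinely different route from the paper. The paper gives a three-line embedding proof: the Yoneda map $y:\mathcal{D}\to Sh(\mathcal{D},J)$ into sheaves for the topology generated by $<\lambda$ effective-epi families is fully faithful and $\lambda$-geometric, hence (by the last clause of Theorem~\ref{hasfiltcolim}) preserves and reflects both finite limits and $<\lambda$ filtered colimits; since these commute in any Grothendieck topos, they commute in $\mathcal{D}$. You instead work internally, reducing to the terminal object, binary products, and equalizers, and you extract the crucial kernel-pair identity $Y_i\times_Y Y_i \cong \operatorname{colim}_{j\in i/I} Y_i\times_{Y_j}Y_i$ directly from the explicit coequalizer presentation of the filtered colimit together with effectiveness of equivalence relations. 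The paper's method is dramatically shorter and conceptually clean but imports the sheaf-topos machinery as a black box; yours is elementary and self-contained, and the kernel-pair identity you isolate is a useful fact in its own right. Two small cautions when you write this up: your ``$j\geq i$'' should really range over the slice $i/I$ (spans $i\to l\leftarrow i$ with distinct legs are absorbed using filteredness of $I$), and the identification of the directed union $\bigcup_l Y_i\times_{Y_l}Y_i$ with the filtered colimit of these subobjects deserves a line of justification---e.g.\ that for a filtered diagram of monomorphisms into a fixed object, the coequalizer construction of Theorem~\ref{hasfiltcolim} collapses to the join in the subobject lattice.
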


\begin{proof}
Let $J$ be the Grothendieck-topology on $\mathcal{D}$ generated by the $<\lambda $ effective epimorphic families. This topology is subcanonical and the Yoneda-embedding $y:\mathcal{D}\to Sh(\mathcal{D},J)$ is a (fully faithful) $\lambda $-geometric functor (see e.g.~Proposition 6.1.4.~and 6.1.6.~of \cite{kamsmathesis}). By the above theorem it preserves and reflects finite limits and filtered colimits of size $<\lambda $. But in $Sh(\mathcal{D},J)$ finite limits commute with filtered colimits.
\end{proof}

We recall the following definition from \cite{barrex}:

\begin{definition}
A category $\mathcal{D}$ is $\kappa $-regular if it is regular, has $<\kappa $-limits and the transfinite cocomposition of $<\kappa $-many effective epimorphisms is an effective epi, i.e.~given an ordinal $\alpha <\kappa $ and a limit-preserving functor $F:\alpha ^{op}\to \mathcal{D}$, such that each $i\to j$ arrow in $\alpha $ is mapped to an effective epi $Fi \twoheadleftarrow Fj$, the limit cone $(Fi\leftarrow F\alpha )_{i<\alpha }$ consists of effective epis. 

$\mathcal{D}$ is $\kappa $-exact if it is $\kappa $-regular and Barr-exact (i.e.~internal equivalence relations have a quotient).
\end{definition}

\begin{proposition}
In a $\kappa $-regular category the product of $<\kappa $-many effective epimorphisms is an effective epi.
\end{proposition}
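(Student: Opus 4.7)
The plan is to proceed by transfinite induction on $\gamma < \kappa$, the size of the family of effective epis $(f_i : x_i \twoheadrightarrow y_i)_{i<\gamma}$. For $\gamma = 0$ the empty product is the identity on the terminal object, and for a successor $\gamma = \gamma' + 1$, writing $g = \prod_{i<\gamma'} f_i$ (effective epi by induction), the map $g \times f_{\gamma'}$ factors as $(g \times \mathrm{id}) \circ (\mathrm{id} \times f_{\gamma'})$, each factor being the pullback of an effective epi along a product projection and therefore effective epi by pullback-stability in the regular category $\mathcal{D}$; composition preserves effective epis.

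For a limit ordinal $\gamma$, the plan is to exhibit $\prod_{i<\gamma} f_i$ as the limit-cone map of a suitable $\gamma^{op}$-indexed inverse system so that the transfinite-cocomposition clause of $\kappa$-regularity applies. I will define $F : \gamma^{op} \to \mathcal{D}$ by
\[
F(\beta) \;=\; \prod_{i<\beta} x_i \;\times\; \prod_{\beta \leq i < \gamma} y_i
\]
with transition $F(\beta_2) \to F(\beta_1)$ (for $\beta_1 \leq \beta_2 < \gamma$) equal to the identity on the factors indexed by $i < \beta_1$ and $i \geq \beta_2$, and to $f_i$ on each factor with $i \in [\beta_1,\beta_2)$. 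A coordinate-wise check shows $F$ is functorial, preserves limits (so $F(\lambda) = \lim F|_{\lambda^{op}}$ for every limit $\lambda < \gamma$), and has external limit $\prod_{i<\gamma} x_i$ whose cone map to $F(0) = \prod_{i<\gamma} y_i$ is exactly $\prod_{i<\gamma} f_i$.

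To invoke $\kappa$-regularity I must verify each transition in $F$ is effective epi. The transition $F(\beta_2) \to F(\beta_1)$ is (up to reordering of factors) the pullback of $\prod_{\beta_1 \leq i < \beta_2} f_i$ along the projection $F(\beta_1) \to \prod_{\beta_1 \leq i < \beta_2} y_i$; the pulled-back map is a product of $\beta_2 - \beta_1 < \gamma$ effective epis, effective epi by the outer inductive hypothesis, and pullback-stability finishes. The $\kappa$-regular hypothesis then yields that the limit cone map $\prod_{i<\gamma} x_i \to \prod_{i<\gamma} y_i$ is effective epi. The main obstacle is the apparent circularity in this last verification: showing that transitions of $F$ are effective epi seems to need the very claim being proved, which is precisely what forces the whole argument to be phrased as a transfinite induction on $\gamma$ — by the time we address the $\gamma$-product all strictly smaller products are already in hand.
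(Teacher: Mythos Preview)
Your proof is correct and follows the same construction as the paper: both build the chain $F(\beta)=\prod_{i<\beta}x_i\times\prod_{\beta\le i<\gamma}y_i$ and apply the transfinite-cocomposition clause of $\kappa$-regularity. The paper's version is terser---it simply draws the chain, observes that each successor step $F(\beta+1)\to F(\beta)$ is a pullback of the single map $f_\beta$ (hence effective epi by pullback-stability), and stops---whereas your explicit transfinite induction on $\gamma$ supplies the detail the paper leaves implicit, namely why the non-successor transitions $F(\beta_2)\to F(\beta_1)$ are also effective epi, which the stated definition of $\kappa$-regularity requires.
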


\begin{proof}
The following is a limit diagram:
\[\begin{tikzcd}
	&&& {x_0\times x_1\times x_2\times \dots} \\
	\\
	{y_0\times y_1\times y_2\times \dots} && {x_0\times y_1\times  y_2\times \dots} & {x_0\times x_1\times y_2\times \dots}
	\arrow["{f_0\times 1\times 1\times \dots }"', two heads, from=3-3, to=3-1]
	\arrow["{1\times f_1\times 1\times \dots}"', two heads, from=3-4, to=3-3]
	\arrow["{f_0\times f_1\times f_2\times \dots}"', curve={height=12pt}, from=1-4, to=3-1]
	\arrow["{1\times f_1\times f_2\times \dots}"{description, pos=0.4}, from=1-4, to=3-3]
	\arrow["{1\times 1\times f_2\times \dots}"{description, pos=0.7}, from=1-4, to=3-4]
\end{tikzcd}\]
and the horizontal maps are effective epimorphisms by pullback-stability. 
\end{proof}

We repeat Definition 2.1.2.~of \cite{inflog} but with allowing different cardinals for the $\exists $ and for the $\vee $ properties.

\begin{definition}
\label{klpretop}
Fix regular cardinals $\lambda \geq \kappa \geq \aleph _0$.

Let $P$ be a rooted tree such that the degree of any vertex is $<\lambda $ and the length of any branch is $<\kappa $. Call these $(\lambda ,\kappa )$-trees. We say that a diagram $F:P^{op}\to \mathcal{D}$ is proper if the $1$-neighbourhood of any vertex is mapped to a jointly epimorphic family and $F$ restricted to any branch is limit-preserving. We say that $F$ is completely proper if it is proper and the transfinite cocomposition of the branches form an effective epimorphic family on the root. $\mathcal{D}$ is $(\lambda ,\kappa )$-regular if for any $(\lambda ,\kappa )$-tree $T$, any proper diagram $F:T^{op}\to \mathcal{D}$ is completely proper.

Let $\mathcal{D}$ be a $\lambda $-geometric category (resp.~a $\lambda $-pretopos). It is $(\lambda ,\kappa )$-coherent (resp.~it is a $(\lambda ,\kappa )$-pretopos) if it has $<\kappa $-limits, for any object $x$ in $Sub_{\mathcal{D}}(x)$ the $<\kappa $ meets distribute over $<\lambda $ joins, and it is $(\lambda ,\kappa )$-regular.
\end{definition}

We proved:

\begin{theorem}
\label{klpretopult}
Let $\lambda \geq \kappa \geq \aleph _0$ be regular cardinals, such that $\gamma <\kappa \Rightarrow 2^{\gamma }<\lambda $. Let $\mathcal{D}$ be a weakly Boolean $(\lambda ,\kappa )$-pretopos, in which $\{0\}\subseteq Sub_{\mathcal{D}}(1)$ is a $\kappa $-prime ideal: i.e.~if the intersection of $<\kappa $-many subobjects of $1$ is $0$ then one of them was $0$. 

Then for any coherent category $\mathcal{C}$, the set $\mathbf{Coh}(\mathcal{C},\mathcal{D})\hookrightarrow \mathcal{D}^{\mathcal{C}}$ is closed under $<\kappa $-ultraproducts. Moreover given subobjects $A_i\hookrightarrow X_i\hookleftarrow B_i$ in $\mathcal{D}$, we have $\faktor{\prod _i A_i}{U}\leq \faktor{\prod _i B_i}{U}$ iff $\{i:A_i\leq B_i\}\in U$.
\end{theorem}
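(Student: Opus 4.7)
The plan is to verify each of the six hypotheses of Theorem \ref{los}, with condition $4$ replaced by $4'$ as observed in the Remark following that theorem; the conclusion then follows immediately by invoking Łoś.

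Two hypotheses are supplied directly in the statement: $\mathcal{D}$ is weakly Boolean (condition $6$) and $\{0\} \subseteq \mathrm{Sub}_{\mathcal{D}}(1)$ is a $\kappa$-prime ideal (condition $4'$). Two more are immediate from Definition \ref{klpretop}: a $(\lambda,\kappa)$-pretopos has $<\kappa$-limits, which gives condition $1$ ($<\kappa$-products); and the defining distributivity of $<\kappa$-meets over $<\lambda$-joins in each $\mathrm{Sub}_{\mathcal{D}}(x)$ specialises, since $\lambda \geq \kappa \geq \aleph_0$, to distribution over binary unions, yielding condition $2$. For condition $5$, a $(\lambda,\kappa)$-pretopos is in particular a $\lambda$-pretopos, so Theorem \ref{hasfiltcolim} produces filtered colimits of size $<\lambda$ and Theorem \ref{commfinlim} shows they commute with finite limits.

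The remaining step is condition $3$: the $<\kappa$-product of effective epimorphisms is effective epi. By the Proposition preceding Definition \ref{klpretop}, it suffices to show that $\mathcal{D}$ is $\kappa$-regular, that is, transfinite cocompositions of length $<\kappa$ of effective epis are effective epi. Here I would invoke $(\lambda,\kappa)$-regularity: take $P$ to be a single branch of ordinal length $\alpha < \kappa$, and let $F : P^{op} \to \mathcal{D}$ send the arrows to the given effective epimorphisms. The $1$-neighbourhood of any non-root vertex consists of a single arrow in $\mathcal{D}$ (the effective epi out of a successor, or the limit leg at a limit stage), so the "jointly epi at each vertex" condition is automatic; limit-preservation along the unique branch is then just the definition of the transfinite cocomposition itself. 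Hence $F$ is proper, and by $(\lambda,\kappa)$-regularity completely proper, which is exactly the assertion that the cocomposition is effective epi. This yields $\kappa$-regularity and thus condition $3$.

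With all six hypotheses verified, Theorem \ref{los} applies and delivers both the closure of $\mathbf{Coh}(\mathcal{C},\mathcal{D})$ under $<\kappa$-ultraproducts and the Łoś-style characterisation of subobject inclusion. I do not anticipate a conceptual obstacle; the one bookkeeping point requiring care is the orientation convention between $P$ and $\mathcal{D}$ in Definition \ref{klpretop} when specialising to a single branch (so that the effective epis sit where the "completely proper" conclusion places them), but this is routine.
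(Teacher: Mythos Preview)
Your proposal is correct and matches the paper's approach exactly: the paper's proof is literally the two words ``We proved:'', signalling that the theorem is assembled from the preceding pieces (Theorem \ref{los}, the Remark replacing $4$ by $4'$, Theorems \ref{hasfiltcolim}--\ref{commfinlim}, the Proposition on products of effective epis, and Definition \ref{klpretop}), which is precisely the checklist you run through.

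One small point worth tightening in the $\kappa$-regularity step: the definition of $\kappa$-regular asks that \emph{every} leg $F\alpha \to Fi$ of the limit cone be effective epi, whereas ``completely proper'' for a single-branch tree only literally gives the leg to the root $F0$. This is harmless: for each $i<\alpha$ apply $(\lambda,\kappa)$-regularity to the cofinal subchain rooted at $i$ (same limit, still a $(\lambda,\kappa)$-tree, still proper) to get $F\alpha \to Fi$ effective epi. Alternatively, note that the proof of the Proposition only uses the leg to position $0$ anyway. Either way the argument goes through without difficulty.
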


\begin{definition}
\label{semancplte}
Let $M,N:\mathcal{C}\to \mathcal{D}$ be coherent functors. We say that $M$ and $N$ are elementary equivalent if for any subobjects $a\hookrightarrow x\hookleftarrow b$ in $\mathcal{C}$: $Ma\subseteq Mb$ iff $Na\subseteq Nb$. $\mathcal{C}\not \simeq *$ is semantically complete if any two $\mathcal{C}\to \mathbf{Set}$ coherent functors are elementary equivalent.
\end{definition}

\begin{proposition}
Let $\mathcal{C}$ be Boolean and $M,N:\mathcal{C}\to \mathcal{D}$ be coherent functors. If there is an elementary natural transformation $\alpha : M\Rightarrow N$ then $M$ and $N$ are elementary equivalent.
\end{proposition}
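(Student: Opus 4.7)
The plan is to reduce the inequality $Ma\le Mb$ via Booleanness to a vanishing condition of the form ``$M\varphi=\emptyset$'', and then to use elementarity on a single sort, namely on subobjects of $1$, where the pullback condition becomes trivial.

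First I would reduce the statement. For $a,b\hookrightarrow x$ in the Boolean category $\mathcal{C}$, let $\neg b$ denote the complement of $b$ in $x$. Since $M$ is coherent, it preserves finite meets, binary unions and the strict initial object; from $a=(a\cap b)\cup (a\cap \neg b)$ and $b\cap \neg b=\emptyset$ one reads off that $Ma\le Mb$ iff $M(a\cap \neg b)=\emptyset$, and analogously for $N$. So it is enough to prove: for every subobject $c\hookrightarrow x$ of $\mathcal{C}$, we have $Mc=\emptyset$ if and only if $Nc=\emptyset$.

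Next I would reduce further to subobjects of $1$. Coherent categories have a strict initial object (every arrow into $\emptyset$ is iso), and $M,N$ preserve effective-epi--mono factorisations, so if $!\colon x\to 1$ is the unique map, then $Mc=\emptyset$ iff $M(\exists_!c)=\emptyset$, and similarly for $N$. Replacing $c$ by $\exists_!c$, I may assume $c\hookrightarrow 1$.

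Finally, coherent functors preserve the terminal object, so $M1=1=N1$, and the component $\alpha_1\colon M1\to N1$ is forced to be $\mathrm{id}_1$. Elementarity of $\alpha$ applied to the mono $c\hookrightarrow 1$ then says that the square whose vertical edges are $\alpha_c$ and $\alpha_1=\mathrm{id}_1$ and whose horizontal edges are $Mc\hookrightarrow 1$ and $Nc\hookrightarrow 1$ is a pullback; consequently $Mc\cong Nc$ as subobjects of $1$, and in particular one is $\emptyset$ iff the other is, which closes the reduction.

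There is no serious obstacle here: the content of the argument is simply the observation in the last step that elementarity collapses to an isomorphism of subobjects on the terminal, while Booleanness (complementation) together with strictness of $\emptyset$ reduces the general case of $Ma\le Mb$ to such subobjects of $1$.
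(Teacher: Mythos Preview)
Your proof is correct and follows essentially the same route as the paper: reduce $Ma\le Mb$ via Booleanness to $M(a\cap\neg b)=\emptyset$, pass to the image $\exists_!(a\cap\neg b)\hookrightarrow 1$, and then use elementarity at this mono together with $\alpha_1=\mathrm{id}_1$ to conclude that $M(\exists_!c)\cong N(\exists_!c)$. The paper merely presents these steps in a different order (first establishing $Mx=\emptyset\Leftrightarrow Nx=\emptyset$ for arbitrary objects via the pullback square at $\exists_!x\hookrightarrow 1$, then invoking Booleanness), but the content is identical.
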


\begin{proof}
Given any $x\in \mathcal{C}$, we have $Nx=\emptyset $ iff $Mx=\emptyset $ as 
\[\begin{tikzcd}
	{M(\exists _!x)} && M1 \\
	&  \\
	{N(\exists _!x)} && N1
	\arrow[hook, from=3-1, to=3-3]
	\arrow[Rightarrow, no head, from=1-3, to=3-3]
	\arrow[hook, from=1-1, to=1-3]
	\arrow["\cong"', from=1-1, to=3-1]
\end{tikzcd}\]
is a pullback. But as $\mathcal{C}$ is Boolean we can use $Na\subseteq Nb$ iff $N(a\cap \neg b)=\emptyset $.
\end{proof}

Our next goal is to show that if $\mathcal{C}$ is semantically complete then $\mathbf{Coh}_e(\mathcal{C},\mathbf{Set})$ has the joint embedding property.

\begin{theorem}
Let $M:\mathcal{C}\to \mathbf{Set} $ be a coherent functor and $U\subseteq \mathcal{P}(I)$ be a $\kappa $-regular ultrafilter ($\kappa \geq |\mathcal{C}|\cdot \aleph _0$). Then $\faktor{M^I}{U}$ is $\kappa ^+$-universal, i.e.~given $N:\mathcal{C}\to \mathbf{Set}$ which is elementary equivalent to $M$ and $|N|=\sum _{x\in \mathcal{C}}|Nx|\leq \kappa $, there is an elementary natural transformation $\alpha :N\Rightarrow M $.
\end{theorem}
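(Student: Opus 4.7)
The proof is the positive-model-theory analogue of Keisler's theorem that $\kappa$-regular ultrapowers are $\kappa^+$-saturated. First I fix a filtration $(\mathbb{S},(\iota_x)_x)$ of $\mathcal{C}$ via Proposition~\ref{filtration}, so that by Theorem~\ref{TV1} and Proposition~\ref{filtration} it suffices to build, for each $s\in\mathbb{S}$, a monomorphism $\alpha_s:Ns\hookrightarrow \faktor{(Ms)^I}{U}$ whose collection of images satisfies the (stronger) Tarski-Vaught criterion; monicity of the $\alpha_s$ will then be automatic from Proposition~\ref{mono}, and the extension of $\alpha$ to arrows outside $\mathbb{S}$ is determined by the pullback prescription of Proposition~\ref{filtration}.

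Since $|N|\leq\kappa$, I would enumerate $\bigsqcup_{s\in\mathbb{S}}Ns=(n_\xi)_{\xi<\kappa}$ with $n_\xi\in Ns_\xi$, and list all ``signed'' constraints $\{\Phi_\alpha\}_{\alpha<\kappa}$, each of the form $\Phi_\alpha=(F_\alpha,\varphi_\alpha,\epsilon_\alpha)$ where $F_\alpha\subseteq\kappa$ is finite, $\varphi_\alpha\hookrightarrow \prod_{\xi\in F_\alpha}s_\xi$ is a subobject of $\mathcal{C}$, and $\epsilon_\alpha\in\{+,-\}$ records whether $(n_\xi)_{\xi\in F_\alpha}\in N\varphi_\alpha$; since $|\mathcal{C}|\cdot\aleph_0\leq\kappa$ there are at most $\kappa$ such constraints. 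Using $\kappa$-regularity, fix $\{X_\alpha\}_{\alpha<\kappa}\subseteq U$ with each $i\in I$ lying in only finitely many $X_\alpha$, and put $J(i)=\{\alpha:i\in X_\alpha\}$. At each $i$ I would jointly realize the finite constraint set $\{\Phi_\alpha:\alpha\in J(i)\}$ in $M$, obtaining $m_\xi(i)\in Ms_\xi$ for every mentioned $\xi$ (default otherwise, using that $Ms_\xi\neq\emptyset$ whenever $Ns_\xi\neq\emptyset$ by elementary equivalence). By the Łoś-lemma (Theorem~\ref{los}), for each $\alpha$ one has $\{i:\Phi_\alpha\text{ holds at }(m_\xi(i))\}\supseteq X_\alpha\in U$, so $\Phi_\alpha$ is realized at $([m_\xi]_U)_\xi$ in the ultrapower. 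Setting $\alpha_{s_\xi}(n_\xi)=[m_\xi]_U$ then preserves positive formulas (from the $+$-constraints) and reflects them (from the $-$-constraints); the $-$-reflection applied to diagonal subobjects $\Delta_s\hookrightarrow s\times s$ gives injectivity of $\alpha_s$, and applied to image subobjects $\exists_\pi\varphi$ gives the Tarski-Vaught surjectivity.

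The hard part, and the step where this proof genuinely diverges from Keisler's classical argument, is the joint realizability in $M$: classically one only needs to realize positive formulas since negations are absorbed into the complete type, but in the coherent setting positive realization alone does not force elementarity. The bridge is the observation that any mixed requirement ``satisfy all $\varphi_k$'s and avoid all $\psi_l$'s'' is equivalent to the single subobject-inequality failure $\bigcap\varphi_k\not\leq \bigcup\psi_l$: the tuple $(n_\xi)_{\xi\in F}$ witnesses this in $N$ (after pulling the constraints back along projections to the common product $\prod_{\xi\in F}s_\xi$), so by elementary equivalence (Definition~\ref{semancplte}) the inequality also holds in $M$, producing the desired joint witness. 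Once this realizability is in place, $\kappa$-regularity and Łoś assemble the finite witnesses into the global one, and the elementary coherent natural transformation $\alpha:N\Rightarrow \faktor{M^I}{U}$ is then delivered by the Tarski-Vaught machinery of Theorem~\ref{TV1}.
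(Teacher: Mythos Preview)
Your proposal is correct and follows essentially the same route as the paper: index the finitary positive/negative constraints coming from $N$ by a family of size $\leq\kappa$, use $\kappa$-regularity to ensure each $i\in I$ sees only finitely many of them, transfer the resulting inequality $\bigcap\varphi_k\not\leq\bigcup\psi_l$ from $N$ to $M$ via elementary equivalence to pick coordinate witnesses, and then check elementarity on the filtration. The only cosmetic difference is that the paper packages the constraints as the ``elementary diagram'' $\Delta_e(N)$ and verifies elementarity directly via Proposition~\ref{filtration} (pullback squares over $\iota_x$), whereas you phrase it through signed constraints and invoke the Tarski--Vaught machinery; since $N$ is already given as a coherent functor, the Tarski--Vaught step is redundant and Proposition~\ref{filtration} alone suffices, but the underlying argument is identical.
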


\begin{proof}
Fix a filtration $\langle \mathbb{S}\subseteq Ob(\mathcal{C}), (x\xhookrightarrow{\iota _x} s_{x,1}\times \dots s_{x,n_x})_{x\in \mathcal{C}} \rangle $. The (positive) elementary diagram $\Delta _e(N)$ of $N$ (wrt.~$\mathbb{S}$) is the set of tuples 
\begin{multline*}
    \varphi (s_1\times \dots \times s_m,(u_i)_i,(\vec{a}_{\varepsilon , 1},\dots \vec{a}_{\varepsilon, n_{\varepsilon }} )_{\varepsilon })  = \\ \langle s_1\times \dots \times s_m, (u_i\hookrightarrow \vec{s})_{i<k}, (\vec{a}_{\varepsilon ,i}\in N\vec{s})_{\varepsilon \in \{*,c\}^k , 1\leq i\leq n_{\varepsilon }} \rangle 
\end{multline*}
with $s_i\in \mathbb{S}$,  $\vec{a}_{\varepsilon ,i} \in Nu_1^{\varepsilon (1)} \cap \dots \cap Nu_k^{\varepsilon (k)}$, where $Nu^*=Nu$ and $Nu^c=N\vec{s}\setminus Nu$. (I.e.~we fix finitely many definable subsets in $N\vec{s}$ and note the position of finitely many tuples in the resulting partition.) Clearly $|\Delta _e(N)|=|N|\cdot |\mathcal{C}|\cdot \aleph _0\leq \kappa $.

Let $E\subseteq U$ be a collection of ultrasets such that $|E|=\kappa $ and for all $i\in I$: $\{J\in E : i\in J \}$ is finite. Fix an injection $f:\Delta _e(N)\to E$. Then $\gamma (i)=\{\varphi \in \Delta _e(N) : i\in f\varphi \}$ is finite. Given a fixed $\varphi $, the set $\{i\in I:\varphi \in \gamma (i)\}=f\varphi $ is an ultraset. 

If $Ns=\emptyset $ for some $s$, then $\alpha _s=\emptyset :\emptyset \to \emptyset $. Fix $i\in I$. We concatenate all tuples in $\gamma (i)$ to get just one long sequence of elements $a_1,\dots a_t\in Ns_1\times \dots \times Ns_t$. Of course $a_i=a_j$ can happen, so to remember this we introduce a subobject $N\delta \subseteq Ns_1\times \dots \times Ns_t$ which is the corresponding diagonal. Now we pull back all the prescribed subsets to get subsets of $Ns_1\times \dots \times Ns_t$, e.g.~if $(a_1,a_3)\in Nu\subseteq Ns_1\times Ns_3$ is in $\gamma (i)$ then we replace it with $(a_1,a_2,a_3,\dots )\in Nu\times Ns_2\times Ns_4\times \dots $, and similarly if $(a_1,a_3)\not \in Nu\subseteq Ns_1\times Ns_3$ is in $\gamma (i)$ then we replace it with $(a_1,a_2,a_3,\dots )\not \in Nu\times Ns_2\times Ns_4\times \dots $. So we only have to care about the position of one sequence wrt.~finitely many definable subsets of $N\vec{s}$, let's say it lies in $Nu_1=N\delta , Nu_2,\dots Nu_r$ and it is in the complement of $Nv_1,\dots Nv_{r'}$. 

The existence of our sequence $\vec{a}$ proves $Nu_1\cap \dots \cap Nu_r \not \subseteq Nv_1\cup \dots \cup Nv_{r'}$ hence $Mu_1\cap \dots \cap Mu_r \not \subseteq Mv_1\cup \dots \cup Mv_{r'}$. Take any $\vec{b}\in Mu_1\cap \dots \cap Mu_r\cap Mv_1^c \cap \dots \cap Mv_{r'}^c$. We define $b_j=(\alpha _{s_j}(a_j))_i$. As any $\varphi \in \Delta _e(N)$ appears in ultra-many $\gamma (i)$'s, we have defined $(\alpha _{s_j}(a_j))_i$ for ultra-many $i$'s and it was defined for each $a\in N_s$ (for each $s$).

By Proposition \ref{filtration} we shall check that given any $x\in \mathcal{C}$ the dashed arrow in 

\[
\adjustbox{scale=0.9}{
\begin{tikzcd}
	Nx && {Ns_{x,1}\times \dots \times Ns_{x,n_x}} \\
	\\
	{\faktor{M^I}{U} \ x} && {\faktor{M^I}{U}\ s_{x,1}\times \dots \times \faktor{M^I}{U}\ s_{x,n_x}}
	\arrow["{N \iota _x}", hook, from=1-1, to=1-3]
	\arrow["{\alpha _{x,1}\times \dots \times \alpha _{x,n_x}}", from=1-3, to=3-3]
	\arrow["{\faktor{M^I}{U}\ \iota _x}"', hook, from=3-1, to=3-3]
	\arrow[dashed, from=1-1, to=3-1]
\end{tikzcd}
}
\]
exists and the square is a pullback. If some $Ns_{x,i}=\emptyset $ then there's nothing to check. Otherwise it follows from the fact that for any $\vec{a}\in N\vec{s}$, the prescription $a\in Nx$ or $a\in Nx^c$ appears in ultra-many $\gamma (i)$'s.
\end{proof}

As a corollary we got:

\begin{theorem}
\label{semancplthasjep}
If $\mathcal{C}$ is semantically complete then $\mathbf{Coh}_e(\mathcal{C},\mathbf{Set})$ has the joint embedding property. (When $\mathcal{C}$ is Boolean the converse also holds.)
\end{theorem}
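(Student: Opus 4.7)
The plan is to use the universality theorem stated immediately before the target, together with the fact (already recorded in the diagonal-elementarity result of Section 4 specialized to $\mathcal{D}=\mathbf{Set}$) that $\Delta_M : M \Rightarrow \faktor{M^I}{U}$ is elementary.

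For the forward direction, suppose $\mathcal{C}$ is semantically complete and take arbitrary $M,N \in \mathbf{Coh}(\mathcal{C},\mathbf{Set})$. Choose a cardinal $\kappa \geq |\mathcal{C}| \cdot \aleph_0 \cdot |M| \cdot |N|$, a set $I$ admitting a $\kappa$-regular ultrafilter $U \subseteq \mathcal{P}(I)$, and set $P = \faktor{M^I}{U}$. By semantic completeness, $M$ and $N$ are elementary equivalent. The diagonal $\Delta_M : M \Rightarrow P$ is elementary (by the diagonal-is-elementary theorem applied to $\mathbf{Set}$, which is easily checked to satisfy the hypotheses of Theorem~\ref{los}, being weakly Boolean with all products and filtered colimits commuting with finite limits). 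Since $P$ is elementary equivalent to $M$ (elementary embeddings preserve equivalence), and hence to $N$, and $|N| \leq \kappa$, the previous theorem produces an elementary natural transformation $\alpha : N \Rightarrow P$. The pair $(\Delta_M, \alpha)$ witnesses JEP for $(M,N)$.

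For the converse when $\mathcal{C}$ is Boolean, suppose $\mathbf{Coh}_e(\mathcal{C},\mathbf{Set})$ has JEP and let $M, N : \mathcal{C} \to \mathbf{Set}$ be any two coherent functors. By JEP there is a third functor $P$ with elementary natural transformations $M \Rightarrow P$ and $N \Rightarrow P$. Applying the proposition that in the Boolean case the existence of an elementary natural transformation entails elementary equivalence, we get that $M$ is elementary equivalent to $P$ and $N$ is elementary equivalent to $P$; elementary equivalence is transitive (and symmetric), so $M$ and $N$ are elementary equivalent. Hence $\mathcal{C}$ is semantically complete.

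The only non-routine ingredient is invoking the universality theorem correctly: one must pick $\kappa$ large enough to bound both $|\mathcal{C}| \cdot \aleph_0$ and $|N|$ simultaneously, and one must know that $\kappa$-regular ultrafilters exist on sufficiently large index sets (standard). I do not foresee an obstacle beyond verifying that $\mathbf{Set}$ satisfies the hypotheses of the $\mathcal{D}$-side results used for elementarity of $\Delta$, which is essentially a check of weak Booleanness and commutation of filtered colimits with finite limits.
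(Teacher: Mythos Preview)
Your proposal is correct and is exactly the argument the paper has in mind: the theorem is stated there as an immediate corollary of the preceding $\kappa^+$-universality result, and you have simply written out the intended deduction (including the Boolean converse via the earlier proposition that an elementary transformation between Boolean-domain models forces elementary equivalence). The only superfluous detail is bounding $|M|$ by $\kappa$; you need $\kappa \geq |\mathcal{C}|\cdot\aleph_0$ for the universality theorem and $\kappa \geq |N|$ to embed $N$, while $M$ embeds via the diagonal regardless of size.
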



\section{Measuring positive closedness}

Recall the definition of the type-space functor $S_{\mathcal{C}}:\mathcal{C}\xrightarrow{Sub_{\mathcal{C}}} \mathbf{DLat}^{op}\xrightarrow{Spec} \mathbf{Spec}$ associated to a coherent category $\mathcal{C}$ (cf.~Definition \ref{typespacedef}). Given a model (i.e.~a coherent functor) $M:\mathcal{C}\to \mathbf{Set}$, there is a natural transformation 
\[\begin{tikzcd}
	{\mathcal{C}} &&&& {\mathbf{Set}} \\
	&& {\mathbf{Spec}}
	\arrow[""{name=0, anchor=center, inner sep=0}, "M", curve={height=-18pt}, from=1-1, to=1-5]
	\arrow["{S_{\mathcal{C}}}"', curve={height=12pt}, from=1-1, to=2-3]
	\arrow["U"', curve={height=12pt}, from=2-3, to=1-5]
	\arrow["tp", shorten <=6pt, Rightarrow, from=0, to=2-3]
\end{tikzcd}\]
whose component $tp_x:M(x)\Rightarrow \overline{S_{\mathcal{C}}}(x)$ maps $a\in M(x)$ to $tp(a/\emptyset ):=S_{M,x}(\widehat{a})$ (where $U$ is the forgetful functor, $\overline{S_{\mathcal{C}}}$ is $U\circ S_{\mathcal{C}}$, $S_{M,x}$ maps an ultrafilter of $Sub_{\mathbf{Set}}(Mx)$ to its preimage along $M:Sub_{\mathcal{C}}(x)\to Sub_{\mathbf{Set}}(Mx)$, finally $\widehat{a}$ is the principal ultrafilter on $a$).

To check that $tp$ is a natural transformation we need $S_{M,y}(\widehat{M(f)(a)})=$ $(f^*)^{-1}(S_{M,x}(\widehat{a}))$ which is $M^{-1}(M(f)^*)^{-1}(\widehat{a})=(f^*)^{-1}M^{-1}(\widehat{a})$ which follows by the commutativity of
\[\begin{tikzcd}
	{Sub_{\mathcal{C}}(x)} && {Sub_{\mathcal{C}}(y)} \\
	\\
	{Sub_{\mathbf{Set}}(Mx)} && {Sub_{\mathbf{Set}}(My)}
	\arrow["{f^*}"', from=1-3, to=1-1]
	\arrow["{(Mf)^*}"', from=3-3, to=3-1]
	\arrow["M", from=1-3, to=3-3]
	\arrow["M"', from=1-1, to=3-1]
\end{tikzcd}\]

Note that for an elementary natural transformation $\alpha :M\Rightarrow N$ the following commutes:

\[\begin{tikzcd}
	Mx && {S_{\mathbf{Set}}(Mx)} \\
	&&&& {S_{\mathcal{C}}(x)} \\
	Nx && {S_{\mathbf{Set}}(Nx)}
	\arrow["{\alpha _x}"', from=1-1, to=3-1]
	\arrow["{\widehat{\phantom{x}}}", from=1-1, to=1-3]
	\arrow["{(\alpha _x^*)^{-1}}", from=1-3, to=3-3]
	\arrow["{\widehat{\phantom{x}}}", from=3-1, to=3-3]
	\arrow["{M^{-1}}", from=1-3, to=2-5]
	\arrow["{N^{-1}}"', from=3-3, to=2-5]
\end{tikzcd}\]
which means $tp_N \circ \alpha =tp_M$.

The converse is also true:

\begin{proposition}
$\alpha :M\Rightarrow N$ is elementary iff $tp_N \circ \alpha =tp_M$.
\label{converse}
\end{proposition}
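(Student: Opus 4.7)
The plan is to establish the nontrivial direction, since the converse was verified in the paragraph preceding the statement. The strategy is to decode the condition $tp_N\circ\alpha=tp_M$ pointwise and recognise it as literally the pullback condition from Definition \ref{elementarynattr}.

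First I would unfold $tp_M(b)$ at a fixed $b\in Mx$. By construction $tp_M(b)=S_{M,x}(\widehat{b})$ is the preimage of the principal ultrafilter $\widehat{b}=\{A\subseteq Mx : b\in A\}$ along the distributive lattice map $M:Sub_{\mathcal{C}}(x)\to Sub_{\mathbf{Set}}(Mx)$, hence
\[
tp_M(b)=\{\varphi\in Sub_{\mathcal{C}}(x):b\in M\varphi\},\qquad tp_N(\alpha_x(b))=\{\varphi\in Sub_{\mathcal{C}}(x):\alpha_x(b)\in N\varphi\}.
\]
Since two prime filters agree iff they contain the same elements, the hypothesis $tp_N\circ\alpha=tp_M$ is equivalent to the biconditional $b\in M\varphi \Leftrightarrow \alpha_x(b)\in N\varphi$, quantified over all $x\in\mathcal{C}$, $b\in Mx$ and $\varphi\in Sub_{\mathcal{C}}(x)$.

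This biconditional is precisely the set-theoretic identity $M\varphi=\alpha_x^{-1}(N\varphi)$ as subobjects of $Mx$, i.e.~the naturality square of $\alpha$ at the mono $\varphi\hookrightarrow x$ is a pullback in $\mathbf{Set}$ (the $\subseteq$-inclusion is automatic from naturality, while the $\supseteq$-inclusion is the substantive content). By Definition \ref{elementarynattr} this is exactly elementarity of $\alpha$.

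The argument is essentially a translation of definitions, so there is no serious obstacle. The only point requiring care is to distinguish the prime filters living in $S_{\mathcal{C}}(x)=Spec\,Sub_{\mathcal{C}}(x)$ from the ultrafilters $\widehat{b}$ living on the powerset of $Mx$; but since $tp$ is only ever evaluated at principal ultrafilters, the decoding is entirely clean and no subtlety about non-principal prime filters of $\mathcal{P}(Mx)$ enters the proof.
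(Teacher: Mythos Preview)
Your proof is correct and is essentially the same as the paper's: both reduce the identity $tp_N\circ\alpha=tp_M$ to the pointwise condition $M\varphi=\alpha_x^{-1}(N\varphi)$, which is precisely the pullback condition defining elementarity. The only cosmetic difference is that the paper argues by contrapositive (picking a witness $s\in\alpha_x^{-1}(N\varphi)\setminus M\varphi$ when $\alpha$ fails to be elementary), while you unfold the equivalence directly.
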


\begin{proof}
If $\alpha $ is not elementary then there is a subobject $a\hookrightarrow x$ such that $\alpha _x^*(N(a\hookrightarrow x))\setminus M(a\hookrightarrow x)\neq \emptyset$. If $s$ is an element of it, then $N^{-1}(\alpha _x^*)^{-1}(\widehat{s})\neq M^{-1}(\widehat{s})$.
\end{proof}

We would like to classify all natural transformations $M\Rightarrow \overline{S_{\mathcal{C}}}$. 

\begin{definition}
Define $\mathbf{Coh}_e/\mathbf{Set}$ to be the strict 2-category whose objects are $\mathcal{C}\to \mathbf{Set}$ coherent functors, the 1-cells are pairs $(F:\mathcal{C}\to \mathcal{D}, \eta _F:M\Rightarrow NF)$ where $F$ is a coherent functor and $\eta _F$ is an arbitrary natural transformation, and the 2-cells are elementary natural transformations $\alpha :F\Rightarrow G$ making 

\[\begin{tikzcd}
	{\mathcal{C}} \\
	&&&& {\mathbf{Set}} \\
	\\
	{\mathcal{D}}
	\arrow[""{name=0, anchor=center, inner sep=0}, "F"{description}, curve={height=18pt}, from=1-1, to=4-1]
	\arrow[""{name=1, anchor=center, inner sep=0}, "G"{description}, curve={height=-18pt}, from=1-1, to=4-1]
	\arrow[""{name=2, anchor=center, inner sep=0}, "M", from=1-1, to=2-5]
	\arrow[""{name=3, anchor=center, inner sep=0}, "N"', from=4-1, to=2-5]
	\arrow["\alpha", shorten <=7pt, shorten >=7pt, Rightarrow, from=0, to=1]
	\arrow["{\eta _G}", curve={height=-6pt}, shorten <=7pt, shorten >=7pt, Rightarrow, from=2, to=3]
	\arrow["{\eta _F}"', curve={height=6pt}, shorten <=7pt, shorten >=7pt, Rightarrow, from=2, to=3]
\end{tikzcd}\]
commutative. We have an evident forgetful functor $\mathbf{Coh}_e/\mathbf{Set}\to \mathbf{Coh}_e$.
\end{definition}

\begin{definition}
\label{ldef}
We define a 2-functor $L:\mathbf{Coh}_e/\mathbf{Set} \to \mathbf{DLat}$ as follows: Given a 2-cell as pictured above, we take $LM$ to be the colimit
\[
colim((\int M)^{op}\to \mathcal{C}^{op}\to \mathbf{DLat})
\]
and $L\eta _F =L\eta _G$ to be the map induced by

\[
\adjustbox{width=\textwidth}{
\begin{tikzcd}
	& {Sub_{\mathcal{C}}(x)^a} \\
	{Sub_{\mathcal{D}}(Fx)^{\eta _{F,x}a}} && {Sub_{\mathcal{D}}(Gx)^{\eta _{G,x}a}} \\
	& {colim((\int N)^{op}\to \mathcal{D}^{op}\xrightarrow{Sub_{\mathcal{D}}}\mathbf{DLat})}
	\arrow["F"', from=1-2, to=2-1]
	\arrow["G", from=1-2, to=2-3]
	\arrow["{\alpha _x^*}"', from=2-3, to=2-1]
	\arrow[from=2-1, to=3-2]
	\arrow[from=2-3, to=3-2]
\end{tikzcd}
}
\]
\end{definition}

\begin{remark}
$LM$ is the weighted colimit $colim^M Sub_{\mathcal{C}}=Lan_Y Sub_{\mathcal{C}}(M)$.
\end{remark}

\begin{proposition}
\label{lcdef}
$L$ restricted to the fiber over $\mathcal{C} $ is 
\[
L_{\mathcal{C}}:=Coh(\mathcal{C},\mathbf{Set})\hookrightarrow \mathbf{Set}^{\mathcal{C}}\xrightarrow{Lan_Y Sub_{\mathcal{C}}} \mathbf{DLat}
\]
\end{proposition}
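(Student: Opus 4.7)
The plan is to unwind both sides of the claimed equality directly; there is no hard content, only a compatibility check between Definition~\ref{ldef} and the standard pointwise formula for left Kan extensions along Yoneda.

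I would first identify the fiber of the forgetful 2-functor $\mathbf{Coh}_e/\mathbf{Set}\to \mathbf{Coh}_e$ over $\mathcal{C}$. Its objects are coherent functors $M\colon \mathcal{C}\to \mathbf{Set}$, its 1-cells are pairs $(\mathrm{id}_{\mathcal{C}},\eta\colon M\Rightarrow N)$, and its 2-cells collapse: the only elementary $\alpha\colon \mathrm{id}_{\mathcal{C}}\Rightarrow \mathrm{id}_{\mathcal{C}}$ is the identity, after which the compatibility square forces $\eta_F=\eta_G$. So the fiber is the ordinary category $\mathbf{Coh}(\mathcal{C},\mathbf{Set})$, and its inclusion into $\mathbf{Set}^{\mathcal{C}}$ is the evident one.

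Next, I check agreement on objects. Definition~\ref{ldef} gives $LM=\mathrm{colim}\bigl((\int M)^{op}\to \mathcal{C}^{op}\xrightarrow{Sub_{\mathcal{C}}}\mathbf{DLat}\bigr)$. Writing $Y\colon \mathcal{C}^{op}\to \mathbf{Set}^{\mathcal{C}}$ for $x\mapsto \mathrm{Hom}_{\mathcal{C}}(x,-)$, every $M\in \mathbf{Set}^{\mathcal{C}}$ admits the canonical presentation $M\cong \mathrm{colim}_{(x,a)\in (\int M)^{op}}Y(x)$, so the pointwise formula for left Kan extensions yields $\mathrm{Lan}_{Y}Sub_{\mathcal{C}}(M)=\mathrm{colim}_{(x,a)\in (\int M)^{op}}Sub_{\mathcal{C}}(x)$. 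These colimits are literally the same, so $LM=\mathrm{Lan}_{Y}Sub_{\mathcal{C}}(M)$.

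For morphisms, a natural transformation $\eta\colon M\Rightarrow N$ induces a functor $\int\eta\colon \int M\to \int N$ over $\mathcal{C}$, $(x,a)\mapsto (x,\eta_x a)$. By definition $\mathrm{Lan}_{Y}Sub_{\mathcal{C}}(\eta)\colon LM\to LN$ is the induced map on colimits whose component at $(x,a)$ is the coprojection $Sub_{\mathcal{C}}(x)^{a}\to LN$ attached to the element $(x,\eta_x a)\in \int N$. Specialising Definition~\ref{ldef} to $F=G=\mathrm{id}_{\mathcal{C}}$ produces exactly this cocone, because $F$ acts as the identity on $Sub_{\mathcal{C}}(x)$ and the reindexing of the basepoint is $a\mapsto \eta_x a$. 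Hence $L(\mathrm{id}_{\mathcal{C}},\eta)=\mathrm{Lan}_{Y}Sub_{\mathcal{C}}(\eta)$. The only minor point to watch is variance: because $Sub_{\mathcal{C}}$ is contravariant, the relevant weighted colimit is indexed by $(\int M)^{op}$ rather than $\int M$. Apart from this the argument is pure bookkeeping with the universal property of the colimit, and there is no real obstacle.
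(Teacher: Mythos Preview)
Your proof is correct and follows essentially the same approach as the paper: both identify the fiber over $\mathcal{C}$ with $\mathbf{Coh}(\mathcal{C},\mathbf{Set})$, observe that on objects the definition of $LM$ as $\mathrm{colim}_{(\int M)^{op}}Sub_{\mathcal{C}}$ is exactly the pointwise Kan-extension formula, and then check on a morphism $\eta\colon M\Rightarrow N$ that both constructions produce the map determined by sending the coprojection at $(x,a)$ to the coprojection at $(x,\eta_x a)$. The paper phrases the last step via the commutative square $\mathcal{C}(x,-)\Rightarrow M\Rightarrow N$ and the fact that $\mathrm{Lan}_Y Sub_{\mathcal{C}}$ preserves colimits, but this is the same computation you carry out.
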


\begin{proof}
Given $\eta :M\Rightarrow N$ we see it as the 1-cell $(1_{\mathcal{C}},\eta )$. As $Lan_YSub_{\mathcal{C}}$ preserves colimits, $Lan_YSub_{\mathcal{C}}(\mathcal{C}(x',-)\xRightarrow{f^*} \mathcal{C}(x,-))$ is $Sub_{\mathcal{C}}(x')\xrightarrow{f^*}Sub_{\mathcal{C}}(x)$, and since for each $(x,a)\in \int M$ we have a commutative square
\[\begin{tikzcd}
	{\mathcal{C}(x,-)} && M \\
	\\
	{\mathcal{C}(x,-)} && N
	\arrow["{1_x^*}", Rightarrow, no head, from=1-1, to=3-1]
	\arrow["{1_x\mapsto a}", Rightarrow, from=1-1, to=1-3]
	\arrow["\eta", Rightarrow, from=1-3, to=3-3]
	\arrow["{1_x\mapsto \eta_x(a)}", Rightarrow, from=3-1, to=3-3]
\end{tikzcd}\]
we get that $Sub_{\mathcal{C}}(x)^a\to L_{\mathcal{C}}M \xrightarrow{Lan_YSub_{\mathcal{C}}\eta } L_{\mathcal{C}}N$ and $Sub_{\mathcal{C}}(x)^a\to L_{\mathcal{C}}M \xrightarrow{L_{\mathcal{C}}\eta } L_{\mathcal{C}}N$ both equal $Sub_{\mathcal{C}}(x)^a\xrightarrow{id} Sub_{\mathcal{C}}(x)^{\eta _x(a)}\to L_{\mathcal{C}}N$ and therefore  $L_{\mathcal{C}}(\eta )=L((1_{\mathcal{C}},\eta ))=Lan_YSub_{\mathcal{C}}(\eta )$.
\end{proof}

\begin{proposition}
Given a coherent category $\mathcal{C}$ we have isomorphisms
\[
\mathbf{DLat}(LM,K)\cong Nat(M,\mathbf{DLat}(Sub_{\mathcal{C}}(-),K))
\]
natural in $M\in \mathbf{Set}^{\mathcal{C}}$ and $K\in \mathbf{DLat}$. Therefore $L_{\mathcal{C}}$ preserves filtered colimits (since $Coh(\mathcal{C},\mathbf{Set})\hookrightarrow \mathbf{Set}^{\mathcal{C}}$ preserves them and $Lan_YSub_{\mathcal{C}}$ is a left adjoint).
\end{proposition}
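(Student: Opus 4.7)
The plan is to recognize the claimed isomorphism as the classical nerve--realization adjunction for the functor $Sub_{\mathcal{C}}:\mathcal{C}^{op}\to \mathbf{DLat}$, and then deduce the filtered-colimit preservation as a formal consequence. Concretely, I expect to exhibit
\[
Lan_Y Sub_{\mathcal{C}} \;\dashv\; \mathbf{DLat}(Sub_{\mathcal{C}}(-), -)
\]
between $\mathbf{Set}^{\mathcal{C}}$ and $\mathbf{DLat}$, which is exactly the proposition.

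First I would invoke the co-Yoneda (density) presentation $M \cong \mathrm{colim}_{(x,a) \in \int M}\, \mathcal{C}(x,-)$ in $\mathbf{Set}^{\mathcal{C}}$. By the Remark following Definition \ref{ldef}, $LM$ is the weighted colimit $\mathrm{colim}^M Sub_{\mathcal{C}} = Lan_Y Sub_{\mathcal{C}}(M)$, and by construction it is the ordinary colimit $\mathrm{colim}_{(x,a)\in (\int M)^{op}} Sub_{\mathcal{C}}(x)$ in $\mathbf{DLat}$. Applying $\mathbf{DLat}(-, K)$ turns this into a limit in $\mathbf{Set}$:
\[
\mathbf{DLat}(LM, K) \;\cong\; \lim_{(x,a)\in \int M} \mathbf{DLat}(Sub_{\mathcal{C}}(x), K).
\]
Then the ordinary Yoneda lemma identifies the right-hand factor with $\mathrm{Nat}(\mathcal{C}(x,-), \mathbf{DLat}(Sub_{\mathcal{C}}(-), K))$. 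Substituting and using that $\mathrm{Nat}(-,F)$ converts colimits in its first argument into limits, together with the Yoneda presentation of $M$ above, this limit collapses to $\mathrm{Nat}(M, \mathbf{DLat}(Sub_{\mathcal{C}}(-),K))$, yielding the desired bijection. Naturality in $K$ is immediate; naturality in $M$ follows because every step is built functorially from the Grothendieck construction $M\mapsto \int M$.

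For the second claim, the adjunction just established shows that $Lan_Y Sub_{\mathcal{C}}$ preserves all colimits, in particular filtered ones. It remains to note that $\mathbf{Coh}(\mathcal{C},\mathbf{Set})\hookrightarrow \mathbf{Set}^{\mathcal{C}}$ preserves filtered colimits: these are computed pointwise, and in $\mathbf{Set}$ filtered colimits commute with finite limits and preserve effective-epi/mono factorizations and finite unions, so the pointwise filtered colimit of coherent functors is again coherent. Composing gives that $L_{\mathcal{C}}$ preserves filtered colimits.

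The only real delicacy will be bookkeeping the variances: $Sub_{\mathcal{C}}$ is contravariant on $\mathcal{C}$, $M$ is covariant, and $\int M$ projects down to $\mathcal{C}$, so one must be careful that the opposites line up correctly when applying the density formula and when converting $\mathrm{Nat}$ of a colimit into a limit. Once these variances are pinned down, everything else is a purely formal manipulation with (co)limits and Yoneda, so I do not expect any substantive obstacle.
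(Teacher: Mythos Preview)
Your proposal is correct and is precisely the standard proof of the nerve--realization adjunction; the paper's own proof is simply a citation to Theorem I.5.2 of \cite{sheaves}, which is exactly this result. In effect you have spelled out the argument that the paper outsources to Mac Lane--Moerdijk, so there is no meaningful difference in approach.
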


\begin{proof}
This can be found e.g.~as Theorem I.5.2.~of \cite{sheaves}.
\end{proof}

\begin{remark}
Taking $K=\mathbf{2}$ we get an isomorphism $Spec(LM)\cong Nat(M,\overline{S_{\mathcal{C}}})$, using the natural isomorphism $Spec\cong \mathbf{DLat}(-,\mathbf{2})$. Given a prime filter $p=(LM\xrightarrow{P} \mathbf{2})^{-1}(1)$ the corresponding natural transformation $\widehat{p}:M\Rightarrow \overline{S_{\mathcal{C}}}$ is defined as $\widehat{p}_x(a)=(Sub_{\mathcal{C}}(x)^a \to LM\xrightarrow{P}\mathbf{2})^{-1}(1)$.

In particular if $p\subseteq q \in Spec(LM)$ are prime filters then $\widehat{p}_x(a)\subseteq \widehat{q}_x(a)$ for all $x\in \mathcal{C}$ and $a\in Mx$.
\label{correspondence}
\end{remark}

\begin{proposition}
\label{lmcomp}
$LM$ is computed as follows: its underlying set is 
\[
\faktor{\bigsqcup _{a\in Mx}Sub_{\mathcal{C}}(x)}{\sim}
\]
where $u\hookrightarrow x^a \sim v\hookrightarrow y^b$ iff there exists $\varphi \hookrightarrow x\times y$ such that $(a,b)\in M\varphi $ and $\varphi \cap u\times y =\varphi \cap x\times v$.

Note that $u\hookrightarrow x^a\sim u\times y\hookrightarrow x\times y^{(a,b)}$ for any $y\in \mathcal{C}$ and $b\in My$ therefore any two elements of $LM$ have representatives in a common $Sub(x)^a$. The lattice operations are defined through these representatives.

Given  $(F,\eta _F)$ as above, $L\eta _F :\faktor{\bigsqcup _{a\in Mx}Sub_{\mathcal{C}}(x)}{\sim} \to \faktor{\bigsqcup _{b\in Ny}Sub_{\mathcal{D}}(y)}{\sim}$  takes the equivalence class of $u\hookrightarrow x^a$ to that of $F(u\hookrightarrow x)^{\eta _{F,x} a}$.
\end{proposition}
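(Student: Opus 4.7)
The plan is to exploit the fact that $(\int M)^{op}$ is filtered, which lets the colimit defining $LM$ be computed at the level of sets by the forgetful functor $\mathbf{DLat}\to \mathbf{Set}$. First I would verify that $\int M$ has finite limits: the product $(x,a)\times (y,b)=(x\times y,(a,b))$ exists because $M$ preserves finite products, and given parallel maps $f,g\colon (x,a)\rightrightarrows (y,b)$ the equalizer is $(\mathrm{eq}(f,g),\tilde{a})$, where $\tilde{a}$ is the lift of $a$ to $M(\mathrm{eq}(f,g))$ afforded by $M$ preserving equalizers. Hence $(\int M)^{op}$ is filtered, and since $\mathbf{DLat}$ is a finitary variety the forgetful functor preserves filtered colimits. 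Therefore $LM$ is, as a set, $\bigsqcup_{a\in Mx}Sub_{\mathcal{C}}(x)$ modulo the relation $(u,x,a)\approx (v,y,b)$ iff there exist $(z,c)\in \int M$ and arrows $g\colon z\to x$, $h\colon z\to y$ satisfying $Mg(c)=a$, $Mh(c)=b$ and $g^*u=h^*v$ in $Sub_{\mathcal{C}}(z)$.

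Next I would show $\approx\,=\,\sim$. For $\approx\Rightarrow \sim$, factor $\langle g,h\rangle\colon z\to x\times y$ through its image $z\twoheadrightarrow \varphi\xhookrightarrow{\iota} x\times y$. Preservation of image factorizations by $M$ gives $(a,b)\in M\varphi$, and pulling $\iota^*(u\times y)$ and $\iota^*(x\times v)$ further back along the cover $z\twoheadrightarrow \varphi$ recovers $g^*u$ and $h^*v$; since effective epimorphisms are conservative on subobjects, the equality $g^*u=h^*v$ descends to $\varphi\cap(u\times y)=\varphi\cap(x\times v)$. For $\sim\Rightarrow \approx$, set $z=\varphi$ with $c\in M\varphi$ a lift of $(a,b)$, and take $g=\pi_1\circ \iota$, $h=\pi_2\circ \iota$; then $g^*u=\varphi\cap(u\times y)=\varphi\cap(x\times v)=h^*v$ in $Sub_{\mathcal{C}}(\varphi)$.

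The remark about common representatives then follows from the morphism $\pi_1\colon (x\times y,(a,b))\to (x,a)$ in $\int M$, which gives $(u,x,a)\approx (u\times y,x\times y,(a,b))$. The lattice structure descends from each $Sub_{\mathcal{C}}(x)$ through the cocone maps, which are $\mathbf{DLat}$-homomorphisms; well-definedness of $\vee$ and $\wedge$ on classes is exactly the statement that pullback along a product projection is a $\mathbf{DLat}$-morphism. Finally the formula for $L\eta_F$ is immediate from Definition \ref{ldef}: the defining cocone component at $(x,a)$ is $Sub_{\mathcal{C}}(x)^a\xrightarrow{F}Sub_{\mathcal{D}}(Fx)^{\eta_{F,x}(a)}\to LN$, sending $u$ to the class of $Fu$.

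The main obstacle is establishing $\approx\Rightarrow \sim$; the image-factorization trick makes this clean, whereas verifying transitivity of $\sim$ head-on would force a Frobenius-style manipulation of $(\varphi\times z)\cap (x\times \psi)\hookrightarrow x\times y\times z$ projected onto $x\times z$, which the filtered-colimit framework bypasses entirely.
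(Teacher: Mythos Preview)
Your proposal is correct and follows essentially the same route as the paper's proof: both use that $(\int M)^{op}$ is filtered (since $M$ preserves finite limits) to obtain the standard zig-zag description of the colimit, then pass to the stated form of $\sim$ by taking the effective epi--mono factorization of $\langle g,h\rangle\colon z\to x\times y$ and using that pullback along an effective epi is injective on subobjects. The paper's version is terser (it omits the explicit verification that $\int M$ has finite limits and the $\sim\Rightarrow\approx$ direction), but the key idea---the image-factorization trick---is identical.
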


\begin{proof}
As $M$ preserves finite limits $(\int M)^{op}$ is filtered, hence $u\hookrightarrow x^a \sim v\hookrightarrow y^b$ iff there is $w\hookrightarrow z$, $c\in Mz$ and arrows $f:z\to x$, $g:z\to y$ with $Mf(c)=a$ and $Mg(c)=b$, such that $w=f^*u=g^*v$. Taking the effective epi-mono factorization of $(f,g):z\to x\times y$ and using that if $h$ is effective epi then $h^*$ is injective, completes the argument. 
\end{proof}

\begin{remark}
We can repeat the construction in the language of model theory. Let $T\subseteq \Sigma _{\omega \omega }^g $ be a coherent theory and $M$ be a model of $T$.  $LM$ is the following distributive lattice: its underlying set is $\faktor{\{\langle \psi (x_1,\dots x_k), (a_i\in M_{X_i}) \rangle \}}{\sim }$ where $\psi $ is a positive existential $\Sigma $-formula, $\vec{a}$ is a tuple of elements having the appropriate sorts, and $\langle \psi (x_1,\dots x_k), (a_i\in M_{X_i}) \rangle \sim \langle \chi (y_1,\dots y_l), (b_i\in M_{Y_i}) \rangle $ iff there's a positive existential $\varphi (x_1,\dots x_k,y_1,\dots y_l)$ such that $T\models \varphi \wedge \psi \Leftrightarrow \varphi \wedge \chi $ and $M\models \varphi (a_1,\dots a_k,b_1,\dots b_l)$. Here $\vec{x}$ and $\vec{y}$ are disjoint (so if $y_i=x_j$ then we replace it with a fresh variable $y_i'$). $[\langle \psi (x_1,\dots x_k), (a_i\in M_{X_i}) \rangle ]\wedge [\langle \chi (y_1,\dots y_l), (b_i\in M_{Y_i}) \rangle ]=[\langle \psi \wedge \chi (x_1,\dots x_k,y_1,\dots y_l), (a_1,\dots a_k,b_1,\dots b_l) \rangle ]$ and similarly for $\vee $. ($\vec{x}$ and $\vec{y}$ are still disjoint, the top element is $[\top ]$, the bottom element is $[\bot ]$. $[\langle \psi (\vec{x}),\vec{a} \rangle ]\sim [\top ]$ iff $M\models \psi (\vec{a})$.)
\end{remark}

\begin{example}
\label{noultrapr}
Given a distributive lattice $K$, every coherent functor $K\to \mathbf{Set}$ factors as $K\xrightarrow{P} \mathbf{2}\to \mathbf{Set}$, hence yields a prime filter $p=P^{-1}(1)\in Spec(K)$. Then $LP=\faktor{K}{p}=\faktor{K}{\sim _{p}}$ where $a\sim _{p}b$ iff $\exists x\in p: x\cap a =x\cap b$.

In particular $L_K$ does not preserve ultraproducts if $K$ has an infinite quotient $\faktor{K}{p}$: Given $(p_i\in Spec(K))_{i\in I}$ the prime filter corresponding to the ultraproduct structure (evaluation) is $p'=\faktor{\prod_i p_i}{U} $ given by $a\in p'$ iff $\{i:a\in p_i \}\in U$. Clearly $\faktor{\prod _i K/p_i }{U}\cong \faktor{K}{p'}$ does not hold in general, e.g.~$\faktor{(K/p)^I}{U}\not \cong \faktor{K}{p}$.
\end{example}

\begin{remark}
$LM\neq *$ for any $M:\mathcal{C}\to \mathbf{Set}$ coherent functor. 
\end{remark}

\begin{proposition}
$M:\mathcal{C}\to \mathbf{Set}$ is positively closed iff $LM=\mathbf{2}$.
\label{poscl2}
\end{proposition}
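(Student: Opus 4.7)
The plan is to unwind the explicit description of $LM$ from Proposition \ref{lmcomp} to characterize which elements of $LM$ equal the top and the bottom, and then observe that $LM=\mathbf{2}$ is literally the positive closedness condition.

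First I would identify the top element. Taking the representative $[\top]=[1\hookrightarrow 1^*]$ (where $*$ is the unique element of $M1$), the $\sim$-relation applied to a pair $(u\hookrightarrow x^a, 1\hookrightarrow 1^*)$ requires $\varphi \hookrightarrow x\times 1$ with $(a,*)\in M\varphi$ and $\varphi \cap (u\times 1)=\varphi \cap (x\times 1)$. Under the identification $x\times 1\cong x$ this simplifies to: there exists $\varphi \hookrightarrow x$ with $a\in M\varphi$ and $\varphi \leq u$. Since $\varphi =u$ always works in one direction and $M\varphi \leq Mu$ in the other, I conclude
\[
[u\hookrightarrow x^a]=[\top] \iff a\in Mu.
\]
Symmetrically, taking $[\bot]=[\emptyset \hookrightarrow 1^*]$, the same analysis (now with $x\times \emptyset =\emptyset$) yields
\[
[u\hookrightarrow x^a]=[\bot] \iff \exists \psi \hookrightarrow x : a\in M\psi \text{ and } \psi \cap u=\emptyset .
\]

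Next I would invoke the remark preceding the statement, which ensures $LM\not \simeq *$, hence $[\top]\neq [\bot]$ in $LM$. Therefore $LM=\mathbf{2}$ is equivalent to saying that every class $[u\hookrightarrow x^a]$ is either $[\top]$ or $[\bot]$. Combining with the two characterizations above, this says exactly that for every subobject $u\hookrightarrow x$ and every $a\in Mx$, either $a\in Mu$ or there is some $\psi \hookrightarrow x$ disjoint from $u$ with $a\in M\psi $. Written as a covering, this is $Mx=Mu\cup \bigcup _{\psi \cap u=\emptyset } M\psi $, which is Definition \ref{poscldef} of positive closedness (noting that $\mathbf{Set}$ is $|\mathcal{C}|^+$-geometric, so the arbitrary union exists).

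There is no real obstacle in this argument; the work is entirely in correctly reading off the two distinguished equivalence classes from the formula of Proposition \ref{lmcomp}. The only point demanding a little care is the identification $x\times 1\cong x$ that converts the symmetric pullback-condition in the definition of $\sim$ into an asymmetric inclusion ($\varphi \leq u$ for $[\top]$, $\varphi \cap u=\emptyset $ for $[\bot]$); once that is done, the two directions of the equivalence are immediate rephrasings of each other.
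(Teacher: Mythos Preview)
Your proof is correct and follows essentially the same strategy as the paper: identify when a class equals the top or the bottom of $LM$ and observe that ``every class is $\top$ or $\bot$'' is literally positive closedness. Your choice of $1\hookrightarrow 1^*$ and $\emptyset\hookrightarrow 1^*$ as representatives for $\top$ and $\bot$ is a mild streamlining over the paper's version, which works inside a single $Sub(x)^a$ (using $x\hookrightarrow x^a$ and $\emptyset\hookrightarrow x^a$) and therefore ends up with a witness $\varphi\hookrightarrow x\times x$ that must be projected down; by using the terminal object you get the disjoint $\psi\hookrightarrow x$ directly.
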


\begin{proof}
$\Rightarrow$ We claim that $u\hookrightarrow x^a \sim v\hookrightarrow y^b$ iff $(a\in Mu \leftrightarrow b\in Mv)$. This is enough as $x\hookrightarrow x^a \not \sim \emptyset \hookrightarrow x^a$ proves $|LM|\geq \mathbf{2}$ and since in any triple $u\hookrightarrow x^a$, $v\hookrightarrow y^b$, $w\hookrightarrow z^c$ two members are equivalent we get $|LM|\leq \mathbf{2}$.
 
To prove the 'only if' direction observe $a\in Mu \Rightarrow (a,b)\in M(\varphi \cap u\times y)=M(\varphi \cap x\times v)\Rightarrow b\in Mv$. To prove the converse first assume $a\in Mu$ and $b\in Mv$. Then $\varphi =u\times v$ proves $u\hookrightarrow x^a$ and $v\hookrightarrow y^b$ to be equivalent. If $a\not \in Mu$ and $b\not \in Mv$ then $(a,b)\not \in M(u\times y \cup x\times v)$ and since $M$ was positively closed there is $\varphi \hookrightarrow x\times y$ such that $\varphi \cap (u\times y \cup x\times v)=\emptyset $ and $(a,b)\in M\varphi $.

$\Leftarrow $ Take $u\hookrightarrow x$ with $a\not \in Mu$. By the above argument $u\hookrightarrow x^a$ cannot be equivalent to $x\hookrightarrow x^a$ hence it must be equivalent to $\emptyset \hookrightarrow x^a$ as $LM$ has two elements. That is, we have $\varphi \hookrightarrow x\times x$ with $(a,a)\in M\varphi $ and with $\varphi \cap u\times x=\emptyset $, therefore $\exists _{\pi }\varphi \cap u=\emptyset $ and $a\in M(\exists _{\pi }\varphi )$.
\end{proof}

\begin{corollary}
$M:\mathcal{C}\to \mathbf{Set}$ is positively closed iff $tp_M$ is the unique natural transformation $M\Rightarrow \overline{S_{\mathcal{C}}}$.
\end{corollary}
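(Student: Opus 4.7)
The plan is to chain together three ingredients already in hand. First, Proposition \ref{poscl2} asserts that $M$ is positively closed if and only if $LM\cong \mathbf{2}$. Second, Remark \ref{correspondence} supplies a natural bijection $Spec(LM)\cong Nat(M,\overline{S_{\mathcal{C}}})$, translating uniqueness of a natural transformation $M\Rightarrow \overline{S_{\mathcal{C}}}$ into the condition $|Spec(LM)|=1$. What remains is the bridging lemma: for a non-trivial distributive lattice $L$, $L\cong \mathbf{2}$ if and only if $|Spec(L)|=1$.

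For the bridging step, the forward direction is immediate since $\mathbf{2}$ has unique prime filter $\{1\}$. For the converse I would invoke the standard fact that prime filters separate points in a distributive lattice (a Zorn's lemma argument, already implicit in the Stone duality quoted in the paper): whenever $a\not\leq b$ in $L$, there is a prime filter containing $a$ but not $b$. If $Spec(L)$ has a single point $p$, then any two elements of $p$ collapse to one and any two elements outside $p$ collapse to another, giving $|L|\leq 2$; the preceding remark $LM\neq *$ (i.e.~$0\neq 1$) then forces $|L|=2$, hence $L\cong \mathbf{2}$.

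Combining the three facts, $M$ is positively closed iff $LM\cong \mathbf{2}$ iff $|Spec(LM)|=1$ iff there is a unique natural transformation $M\Rightarrow \overline{S_{\mathcal{C}}}$; since $tp_M$ is always such a transformation, this unique one is necessarily $tp_M$. The only nontrivial step is the bridging lemma, but it is a standard consequence of prime filter separation; the rest is essentially a chase through the preceding propositions and remarks.
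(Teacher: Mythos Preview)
Your proof is correct and is exactly the argument the paper intends: the corollary is stated without proof immediately after Proposition \ref{poscl2}, relying on the bijection $Spec(LM)\cong Nat(M,\overline{S_{\mathcal{C}}})$ of Remark \ref{correspondence} and the remark $LM\neq *$. Your bridging lemma ($L\cong \mathbf{2}$ iff $|Spec(L)|=1$ for $L\neq *$) is the only step left implicit, and it follows either from prime-filter separation as you argue or directly from the Stone duality equivalence quoted in the paper.
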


\begin{proposition}
\label{injsurj}
Let 
\[\begin{tikzcd}
	{\mathcal{C}} \\
	&& {\mathbf{Set}} \\
	{\mathcal{D}}
	\arrow["F"', from=1-1, to=3-1]
	\arrow[""{name=0, anchor=center, inner sep=0}, "M", from=1-1, to=2-3]
	\arrow[""{name=1, anchor=center, inner sep=0}, "N"', from=3-1, to=2-3]
	\arrow["\eta", shorten <=4pt, shorten >=4pt, Rightarrow, from=0, to=1]
\end{tikzcd}\]
be a 1-cell in $\mathbf{Coh}_e/\mathbf{Set}$. If $F$ is conservative, full wrt.~subobjects, and $\eta $ is elementary then $L((F,\eta ))$ is injective. If $F$ is full wrt.~subobjects, finitely covers its codomain (i.e.~for $y\in \mathcal{D}$ there's a finite effective epimorphic family $(\varphi x_i\to x)_{i<n}$), and $\eta $ is pointwise surjective then $L((F,\eta ))$ is surjective (if $N$ is $\kappa $-geometric then it is enough to assume that $F$ $\kappa $-covers its codomain).
\end{proposition}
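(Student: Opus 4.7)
My plan is to unpack $L((F,\eta))$ via the explicit description of Proposition~\ref{lmcomp}: it sends the class of $u\hookrightarrow x^{a}$ (with $a\in Mx$) to the class of $Fu\hookrightarrow Fx^{\eta_{x}a}$, with equivalence on either side given by the $\varphi$-criterion of the proposition (or equivalently the filtered span criterion noted in its proof). Each half of the statement then reduces to a direct chase through these relations, using the listed hypotheses exactly where natural.

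For injectivity, assume $L((F,\eta))[u\hookrightarrow x^{a}]=L((F,\eta))[v\hookrightarrow y^{b}]$ in $LN$, i.e.~there exists $\psi\hookrightarrow Fx\times Fy$ with $(\eta_{x}a,\eta_{y}b)\in N\psi$ and $\psi\cap(Fu\times Fy)=\psi\cap(Fx\times Fv)$. Fullness of $F$ wrt.~subobjects (together with its preservation of binary products) lifts $\psi$ to $\varphi\hookrightarrow x\times y$ with $F\varphi=\psi$. Since $\eta$ is elementary, the naturality square at the monomorphism $\varphi\hookrightarrow x\times y$ is a pullback, so $(\eta_{x}a,\eta_{y}b)=\eta_{x\times y}(a,b)\in NF\varphi$ forces $(a,b)\in M\varphi$. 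For the meet equation, observe that if $\alpha,\beta\hookrightarrow x\times y$ satisfy $F\alpha=F\beta$ then $F(\alpha\cap\beta)=F\alpha$, so the monomorphism $\alpha\cap\beta\hookrightarrow\alpha$ becomes an iso after $F$, hence is already an iso by conservativity; symmetrically $\alpha\cap\beta=\beta$, so $\alpha=\beta$. Applying this to $\varphi\cap(u\times y)$ and $\varphi\cap(x\times v)$ (whose $F$-images agree by hypothesis) yields the required equation in $Sub_{\mathcal{C}}(x\times y)$, so $[u\hookrightarrow x^{a}]=[v\hookrightarrow y^{b}]$ already in $LM$.

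For surjectivity, take $[w\hookrightarrow y^{c}]\in LN$ with $y\in\mathcal{D}$, $c\in Ny$, and choose a finite (resp.~$<\kappa$) effective epimorphic family $(g_{i}:Fx_{i}\to y)_{i<n}$ covering $y$. Since $N$ is coherent (resp.~$\kappa$-geometric), $(Ng_{i})_{i}$ is jointly surjective, so $Ng_{i}(c_{i})=c$ for some $i$ and some $c_{i}\in NFx_{i}$. Fullness of $F$ lifts $g_{i}^{*}w\hookrightarrow Fx_{i}$ to a subobject $u\hookrightarrow x_{i}$ with $Fu=g_{i}^{*}w$, and pointwise surjectivity of $\eta$ supplies $a\in Mx_{i}$ with $\eta_{x_{i}}(a)=c_{i}$. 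Then $L((F,\eta))[u\hookrightarrow x_{i}^{a}]=[Fu\hookrightarrow Fx_{i}^{c_{i}}]$, and the span $Fx_{i}\xleftarrow{1}Fx_{i}\xrightarrow{g_{i}}y$ with witness $c_{i}\in NFx_{i}$ (satisfying $1^{*}(Fu)=Fu=g_{i}^{*}w$) exhibits this class as equivalent to $[w\hookrightarrow y^{c}]$ via the filtered span criterion. The arguments are technically routine; the delicate point is organising the hypotheses so that each is used exactly where it is needed --- fullness to lift both $\psi$ and $g_{i}^{*}w$, elementary-ness to transport membership through $\eta$, conservativity to reflect the meet equation, and surjectivity of $\eta$ combined with the cover to reach $(c,y)$ in the image.
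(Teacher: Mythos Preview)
Your proof is correct and follows essentially the same approach as the paper's: for injectivity you lift the witnessing subobject via fullness, transport membership via elementarity of $\eta$, and reflect the meet equation via conservativity; for surjectivity you use the cover plus (geometricity of) $N$ to land in some $NFx_i$, then fullness and pointwise surjectivity of $\eta$ to find a preimage. Your write-up is somewhat more explicit than the paper's (e.g.\ you spell out why conservativity reflects equality of subobjects and invoke the span criterion for the final identification), but the argument is the same.
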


\begin{proof}
Injectivity: $[(u\xhookrightarrow{i} x)^a]$ is mapped to $[(Fu\xhookrightarrow{Fi} Fx)^{\eta _x a}]$. Assume $(Fu\xhookrightarrow{Fi} Fx)^{\eta _x a}\sim (Fv\xhookrightarrow{Fj} Fy)^{\eta _y b}$ that is, there's $\chi \hookrightarrow Fx\times Fy$ with $\chi \cap (Fx\times Fv)= \chi \cap (Fu\times Fy)$ and with $(\eta _x(a),\eta _y(b))\in N\chi $. As $F$ is full wrt.~subobjects, $\chi =Fw$, as $F$ is conservative $w\cap (x\times v)= w\cap (u\times y)$ and as $\eta $ is elementary $(a,b)\in w$.

Surjectivity: As $F$ is full wrt.~subobjects and $\eta $ is pointwise surjective it is enough to prove that any $(\chi \hookrightarrow z)^d$ is equivalent to some $(\chi '\hookrightarrow Fx)^{d'}$. The effective epimorphic family $(h_i:Fx_i\to z)_i$ is mapped to a covering by $N$, hence for some $i$ there's $d'\in NFx_i$ with $Nh_i(d')=a$. Then $(h_i^*\chi \hookrightarrow Fx_i)^{d'}$ is equivalent to our original element. 
\end{proof}

We make use of Theorem 2.9.~of \cite{hidden}:

\begin{theorem}
A distributive lattice $K$ has Krull-dimension $\leq n$ iff for any $x_1,\dots x_{n+1}$ there are $a_1,\dots a_{n+1}$ such that
\begin{multline*}
    a_1\wedge x_1=0,\quad a_2\wedge x_2\leq a_1\vee x_1,\ \dots \\ a_{n+1}\wedge x_{n+1}\leq a_n\vee x_n ,\quad  a_{n+1}\vee x_{n+1}=1 
\end{multline*}
\end{theorem}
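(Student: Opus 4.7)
The plan is to treat the two directions separately. For $(\Leftarrow)$ (the element-wise condition implies $\dim K \leq n$), the argument is a direct spectral one: suppose a strict chain $p_0 \subsetneq p_1 \subsetneq \cdots \subsetneq p_{n+1}$ of prime filters exists in $Spec(K)$, and choose $x_i \in p_{n+2-i} \setminus p_{n+1-i}$ for $1 \leq i \leq n+1$. Applying the hypothesised condition to this sequence yields $a_1, \dots, a_{n+1}$; by induction on $i$ one shows $a_i \notin p_{n+2-i}$: from $a_1 \wedge x_1 = 0 \notin p_{n+1}$ together with primality and $x_1 \in p_{n+1}$ for the base step, and from $a_i \wedge x_i \leq a_{i-1} \vee x_{i-1} \notin p_{n+2-i}$ combined with $x_i \in p_{n+2-i}$ for the step. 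Hence in particular $a_i \notin p_{n+1-i}$. At $i = n+1$ this gives $a_{n+1}, x_{n+1} \notin p_0$, whence by primality $1 = a_{n+1} \vee x_{n+1} \notin p_0$, contradicting that $p_0$ is a filter.

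For $(\Rightarrow)$ I would proceed by induction on $n$. The base case $n=0$ is immediate: $\dim K \leq 0$ means $Spec(K)$ is discrete, which is equivalent to $K$ being a Boolean algebra, so every $x_1$ admits a complement $a_1$ with $a_1 \wedge x_1 = 0$ and $a_1 \vee x_1 = 1$. For the inductive step I would introduce the boundary ideal $\partial(x) := \{y \in K : \exists a \in K,\ a \wedge x = 0 \text{ and } y \leq x \vee a\}$ and use the standard Stone-dual identification $Spec(K/\partial(x)) \cong \overline{[x]} \setminus [x]$ to obtain the key reduction $\dim K \leq n \iff \dim(K/\partial(x)) \leq n-1$ for all $x \in K$. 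Applying this at $x = x_{n+1}$ and the inductive hypothesis inside $K/\partial(x_{n+1})$ to $x_1, \dots, x_n$ yields a staircase $a_1, \dots, a_n \in K$ that holds only modulo $\partial(x_{n+1})$; the terminal equality $a_n \vee x_n = 1$ in the quotient unfolds to $a_n \vee x_n \vee x_{n+1} \vee c = 1$ in $K$ for some $c$ with $c \wedge x_{n+1} = 0$. Setting $a_{n+1} := (a_n \vee x_n) \vee c$ then gives $a_{n+1} \vee x_{n+1} = 1$ and, via distributivity together with $c \wedge x_{n+1} = 0$, the bound $a_{n+1} \wedge x_{n+1} = ((a_n \vee x_n) \wedge x_{n+1}) \leq a_n \vee x_n$ as required.

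The main obstacle is precisely this lifting step. While the final two conditions of the staircase fall out cleanly from the terminal equality in $K/\partial(x_{n+1})$, the intermediate inequalities $a_i \wedge x_i \leq a_{i-1} \vee x_{i-1}$ hold only modulo $\partial(x_{n+1})$, i.e.\ only up to additions of the form $x_{n+1} \vee c_i$ with $c_i \wedge x_{n+1} = 0$. To replace them by strict inequalities in $K$ one must redefine the $a_i$'s by joining in a single join $C$ of all the witnesses $c_i$; the ideal structure of $\partial(x_{n+1})$ (closure under finite joins) keeps $C \wedge x_{n+1} = 0$, and distributivity turns each modular inequality into an honest one, but the bookkeeping of simultaneously preserving every intermediate condition of the staircase in the exact form $a_i \wedge x_i \leq a_{i-1} \vee x_{i-1}$ is the most delicate part of the argument.
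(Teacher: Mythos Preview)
The paper does not prove this theorem: it is quoted verbatim as Theorem 2.9 of \cite{hidden} (Coquand--Lombardi) and used as a black box for the subsequent remark on Krull dimension of $LM$. There is therefore no paper-side proof to compare against; your proposal is essentially a reconstruction of the original Coquand--Lombardi argument.

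Your $(\Leftarrow)$ direction is correct as written. For $(\Rightarrow)$ your overall strategy---boundary ideals plus induction on $n$---is the right one, and your identification $Spec(K/\partial(x))\cong \overline{[x]}\setminus [x]$ together with the reduction $\dim K\le n\iff \dim K/\partial(x)\le n-1$ for all $x$ are both fine. The obstacle you flag, however, is not merely bookkeeping: with your orientation (peeling off $x_{n+1}$, then lifting a length-$n$ staircase from $K/\partial(x_{n+1})$), the correction terms you join into the $a_i$'s genuinely destroy the bottom condition $a_1'\wedge x_1=0$. Joining a single $C$ into all $a_i$ gives $a_1'\wedge x_1=(x_{n+1}\vee C)\wedge x_1$, which has no reason to vanish.

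The clean fix is to reverse the orientation: peel off $x_1$. Reformulate the staircase as the single statement $1\in(\cdots((0)^{x_1})^{x_2}\cdots)^{x_{n+1}}$, where for an ideal $I$ one sets $I^{x}:=\langle x\rangle \vee \{y:y\wedge x\in I\}$; unfolding this iterated boundary is \emph{exactly} the chain $a_1\wedge x_1=0$, $a_{i}\wedge x_i\le a_{i-1}\vee x_{i-1}$, $a_{n+1}\vee x_{n+1}=1$. The one-line lemma that $I\mapsto I^{x}$ commutes with the quotient map $K\to K/J$ whenever $J\subseteq I$ then gives
\[
1\in(\cdots((0)^{x_1})^{x_2}\cdots)^{x_{n+1}}\ \text{in }K
\quad\Longleftrightarrow\quad
1\in(\cdots(0)^{\bar x_2}\cdots)^{\bar x_{n+1}}\ \text{in }K/\partial(x_1),
\]
and the induction closes with no explicit lifting of the $a_i$'s at all. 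Put differently: the condition $\bar a_2\wedge\bar x_2=0$ in $K/\partial(x_1)$ already \emph{is} the pair $a_1\wedge x_1=0$, $a_2\wedge x_2\le a_1\vee x_1$ in $K$, so the bottom of the staircase comes for free rather than being the step that breaks.
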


\begin{remark}
By Proposition \ref{poscl2} if $\mathcal{C}$ is Boolean then for every $M:\mathcal{C}\to \mathbf{Set}$ coherent functor $LM=\mathbf{2}$ is a Boolean algebra. By the above theorem if $K$ is the filtered colimit of the distributive lattices $K_i$ and each has Krull-dimension $dim(K_i)\leq n$ then $dim(K)\leq n$. Hence if $dim(Sub_{\mathcal{C}}(x))\leq n$ for each $x\in \mathcal{C}$ then $dim(LM)\leq n$.
\end{remark}

\begin{proposition}
$\{1\}\subseteq LM$ is a prime filter. The corresponding natural transformation is $tp_M$.
\end{proposition}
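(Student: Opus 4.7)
The plan is to reduce everything to the concrete description of $LM$ given by Proposition \ref{lmcomp}. The key lemma I would establish at the outset is the equivalence
\[
[u\hookrightarrow x^a] = 1 \text{ in } LM \iff a \in Mu.
\]
Indeed, "$[u^a]=1$" means $u^a \sim x^a$, which by Proposition \ref{lmcomp} asks for some $\varphi\hookrightarrow x\times x$ with $(a,a)\in M\varphi$ and $\varphi\cap(u\times x) = \varphi\cap(x\times x) = \varphi$, i.e.~$\varphi\leq u\times x$. One direction is immediate by projecting to the first coordinate; the other is witnessed by $\varphi = u\times u$ when $a\in Mu$. This single equivalence drives the rest.

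For primeness of $\{1\}$, I would first pass to common representatives in some $Sub_{\mathcal{C}}(x)^a$, so that the join of two classes is $[(u\vee v)^a]$ (as explicitly asserted in Proposition \ref{lmcomp}). The key equivalence then turns "$[u^a]\vee[v^a]=1$" into "$a\in M(u\vee v)$", and since $M$ preserves finite unions this becomes "$a\in Mu$ or $a\in Mv$", i.e.~"$[u^a]=1$ or $[v^a]=1$". For properness (i.e.~$0\neq 1$ in $LM$), suppose $[x^a]=[\emptyset^a]$; Proposition \ref{lmcomp} demands $\varphi$ with $(a,a)\in M\varphi$ and $\varphi\cap(x\times x)=\varphi\cap(x\times\emptyset)=\emptyset$, forcing $\varphi=\emptyset$ and contradicting $M\emptyset=\emptyset$. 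Taking $a$ to be the unique element of $M1$ shows $0\neq 1$ unconditionally.

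To identify the corresponding natural transformation, I would just unfold Remark \ref{correspondence}. The classifying map $P:LM\to\mathbf{2}$ associated with $p=\{1\}$ sends $y$ to $1$ iff $y=1$, so by definition
\[
\widehat{p}_x(a) = \{u\in Sub_{\mathcal{C}}(x) : [u^a]=1 \text{ in } LM\}.
\]
By the key equivalence this set equals $\{u : a\in Mu\} = \{u : Mu\in\widehat{a}\} = M^{-1}(\widehat{a}) = S_{M,x}(\widehat{a})$, which is exactly $tp_{M,x}(a)$. Naturality is then automatic since both sides of the equation $\widehat{p}=tp_M$ are already known to be natural transformations.

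I do not expect a substantive obstacle: Proposition \ref{lmcomp} has done the heavy lifting and what remains is a short chain of equivalences. The only point requiring mild care is verifying that the "common representative" procedure genuinely computes the lattice join in $LM$, but this is built into the description of Proposition \ref{lmcomp}.
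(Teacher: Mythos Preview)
Your proof is correct and follows essentially the same approach as the paper: both reduce to the equivalence $[u\hookrightarrow x^a]=1\Leftrightarrow a\in Mu$, use coherence of $M$ on $u\vee v$ for primeness, and unwind Remark \ref{correspondence} for the identification with $tp_M$. The only difference is that you isolate and prove this equivalence explicitly at the start (and also spell out properness, which the paper handles via the preceding remark $LM\neq *$), whereas the paper uses it tacitly.
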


\begin{proof}
We need that $u \vee v \sim x\hookrightarrow x^a$ implies $u\sim x$ or $v\sim x$. Indeed, $u\vee v\sim x$ implies $a\in M(u)\vee M(v)$ from which $a\in M(u)$ or $a\in M(v)$.

By Remark \ref{correspondence} the $x$-component of the corresponding natural transformation takes $a\in Mx$ to $\{u\hookrightarrow x : u\hookrightarrow x^a \sim x\hookrightarrow x^a \}=\{u\hookrightarrow x : Mu\ni a \}$.
\end{proof}

\begin{corollary}
$dim(LM)=0$ $\Rightarrow $ $LM=\mathbf{2}$. 
\end{corollary}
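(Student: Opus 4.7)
The plan is to combine the characterization of Krull dimension $\leq 0$ cited from \cite{hidden} with the fact that $\{1\} \subseteq LM$ is a prime filter. Instantiating the cited theorem at $n = 0$ gives: $\dim(LM) = 0$ iff for every $x \in LM$ there exists $a \in LM$ with $a \wedge x = 0$ and $a \vee x = 1$. In other words, every element of $LM$ admits a complement, so $LM$ is a Boolean algebra (the lattice is already distributive by construction).

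Next I would invoke the previous proposition, which asserts that $\{1\} \subseteq LM$ is a prime filter. Given any $x \in LM$, pick its complement $a$. Since $a \vee x = 1 \in \{1\}$, primality forces either $a = 1$ or $x = 1$; in the first case $x = x \wedge 1 = x \wedge a = 0$. Therefore every element of $LM$ is either $0$ or $1$.

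Finally I would appeal to the remark preceding Proposition \ref{poscl2}, which records that $LM \neq *$ for any coherent functor $M$, so $0 \neq 1$ in $LM$. Combined with the previous paragraph, this yields $LM = \{0,1\} = \mathbf{2}$, as required. There is no real obstacle here: the entire argument is a straightforward concatenation of the cited theorem, the prime filter property, and the already-established non-triviality of $LM$.
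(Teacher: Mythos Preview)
Your proof is correct and matches the paper's intended argument. The paper leaves the corollary unproved, but its placement immediately after the proposition that $\{1\}\subseteq LM$ is prime, and in the wake of the cited characterization of Krull dimension from \cite{hidden}, makes clear that exactly your chain of reasoning (dimension $0$ $\Rightarrow$ Boolean $\Rightarrow$ the prime filter $\{1\}$ forces every element to be $0$ or $1$, and $LM\neq *$ rules out collapse) is what is meant.
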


\begin{corollary}
\label{4}
Given a coherent functor $M:\mathcal{C}\to \mathbf{Set}$ and a natural transformation $\tau :M\Rightarrow \overline{S_{\mathcal{C}}} $ we have $tp_{M,x}(a)\subseteq \tau _x(a)$ for all $x\in \mathcal{C}$ and $a\in Mx$. (By Remark \ref{correspondence}.)
\end{corollary}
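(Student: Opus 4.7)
The plan is to invoke the correspondence from Remark \ref{correspondence} between prime filters of $LM$ and natural transformations $M \Rightarrow \overline{S_{\mathcal{C}}}$, and then observe that the prime filter associated to $tp_M$ is the minimum prime filter.

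First, by the proposition immediately preceding the corollary, the prime filter $\{1\} \subseteq LM$ has corresponding natural transformation equal to $tp_M$. Next, let $\tau \in \mathrm{Nat}(M,\overline{S_{\mathcal{C}}})$ be arbitrary and let $p_\tau \in \mathrm{Spec}(LM)$ be the prime filter assigned to $\tau$ under the bijection $\mathrm{Spec}(LM) \cong \mathrm{Nat}(M,\overline{S_{\mathcal{C}}})$ of Remark \ref{correspondence}. Every prime filter of a distributive lattice contains the top element, so $\{1\} \subseteq p_\tau$.

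Finally, the last sentence of Remark \ref{correspondence} records that the assignment $p \mapsto \widehat{p}$ is monotone with respect to componentwise inclusion: if $p \subseteq q$ in $\mathrm{Spec}(LM)$, then $\widehat{p}_x(a) \subseteq \widehat{q}_x(a)$ for every $x \in \mathcal{C}$ and $a \in Mx$. Applying this to $\{1\} \subseteq p_\tau$ gives $tp_{M,x}(a) = \widehat{\{1\}}_x(a) \subseteq \widehat{p_\tau}_x(a) = \tau_x(a)$, as required.

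There is no real obstacle here; the content is entirely packaged into Remark \ref{correspondence} and the preceding proposition. The only point worth emphasising is that $\{1\}$ is indeed a prime filter (verified in the preceding proposition) and is the smallest one, which makes $tp_M$ the pointwise-minimum natural transformation $M \Rightarrow \overline{S_{\mathcal{C}}}$.
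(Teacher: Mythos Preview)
Your proof is correct and is exactly the argument the paper intends: the corollary is stated with only the parenthetical ``(By Remark \ref{correspondence})'' as justification, and you have correctly unpacked this by combining the preceding proposition (that $\{1\}$ is prime and corresponds to $tp_M$) with the monotonicity observation at the end of Remark \ref{correspondence}.
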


\begin{theorem}
\label{posclequivalent}
Let $M:\mathcal{C}\to \mathbf{Set}$ be a coherent functor. The following are equivalent:
\begin{enumerate}
    \item $M$ is positively closed, i.e.~for all $u\hookrightarrow x\in \mathcal{C}$ and $a\in Mx\setminus Mu$ there is $v\hookrightarrow x$ such that $u\cap v=\emptyset $ and $a\in Mv$.
    \item $LM=\mathbf{2}$,
    \item $LM$ is a Boolean algebra,
    \item For all $x\in \mathcal{C}$ and $a\in Mx$ the prime filter $tp_{M,x}(a)$ is maximal,
    \item For any coherent functor $N:\mathcal{C}\to \mathbf{Set}$ each $M\Rightarrow N$ natural transformation is elementary.
\end{enumerate}
\end{theorem}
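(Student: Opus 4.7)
The already-established results $1 \Leftrightarrow 2$ (Proposition \ref{poscl2}) and $1 \Rightarrow 5$ (Proposition \ref{allel}) reduce the task to showing $1 \Leftrightarrow 4$, $2 \Leftrightarrow 3$, and $5 \Rightarrow 1$. The implication $2 \Rightarrow 3$ is immediate, and $1 \Leftrightarrow 4$ is a direct unfolding of definitions: positive closedness of $M$ at $(u \hookrightarrow x, a \in Mx)$ asserts that $a \in Mu$ or else $a \in M\psi$ for some $\psi$ with $\psi \cap u = \emptyset$, which is precisely the statement that the prime filter $tp_{M,x}(a) = \{v \hookrightarrow x : a \in Mv\}$ is a maximal proper filter in $Sub_{\mathcal{C}}(x)$.

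For $3 \Rightarrow 2$ I would invoke the preceding proposition, which identifies $\{1\} \subseteq LM$ as a prime filter. In a Boolean algebra every prime filter is an ultrafilter, so for each $a \in LM$ either $a = 1$ or $\neg a = 1$; the latter forces $a = 0$, whence $LM = \mathbf{2}$.

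For $5 \Rightarrow 1$ I would argue contrapositively via compactness. Suppose $M$ is not positively closed and fix $u \hookrightarrow x$ together with $a \in Mx \setminus Mu$ such that $a \notin M\psi$ for every $\psi \hookrightarrow x$ with $\psi \cap u = \emptyset$. In the extended canonical signature of $\mathcal{C}$, adjoin a constant $c_b$ for every $y \in \mathcal{C}$ and every $b \in My$, and consider the coherent theory
\[
T' \;=\; Th(\mathcal{C}) \;\cup\; \mathrm{Diag}^+(M) \;\cup\; \{\top \Rightarrow u(c_a)\},
\]
where $\mathrm{Diag}^+(M)$ collects all positive sequents satisfied in $M$ under $c_b \mapsto b$. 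If some finite $F \subseteq \mathrm{Diag}^+(M)$, mentioning constants $c_a = c_{a_1}, c_{a_2}, \dots, c_{a_N}$, admitted no $\mathbf{Set}$-model together with $u(c_a)$, then $\bigwedge F(\vec y) \cap u(y_1) = \emptyset$ in $\mathcal{C}$; but then $\psi(y_1) := \exists y_2, \dots, y_N\, \bigwedge F(\vec y)$ would satisfy $\psi \cap u = \emptyset$ (since $\exists_\pi$ is left adjoint to $\pi^{*}$) and $a \in M\psi$ (since $M \models \bigwedge F(a, a_2, \dots, a_N)$ by the positive diagram), contradicting the choice of $(u, a)$. Theorem \ref{compactnesslos} therefore produces a coherent model $N : \mathcal{C} \to \mathbf{Set}$ of $T'$; the assignments $\alpha_y(b) := c_b^N$ extend, via Proposition \ref{filtration} and the positive-diagram axioms on the filtration monomorphisms, to a natural transformation $\alpha : M \Rightarrow N$ with $\alpha_x(a) = c_a^N \in Nu$ yet $a \notin Mu$, so $\alpha$ is not elementary.

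The main obstacle is this last direction: one must convert the failure of positive closedness at $(u, a)$ into finite satisfiability of $T'$, and the key syntactic move is to replace a finite conjunction of positive diagram facts by its existential closure $\psi$, which turns any would-be inconsistency into an admissible $\psi$ disjoint from $u$ with $a \in M\psi$ --- exactly what was ruled out by assumption. The remaining bookkeeping (verifying that the chosen constants assemble into a natural transformation out of $M$) is standard and uses the filtration machinery already developed.
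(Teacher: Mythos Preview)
Your proof is correct and follows essentially the same strategy as the paper: the nontrivial implication is closed by a compactness argument that builds, from the extended canonical theory plus the positive diagram of $M$ plus one extra demand at $c_a$, a model $N$ and a homomorphism $\alpha:M\Rightarrow N$ which fails to be elementary. The only cosmetic differences are that the paper proves $5\Rightarrow 4$ by first extending $tp_{M,x}(a)$ to a maximal filter $q$ and realizing all of $q$, whereas you go straight to $5\Rightarrow 1$ by asking only for $u(c_a)$; and the paper routes $4\Rightarrow 2$ through Corollary~\ref{4} and $3\Rightarrow 2$ through the Krull-dimension remark, while you unfold $1\Leftrightarrow 4$ directly and deduce $3\Rightarrow 2$ from the primeness of $\{1\}$ --- both shortcuts are sound and arguably cleaner.
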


\begin{proof}
$1\Leftrightarrow 2\Leftrightarrow 3 \Rightarrow 5 $ was already proved. $4\Rightarrow 2$ follows by Corollary \ref{4}. We are left to show $5\Rightarrow 4$. Basically we repeat the proof of Theorem 8.2.4.~of \cite{hodges}. 

Let $M:\mathcal{C}\to \mathbf{Set}$ be a model and $p=\{\varphi \hookrightarrow x : a\in M\varphi  \}$ be the $x$-type realized by $a\in Mx$. Let $q\supseteq p$ be a maximal filter. By Proposition \ref{converse} it is enough to prove that there is a model $N:\mathcal{C}\to \mathbf{Set}$ and a natural transformation $\alpha :M\Rightarrow N$ such that $q=\{\psi \hookrightarrow x :\alpha _x(a)\in N\psi \} $.

Let $L_{\mathcal{C}}$ be the extended canonical language of $\mathcal{C}$ and add constant symbols $(c_b)_{b\in My}$ for each $y\in \mathcal{C}$. Let $L$ be the resulting signature. $T\subseteq L_{\omega \omega }^g$ is defined to be $Th(\mathcal{C})+Diag(M)+(R_{\psi }(c_a))_{\psi  \in q}$ where $Diag(M)$ is the diagram of $M$, i.e.~it is the set of all closed atomic formulas $\varphi (c_{a_1},\dots c_{a_n})$ for which $M\models \varphi (a_1, \dots a_n)$. It is enough to prove that $T$ is consistent: a model of $T$ yields a coherent functor $N:\mathcal{C}\to \mathbf{Set}$ together with specified elements $(c_b^N\in Ny)_{y\in \mathcal{C}, b\in My}$ such that $(\alpha _y:b\mapsto c_b^N)_{y\in \mathcal{C}}$ is a natural transformation with the required property.

Assume $T$ to be inconsistent. Then (using compactness and that $q$ is closed under finite meets) there are finitely many sentences in $Diag(M)$ and there is $x \hookleftarrow \psi \in q$ such that
\[
Th(\mathcal{C})\vdash \varphi _1(c_{a_{11}},\dots c_{a_{1j_1}})\wedge \dots \wedge \varphi _k(c_{a_{k1}},\dots c_{a_{kj_k}})\wedge R_{\psi }(c_a) \Rightarrow \bot
\]
Since the new constant symbols do not appear in $Th(\mathcal{C})$, we can replace them by free variables $r_1,\dots r_n, s$ and get
\[
Th(\mathcal{C})\vdash \exists r_1,\dots r_n [\varphi _1(r_1,\dots r_n,s)\wedge \dots \wedge \varphi _k(r_1,\dots r_n,s)]\wedge R_{\psi }(s) \Rightarrow \bot
\]

The left conjunct corresponds to a subobject $\chi \hookrightarrow x$ which is in $p$ (as $M$ makes this formula valid when we interpret $s$ to be $a$), and since the right conjunct corresponds to $\psi$, we get $q\ni \chi \cap \psi =\emptyset $ which is a contradiction.
\end{proof}

\begin{theorem}
\label{posclexists}
For any coherent category $\mathcal{C}\not \simeq *$ there is a positively closed model $N:\mathcal{C}\to \mathbf{Set}$.
\end{theorem}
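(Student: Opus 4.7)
The plan is to build $N$ as the filtered colimit of a transfinite chain of coherent functors $\mathcal{C}\to\mathbf{Set}$, at each successor step strictly enlarging the type of some element via a homomorphism, and then invoking Theorem~\ref{posclequivalent}(4) once every element has maximal type. A starting model $M_0\colon\mathcal{C}\to\mathbf{Set}$ is supplied by Deligne's completeness theorem for coherent logic: $\mathcal{C}\not\simeq *$ gives $0\neq 1$ in $Sub_{\mathcal{C}}(1)$, so $Th(\mathcal{C})$ is consistent as a coherent theory and must have a $\mathbf{Set}$-model.

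At a successor stage, suppose $M_\alpha$ is not already positively closed. By Theorem~\ref{posclequivalent}(4) some element $a\in M_\alpha x$ has non-maximal type, so there exists $v\in Sub_{\mathcal{C}}(x)\setminus tp_{M_\alpha}(a)$ with $v\wedge u\neq 0$ for every $u\in tp_{M_\alpha}(a)$. I would then construct $M_{\alpha+1}$ and $\beta_\alpha\colon M_\alpha\Rightarrow M_{\alpha+1}$ with $\beta_\alpha(a)\in M_{\alpha+1}(v)$, by taking a $\mathbf{Set}$-model of the theory $Th(\mathcal{C}) + Diag(M_\alpha) + R_v(c_a)$ in the extended canonical signature of $\mathcal{C}$ augmented with constants for the elements of $M_\alpha$. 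Consistency is the same Henkin/compactness argument as in $5\Rightarrow 4$ of Theorem~\ref{posclequivalent}: any inconsistency yields (after replacing constants by variables and projecting out the auxiliary ones) a subobject $\chi\in tp_{M_\alpha}(a)$ with $\chi\wedge v=0$, contradicting the choice of $v$.

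At limit stages I would set $M_\lambda=\mathrm{colim}_{\alpha<\lambda}M_\alpha$ in $\mathbf{Set}^{\mathcal{C}}$, which remains coherent because $\mathbf{Coh}(\mathcal{C},\mathbf{Set})$ is closed under filtered colimits (Theorem~\ref{los} with $\mathcal{D}=\mathbf{Set}$, or directly from the commutation of filtered colimits with finite limits in $\mathbf{Set}$). Running the construction along a diagonal enumeration of all pairs $(a,v)$ arising throughout the chain, and using that homomorphisms only enlarge types, a cardinal-arithmetic argument (each $tp_{M_\alpha}(b)$ can be strictly enlarged at most $|Sub_{\mathcal{C}}(x_b)|\leq 2^{|\mathcal{C}|}$ times, while each stage introduces boundedly many new elements) shows the chain stabilises at some ordinal $\kappa$. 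At that stage every element of $M_\kappa$ realises a maximal type, so $N:=M_\kappa$ is positively closed by Theorem~\ref{posclequivalent}(4).

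The main obstacle is the transfinite bookkeeping: ensuring that every pair $(a,v)$, including those whose element $a$ first appears only at a later stage, is eventually addressed, and bounding the length $\kappa$ of the chain. The monotonicity of types under homomorphisms (once $v$ has entered the type of $\beta_\alpha(a)$, it stays in the type downstream) together with a standard diagonal enumeration of pairs handles this. The consistency computation at successor stages is a direct specialisation of the argument already carried out in the proof of Theorem~\ref{posclequivalent}.
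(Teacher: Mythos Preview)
Your proposal is correct and follows essentially the same route as the paper: build $N$ as the filtered colimit of a transfinite chain of models obtained by iteratively enlarging types via homomorphisms, with the successor-stage existence argument being exactly the compactness computation from $5\Rightarrow 4$ of Theorem~\ref{posclequivalent}. The paper organizes the bookkeeping as an outer $\omega$-iteration of inner passes over all defects $(u,a)$ of the current model (thereby sidestepping the diagonal enumeration and the ``stabilises'' claim) and verifies positive closedness via criterion~(5) rather than~(4), but these are cosmetic differences.
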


\begin{proof}
Let $M:\mathcal{C}\to \mathbf{Set}$ be any model. Take all pairs $\varphi _i= (u_i\hookrightarrow x_i, a_i\in Mx_i\setminus Mu_i )_{i<\kappa }$. We will define a $\kappa $-chain of models and homomorphisms, whose colimit exists by \cite{rosicky}. $M_0=M$, in limit steps we take colimits. Given $M_i$, if there is an $N$ with a natural transformation $M_i\xRightarrow{\alpha } N$ such that $\alpha _{x_i}(a_i)\in Nu_i$ then we choose one, otherwise take $M_i\xRightarrow{id} M_i=M_{i+1}$. $M_{\kappa }=M'$. Now we do the same with $M'$ to get $M''$, etc. $N$ can be taken to be the colimit of $M\Rightarrow M'\Rightarrow M''\Rightarrow \dots $.

Given $N\xRightarrow{\beta } N'$, if it is not elementary, then for some $u\hookrightarrow x$ in $\mathcal{C}$, there is an element $a\in \beta _x^{-1}(N'u)\setminus Nu$. This is contained in some $M{''''}^{ \dots }{'}x$, and therefore also in $M{''''} ^{\dots }{''}u \subseteq Nu$ by construction.
\end{proof}

\begin{theorem}
\label{intsemancplt}
A coherent category $\mathcal{C}$ is semantically complete iff it is weakly Boolean and 2-valued (i.e.~$|Sub_{\mathcal{C}}(1)|=2$).
\end{theorem}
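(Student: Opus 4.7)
The plan is to move between $\mathcal{C}$-internal subobject inequalities and their semantic counterparts via Deligne's completeness theorem for coherent logic, which asserts that $a\leq b$ in $\mathcal{C}$ iff every $\mathbf{Set}$-valued coherent functor $M:\mathcal{C}\to\mathbf{Set}$ satisfies $Ma\subseteq Mb$. Under semantic completeness the quantifier ``every'' can be replaced by ``some'', which is the bridge between the syntactic hypotheses (weakly Boolean, 2-valued) and the semantic conclusion.

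For the easy direction $(\Leftarrow)$, suppose $\mathcal{C}$ is weakly Boolean and 2-valued. I would show that $Ma\subseteq Mb\iff a\leq b$ for every model $M$; since the right hand side no longer depends on $M$, any two models then automatically agree. The nontrivial implication is: if $a\not\leq b$, the weakly Boolean hypothesis supplies $0\neq u\leq a$ with $u\cap b=0$. Strictness of $0$ in a coherent category (noted in the proof of Theorem~\ref{los}) combined with 2-valuedness forces $\exists_!u=1$, so the effective epi $Mu\twoheadrightarrow M(\exists_!u)=1$ makes $Mu$ nonempty, while $Mu\cap Mb=M(u\cap b)=\emptyset$, witnessing $Ma\not\subseteq Mb$.

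For $(\Rightarrow)$, assume $\mathcal{C}$ is semantically complete. To obtain 2-valuedness, pick $0\neq u\leq 1$: Deligne produces a model $M$ with $Mu\neq\emptyset$, so $Mu=M1=\{\ast\}$ and hence $M1\subseteq Mu$; semantic completeness propagates this to $N1\subseteq Nu$ for every $N$, and Deligne then returns $u=1$. For weakly Booleanness, take $a\not\leq b$. Deligne together with semantic completeness guarantees that \emph{every} model (in particular the positively closed one furnished by Theorem~\ref{posclexists}) satisfies $Ma\not\subseteq Mb$. Choose such an $M$ and $c\in Ma\setminus Mb$; applying positive closedness to $b\hookrightarrow x$ gives $Mx=Mb\cup\bigcup_{\psi\cap b=0}M\psi$, so $c\in M\psi$ for some $\psi$ with $\psi\cap b=0$. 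Then $u:=a\cap\psi$ satisfies $u\leq a$, $u\cap b=\psi\cap b\cap a=0$, and $c\in Mu$, so $u\neq 0$—exactly the weakly Boolean condition.

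The only step that is not formal lattice manipulation is the appeal to Deligne's completeness theorem for converting $a\not\leq b$ into a separating $\mathbf{Set}$-valued model; this is where the genuine semantic input enters, and is the step I would need to flag carefully (it is already implicit in Theorem~\ref{posclexists}, which assumes at least one $\mathbf{Set}$-model to exist).
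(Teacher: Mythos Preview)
Your proof is correct and follows essentially the same route as the paper: both directions hinge on the completeness theorem (a conservative $\mathcal{C}\to\mathbf{Set}^I$) together with Theorem~\ref{posclexists} to obtain a positively closed model witnessing the weakly Boolean condition, and on the observation that 2-valuedness makes every model reflect the initial object. The only cosmetic difference is that you phrase the $\Leftarrow$ direction as ``every model is conservative'' ($Ma\subseteq Mb\iff a\leq b$) rather than directly as the dichotomy, but the underlying argument is identical.
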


\begin{proof}
$\Rightarrow $. By the completeness theorem there is a $\mathcal{C}\to \mathbf{Set}^I$ conservative coherent functor. Given $a\hookrightarrow 1$ and a model $M:\mathcal{C}\to \mathbf{Set}$, either $Ma=\emptyset $ or $Ma=*$, and by our assumption this does not depend on the choice of $M$. So $a=0$ or $a=1$ and $|Sub_{\mathcal{C}}(1)|=2$.

Now pick a pair of subobjects $a\hookrightarrow x \hookleftarrow b$. By Theorem \ref{posclexists} there is a positively closed model $M:\mathcal{C}\to \mathbf{Set}$. By assumption if $a\not \leq b$ then $Ma\not \leq Mb$ and hence there is an element $p\in Ma\setminus Mb$. As $M$ is positively closed there is $v\hookrightarrow x$ such that $v\cap b=\emptyset $ and $p\in Mv$. Then we can choose $u=a\cap v$.

$\Leftarrow $. As $|Sub_{\mathcal{C}}(1)|=2$ any coherent functor $\mathcal{C}\to \mathbf{Set}$ reflects the initial object. Then our condition on the subobject lattices guarantees that either $Ma\leq Mb$ for every model $M$ or $Ma\not \leq Mb$ for every model $M$.
\end{proof}

\begin{remark}
The 2-functoriality of $L$ implies that given any coherent functor $F:\mathcal{C}\to \mathcal{D}$ we have a natural transformation
\[\begin{tikzcd}
	{\mathbf{Coh}(\mathcal{C},\mathbf{Set})} \\
	&&& {\mathbf{DLat}} \\
	{\mathbf{Coh}(\mathcal{D},\mathbf{Set})}
	\arrow["{F^*}", from=3-1, to=1-1]
	\arrow[""{name=0, anchor=center, inner sep=0}, "{L_{\mathcal{C}}}", curve={height=-6pt}, from=1-1, to=2-4]
	\arrow[""{name=1, anchor=center, inner sep=0}, "{L_{\mathcal{D}}}"', curve={height=6pt}, from=3-1, to=2-4]
	\arrow["{\Phi _F}", shorten <=5pt, shorten >=5pt, Rightarrow, from=0, to=1]
\end{tikzcd}\]
with $\Phi _{F,M}=L((F,id)):L_{\mathcal{C}}(MF)\to L_{\mathcal{D}}(M)$. By Proposition \ref{injsurj} when $F$ is a Morita-equivalence $\Phi _F$ is an isomorphism.
\end{remark}

Under certain assumptions on $F$ we also have:

\begin{proposition}
\label{commdiag}
Let $\mathcal{C}$ be a coherent category, $\widetilde{\mathcal{C}}$ be $\kappa $-geometric and let $\varphi :\mathcal{C}\to \widetilde{\mathcal{C}}$ be a coherent functor which is fully faithful, full wrt.~regular subobjects and for each $y\in \widetilde{\mathcal{C}}$ there are maps $(h_i:\varphi (x_i)\to y)_{i<\lambda <\kappa }$ with $y=\bigcup _{i<\lambda} \exists _{h_i}\varphi (x_i)$. Then we have the following natural isomorphisms:
\[\begin{tikzcd}
	{\mathbf{Coh}(\mathcal{C},\mathbf{Set})} && {\mathbf{Set}^{\mathcal{C}}} \\
	& \cong &&& {\mathbf{DLat}} \\
	{\mathbf{Geom_{\kappa}}(\widetilde{\mathcal{C}},\mathbf{Set})} && {\mathbf{Set}^{\widetilde{\mathcal{C}}}}
	\arrow["{\varphi ^*}", from=3-1, to=1-1]
	\arrow[hook, from=1-1, to=1-3]
	\arrow[hook, from=3-1, to=3-3]
	\arrow[""{name=0, anchor=center, inner sep=0}, "{Lan_YSub_{\mathcal{C}}}", from=1-3, to=2-5]
	\arrow[""{name=1, anchor=center, inner sep=0}, "{Lan_YSub_{\widetilde{\mathcal{C}}}}"', from=3-3, to=2-5]
	\arrow["{Lan_{\varphi}}"', from=1-3, to=3-3]
	\arrow["{\Psi }", "\cong "', shorten <=4pt, shorten >=4pt, Rightarrow, from=0, to=1]
\end{tikzcd}\]
where $\Psi _{\mathcal{C}(x,-)}$ is $Sub_{\mathcal{C}}(x)\xrightarrow{ \varphi } Sub_{\widetilde{\mathcal{C}}}(\varphi x)$.

In particular under these assumptions $\varphi ^*:\mathbf{Geom_{\kappa}}(\widetilde{\mathcal{C}},\mathbf{Set}) \to \mathbf{Coh}(\mathcal{C},\mathbf{Set})$ is fully faithful. 
\end{proposition}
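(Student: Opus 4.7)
The plan is to build $\Psi$ at representables and extend by cocontinuity. Both $L_{\mathcal{C}}=Lan_YSub_{\mathcal{C}}$ and the composite $L_{\widetilde{\mathcal{C}}}\circ Lan_{\varphi}$ are left adjoints $\mathbf{Set}^{\mathcal{C}}\to \mathbf{DLat}$ (the first with right adjoint $K\mapsto \mathbf{DLat}(Sub_{\mathcal{C}}(-),K)$, the second a composite of left adjoints), so both preserve colimits. Since every $M\in \mathbf{Set}^{\mathcal{C}}$ is the canonical colimit of the representables above it, it suffices to exhibit $\Psi$ at $\mathcal{C}(x,-)$ and check naturality in $x$; cocontinuity then produces the natural iso on all of $\mathbf{Set}^{\mathcal{C}}$.

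The two sides at $\mathcal{C}(x,-)$ are computed as follows. The category of elements $\int \mathcal{C}(x,-)$ has initial object $(x,1_x)$, so by Proposition \ref{lmcomp} (equivalently, the usual Kan-extension-along-Yoneda computation) $L_{\mathcal{C}}(\mathcal{C}(x,-))=Sub_{\mathcal{C}}(x)$. On the right, the general identity $Lan_{\varphi}\circ Y_{\mathcal{C}}\cong Y_{\widetilde{\mathcal{C}}}\circ \varphi$ (valid for any functor $\varphi$) gives $Lan_{\varphi}(\mathcal{C}(x,-))\cong \widetilde{\mathcal{C}}(\varphi x,-)$, and then $L_{\widetilde{\mathcal{C}}}(\widetilde{\mathcal{C}}(\varphi x,-))=Sub_{\widetilde{\mathcal{C}}}(\varphi x)$. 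The prescribed $\Psi_{\mathcal{C}(x,-)}=\varphi :Sub_{\mathcal{C}}(x)\to Sub_{\widetilde{\mathcal{C}}}(\varphi x)$ is a $\mathbf{DLat}$-homomorphism since $\varphi$ is coherent. Injectivity uses fully-faithfulness (any iso $\varphi u\to \varphi v$ over $\varphi x$ lifts by fullness to $\iota_0:u\to v$, and faithfulness forces $\iota_0$ to commute with the inclusions into $x$, so $u=v$ in $Sub_{\mathcal{C}}(x)$); surjectivity is the ``full wrt.~regular subobjects'' hypothesis. Naturality in $x$ is immediate from functoriality.

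For the ``in particular'' clause, fully-faithfulness of $\varphi^*:\mathbf{Geom_{\kappa}}(\widetilde{\mathcal{C}},\mathbf{Set})\to \mathbf{Coh}(\mathcal{C},\mathbf{Set})$ is equivalent --- by the adjunction $Lan_{\varphi}\dashv \varphi^*$ --- to the counit $\varepsilon_N:Lan_{\varphi}\varphi^*N\Rightarrow N$ being an iso for every $N\in \mathbf{Geom_{\kappa}}(\widetilde{\mathcal{C}},\mathbf{Set})$. Pointwise at $y\in \widetilde{\mathcal{C}}$ the component is the canonical map $colim_{(x,f)\in \varphi/y}N\varphi x\to Ny$ whose $(x,f)$-piece is $Nf$. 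Surjectivity follows directly from the covering $y=\bigcup_{i<\lambda} \exists_{h_i}\varphi x_i$ and $\kappa$-geometricity of $N$ (so the family $(Nh_i)_i$ is jointly surjective on $Ny$). For injectivity --- the main technical obstacle --- given representatives $(x,f,a)$ and $(x',f',a')$ with $Nf(a)=Nf'(a')$, I would form the pullback $z=\varphi x\times_y \varphi x'$ in $\widetilde{\mathcal{C}}$ (so that $(a,a')\in Nz$ since $N$ preserves finite limits), apply the covering hypothesis to $z$ to produce some $g_k:\varphi x_k\to z$ with $(a,a')$ in the image of $Ng_k$, pick such a $c\in N\varphi x_k$, then lift the two composites $\varphi x_k\to \varphi x$ and $\varphi x_k\to \varphi x'$ by fullness of $\varphi$ to maps $q_1:x_k\to x$, $q_2:x_k\to x'$. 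These are morphisms in $\varphi/y$ out of the common object $(x_k,f\varphi q_1)=(x_k,f'\varphi q_2)$, hence form the zigzag identifying $(x,a)$ with $(x',a')$ in the colimit. The tricky bookkeeping is precisely this last step, where the covering of the pullback, the factorisation of $(a,a')$, and the lifts through $\varphi$ have to line up to witness a single equivalence in the filtered diagram.
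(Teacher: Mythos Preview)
Your proof is correct and follows the same overall architecture as the paper: establish $\Psi$ as an iso via its values on representables (where it is $\varphi:Sub_{\mathcal{C}}(x)\to Sub_{\widetilde{\mathcal{C}}}(\varphi x)$), and show $\varphi^*$ is fully faithful by checking that the counit $Lan_{\varphi}\varphi^*N\Rightarrow N$ is an isomorphism for $\kappa$-geometric $N$. Surjectivity of the counit via the covering hypothesis is identical in both.

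The one genuine tactical difference is in the injectivity step. You construct the pullback $\varphi x\times_y\varphi x'$, cover it by objects $\varphi x_k$, and lift through fullness of $\varphi$; this is correct and produces the required span in one shot. The paper instead exploits that the indexing category $(\int N\varphi)^{op}$ is filtered (since $N\varphi$ is lex), reduces to the case where both representatives live over the \emph{same} $(x,a)$, and then observes that the equalizer of the two maps $f,g:\varphi x\to y$ is a regular subobject of $\varphi x$, hence of the form $\varphi h$ by the ``full wrt.\ regular subobjects'' hypothesis. The paper's route thus explains why that hypothesis is phrased for \emph{regular} subobjects: it is exactly what is needed to pull equalizers back into $\mathcal{C}$. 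Your route bypasses this, using only fullness and the covering hypothesis for injectivity, but then your appeal to ``full wrt.\ regular subobjects'' for the surjectivity of $\Psi_{\mathcal{C}(x,-)}$ is a slight overstatement: literally that hypothesis only gives regular subobjects of $\varphi x$, not arbitrary ones. (The paper's proof has the same slippage, writing ``full wrt.\ subobjects'' at that point; in the intended application to pretopos completion all monos are regular, so the distinction evaporates.)
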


\begin{proof}
We have
\[\begin{tikzcd}
	{\mathbf{Coh}(\mathcal{C},\mathbf{Set})} && {\mathbf{Set}^{\mathcal{C}}} \\
	&&&&& {\mathbf{DLat}} \\
	{\mathbf{Geom_{\kappa}}(\widetilde{\mathcal{C}},\mathbf{Set})} && {\mathbf{Set}^{\widetilde{\mathcal{C}}}}
	\arrow[""{name=0, anchor=center, inner sep=0}, "{\varphi ^*}", from=3-1, to=1-1]
	\arrow[hook, from=1-1, to=1-3]
	\arrow[hook, from=3-1, to=3-3]
	\arrow[""{name=1, anchor=center, inner sep=0}, "{Lan_YSub_{\mathcal{C}}}", from=1-3, to=2-6]
	\arrow[""{name=2, anchor=center, inner sep=0}, "{Lan_YSub_{\widetilde{\mathcal{C}}}}"', from=3-3, to=2-6]
	\arrow[""{name=3, anchor=center, inner sep=0}, "{\varphi ^*}", curve={height=-12pt}, from=3-3, to=1-3]
	\arrow[""{name=4, anchor=center, inner sep=0}, "{Lan_{\varphi }}", curve={height=-12pt}, from=1-3, to=3-3]
	\arrow["{=}"{description}, draw=none, from=0, to=3]
	\arrow["\dashv"{anchor=center, rotate=-180}, draw=none, from=4, to=3]
	\arrow["\Psi", shorten <=4pt, shorten >=4pt, Rightarrow, from=1, to=2]
\end{tikzcd}\]

Since $\varphi $ is fully faithful, $Lan_{\varphi }$ is also fully faithful and hence $\varphi ^* \circ Lan_{\varphi }\cong 1_{\mathbf{Set}^{\mathcal{C}}}$. Therefore it is enough to see that the full subcategory $\mathbf{Geom}_{\kappa }(\widetilde{\mathcal{C}},\mathbf{Set})$ lies in the essential image of $Lan_{\varphi }$. That is, given a $\kappa $-geometric functor $M:\widetilde{\mathcal{C}}\to \mathbf{Set}$ we should prove that it is the colimit of representables of the form $\widetilde{\mathcal{C}}(\varphi x,-)$. 

Write $M\varphi =colim _{(\int M\varphi )^{op}} \mathcal{C}(x_i,-)$. Then we can define a natural transformation $colim _{(\int M\varphi )^{op}} \widetilde{\mathcal{C}}(\varphi x_i,-)\Rightarrow M$ induced by the natural transformations $\widetilde{\mathcal{C}}(\varphi x_i,-)\Rightarrow M$ which map $1_{\varphi x_i}$ to $a_i\in M(\varphi x_i)$ at the object $(x_i,a_i\in M\varphi (x_i))$ of the indexing category. We would like to prove that this is an isomorphism.

To see that it is pointwise mono assume that we are given $f,g:\varphi x\to y$ with $Mf(a)=Mg(a)$ for some $a\in M\varphi x$. We should see that there is $h:z\to x$ and $c\in M\varphi z$ such that $M\varphi h(c)=a$ and $f\circ \varphi h =g\circ \varphi h$ (this is sufficient as $(\int M\varphi )^{op}$ is filtered). As $\varphi $ is full wrt.~regular subobjects the equalizer of $f$ and $g$ is of the form $\varphi h$ (and we can take $c=a$).

To see that it is pointwise epi pick $a\in M(y)$ and a jointly covering family $(h_i:\varphi x_i \to y)_{i<\lambda <\kappa }$. As $M$ is $\kappa $-geometric $(Mh_i)_{i<\lambda }$ is jointly covering so there is $i<\lambda$ and $b\in M\varphi x_i$ with $Mh_i(b)=a$. Hence $a$ is in the image of the $y$-component of $\widetilde{\mathcal{C}}(\varphi x_i,-)\Rightarrow M$ which map $1_{\varphi x_i}$ to $b$.

Finally, as $\varphi $ is bijective on objects (conservative and full wrt.~subobjects) the induced map between the colimits
\[
\adjustbox{width=\textwidth}{
\begin{tikzcd}
	&& M &&& {Lan_YSub_{\mathcal{C}}M} \\
	{\mathcal{C}(x,-)} & {\mathcal{C}(x',-)} && {Sub_{\mathcal{C}}(x)} & {Sub_{\mathcal{C}}(x')} \\
	&& {Lan_{\varphi }M} &&& {Lan_YSub_{\mathcal{C}}(Lan_{\varphi }M)} \\
	{\mathcal{C}(\varphi x,-)} & {\mathcal{C}(\varphi x',-)} && {Sub_{\mathcal{C}}(\varphi x)} & {Sub_{\mathcal{C}}(\varphi x')}
	\arrow["{f^*}", from=2-1, to=2-2]
	\arrow[curve={height=-12pt}, from=2-1, to=1-3]
	\arrow[from=2-2, to=1-3]
	\arrow["{\varphi f^*}", from=4-1, to=4-2]
	\arrow[curve={height=-12pt}, from=4-1, to=3-3]
	\arrow[from=4-2, to=3-3]
	\arrow["{f^*}", from=2-4, to=2-5]
	\arrow[curve={height=-12pt}, from=2-4, to=1-6]
	\arrow[from=2-5, to=1-6]
	\arrow["{\varphi f^*}", from=4-4, to=4-5]
	\arrow["\varphi"'{pos=0.4}, from=2-4, to=4-4]
	\arrow["\varphi"'{pos=0.4}, from=2-5, to=4-5]
	\arrow[curve={height=-12pt}, from=4-4, to=3-6]
	\arrow[from=4-5, to=3-6]
	\arrow["{\Psi _M}"', dashed, from=1-6, to=3-6]
\end{tikzcd}
}
\]
is an isomorphism.
\end{proof} 

\begin{theorem}
\label{lanfinpr}
Given a coherent category $\mathcal{C}$ the functor $Lan_YSub_{\mathcal{C}}$ preserves finite products of coherent functors.
\end{theorem}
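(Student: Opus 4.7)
My plan is to construct the natural comparison map $\Phi \colon L(M\times N)\to LM\times LN$ induced by the product projections $M\times N\Rightarrow M$ and $M\times N\Rightarrow N$, and to show it is an isomorphism in $\mathbf{DLat}$. Using the explicit description of Proposition~\ref{lmcomp}, $\Phi$ sends the class $[u\hookrightarrow z^{(\alpha,\beta)}]$ (with $\alpha\in Mz$, $\beta\in Nz$) to the pair $([u\hookrightarrow z^{\alpha}],\,[u\hookrightarrow z^{\beta}])$; that $\Phi$ is a lattice homomorphism follows from the 2-functoriality of $L$ (Definition~\ref{ldef}) applied to the two projections.

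For injectivity, I would exploit unions of witnesses. If $\Phi$ identifies $[u\hookrightarrow x^{(a,b)}]$ and $[v\hookrightarrow y^{(c,d)}]$, then the component equalities in $LM$ and $LN$ supply subobjects $\varphi_{M},\varphi_{N}\hookrightarrow x\times y$ as in Proposition~\ref{lmcomp}. I would then take $\varphi:=\varphi_{M}\cup\varphi_{N}$ as a single witness in $L(M\times N)$: by monotonicity $(a,c)\in M\varphi_{M}\subseteq M\varphi$ and $(b,d)\in N\varphi_{N}\subseteq N\varphi$, hence $((a,b),(c,d))\in (M\times N)(\varphi)$; and distributing intersection over the union reduces $\varphi\cap(u\times y)=\varphi\cap(x\times v)$ to the separate conditions on $\varphi_{M}$ and $\varphi_{N}$.

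For surjectivity, my plan is to reduce to the case of representable functors and then invoke the pretopos completion. Since $M$ and $N$ are coherent, they are flat and so filtered colimits of representables $\mathcal{C}(x_{i},-)$ and $\mathcal{C}(y_{j},-)$; pointwise products commute with these filtered colimits in $\mathbf{Set}^{\mathcal{C}}$, and $L$ preserves colimits (as a left adjoint, via the adjunction recorded just after Proposition~\ref{lcdef}), while filtered colimits commute with finite products in $\mathbf{DLat}$. Thus it suffices to verify the claim for representables $M=\mathcal{C}(x,-)$, $N=\mathcal{C}(y,-)$. In the pretopos completion $\widetilde{\mathcal{C}}$ of $\mathcal{C}$, the product of these representables is represented by the coproduct $\varphi(x)+\varphi(y)$, and disjointness of finite coproducts in a pretopos gives $Sub_{\widetilde{\mathcal{C}}}(\varphi(x)+\varphi(y))\cong Sub_{\widetilde{\mathcal{C}}}(\varphi(x))\times Sub_{\widetilde{\mathcal{C}}}(\varphi(y))$, which, combined with fullness of $\varphi$ with respect to subobjects, yields the required isomorphism for representables.

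The main obstacle will be descending from $\widetilde{\mathcal{C}}$ back to $\mathcal{C}$, especially since the pointwise product of coherent functors need not itself be coherent. This is where Proposition~\ref{commdiag} is essential: the canonical embedding $\varphi\colon\mathcal{C}\to\widetilde{\mathcal{C}}$ is conservative, fully faithful, full with respect to subobjects, and $\aleph_{0}$-covers its codomain, so it intertwines $L_{\mathcal{C}}$ with $L_{\widetilde{\mathcal{C}}}$ via a natural isomorphism. Transferring the representable argument to $\widetilde{\mathcal{C}}$, combining it with the filtered-colimit reduction, and pulling the resulting isomorphism back along $\varphi^{*}$ yields the desired identification $L(M\times N)\cong LM\times LN$ and completes the proof.
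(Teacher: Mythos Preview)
Your surjectivity plan is essentially the paper's proof: both pass to the pretopos completion $\widetilde{\mathcal{C}}$ via Proposition~\ref{commdiag}, and both exploit that in a pretopos the product of representables is representable (by the coproduct) and $Sub_{\widetilde{\mathcal{C}}}(\varphi x+\varphi y)\cong Sub_{\widetilde{\mathcal{C}}}(\varphi x)\times Sub_{\widetilde{\mathcal{C}}}(\varphi y)$. The paper packages this last step as ``$\overline{Sub_{\widetilde{\mathcal{C}}}}$ sends finite coproducts to products, and finite products form a sound doctrine, hence $Lan_Y\overline{Sub_{\widetilde{\mathcal{C}}}}$ preserves finite products'' and then reflects along the forgetful $U:\mathbf{DLat}\to\mathbf{Set}$; you instead do the filtered-colimit reduction to representables by hand. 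Both routes are correct; yours is more elementary, the paper's more conceptual.

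Two remarks. First, your pretopos argument already yields an \emph{isomorphism}, not merely surjectivity, so your direct injectivity argument via $\varphi:=\varphi_M\cup\varphi_N$ is redundant (though it is correct and pleasant). Second, there is one step you use implicitly but do not state: to identify $Lan_\varphi\bigl(\mathcal{C}(x,-)\times\mathcal{C}(y,-)\bigr)$ with $\widetilde{\mathcal{C}}(\varphi x,-)\times\widetilde{\mathcal{C}}(\varphi y,-)$ you need that $Lan_\varphi$ preserves finite products. The paper invokes this explicitly (Example~A4.1.10 of \cite{elephant}); you should too, as it is not automatic for a left Kan extension.
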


\begin{proof}
Let $\varphi :\mathcal{C}\to \widetilde{\mathcal{C}}$ be the pretopos completion of the coherent category $\mathcal{C}$. Then $\varphi $ satisfies the conditions of Proposition \ref{commdiag} we have the above 2-cells. Since $\varphi ^*$ is an equivalence (hence essentially surjective), $Lan_{\varphi }$ preserves finite products by Example A4.1.10.~of \cite{elephant}, and $\Psi $ is iso, it is enough to prove that $Lan_YSub_{\widetilde{\mathcal{C}}}$ preserves finite products of $\kappa $-geometric functors.

As $\widetilde{\mathcal{C}}$ is a pretopos it has finite disjoint coproducts, and $Sub_{\widetilde{\mathcal{C}}}:\widetilde{\mathcal{C}}^{op}\to \mathbf{DLat}$ maps them to products. Then post-composing with the forgetful functor $\overline{Sub_{\widetilde{\mathcal{C}}}}:\widetilde{\mathcal{C}}^{op}\to \mathbf{Set}$ yields a finite product preserving functor whose left Kan-extension along the Yoneda-embedding $ Lan _Y \overline{Sub_{\widetilde{\mathcal{C}}}}$ is also finite product preserving as finite products form a sound doctrine in the sense of \cite{classification}. But if $M:\widetilde{\mathcal{C}}\to \mathbf{Set}$ is a lex functor then $Lan _Y \overline{Sub_{\widetilde{\mathcal{C}}}}(M)$ is the underlying set of $Lan _Y Sub_{\widetilde{\mathcal{C}}}(M)$ as $(\int M)^{op}$ is filtered and the forgetful functor $U:\mathbf{DLat}\to \mathbf{Set}$ preserves filtered colimits. As it reflects all limits it follows that $Lan_YSub_{\widetilde{\mathcal{C}}}$ preserves finite products of lex functors.
\end{proof}

\begin{question}
Can we find conditions on $\mathcal{C}$ which guarantee that $Lan_YSub_{\mathcal{C}}$ preserves $<\kappa $-products of coherent functors? As it preserves filtered colimits in this case $L_{\mathcal{C}}$ would preserve $<\kappa $-ultraproducts. We have seen in Example \ref{noultrapr} that this is not the case in general.
\end{question}

\printbibliography

\end{document}